\newtheorem{thm}{Theorem}
\newtheorem{prop}[thm]{Proposition}
\newtheorem{lemma}[thm]{Lemma}
\newtheorem{cor}[thm]{Corollary}
\numberwithin{thm}{section}
\newtheorem*{thm_one}{Theorem 1}
\newtheorem*{thm_two}{Theorem 2}
\newtheorem*{thm_three}{Theorem 3}
\newtheorem*{main_thm_part_one}{Main Theorem (Part I)}
\newtheorem*{thm_five}{Theorem 5}
\newtheorem*{thm_six}{Theorem 6}
\newtheorem*{conj}{Conjecture}
\newtheorem*{thm_seven}{Theorem 7}
\newtheorem*{thm_eight}{Theorem 8}
\newtheorem*{cor_nine}{Corollary 9}
\newtheorem*{main_thm_part_two}{Main Theorem (Part II)}
\newtheorem*{rem}{Helton-Vinnikov Theorem}
\newcommand{\shrink}[1]{{\scriptstyle {\textstyle {#1}}}}
\newcommand{\smfrac}[2]{\shrink{\frac{#1}{#2}}}
\newcommand{\reals }{\mathbb{R} }
\newcommand{\ppi}[1]{p^{({#1})}}
\newcommand{\ppie}[2]{p^{({#1})}_{{#2}}}
\newcommand{\qie}[2]{q^{({#1})}_{{#2}}}
\newcommand{\plus}{{\scriptscriptstyle +}}
\newcommand{\Lambdap}{\Lambda_{\plus}}
\newcommand{\Lambdapp}{\Lambda_{\plus \plus}} 
\newcommand{\Lambdape}[1]{\Lambda_{\plus,{#1}}}
\newcommand{\Lambdappe}[1]{\Lambda_{\plus \plus,{#1}}}
\newcommand{\Lambdaiep}[2]{\Lambda^{({#1})}_{\plus,{#2}}}
\newcommand{\Lambdaiepp}[2]{\Lambda^{({#1})}_{\plus \plus,{#2}}}
\newcommand{\hp}{\mathrm{HP}} 
\newcommand{\hpie}[2]{\hp_{{#2}}^{({#1})}} 
\newcommand{\opt}{\mathrm{Opt} }
\newcommand{\optie}[2]{\mathrm{Opt}_{{#2}}^{({#1})}}
\newcommand{\feas}{\mathrm{Feas} } 
\newcommand{\feasie}[2]{\mathrm{Feas}^{({#1})}_{{#2}} } 
\newcommand{\val}{\mathrm{val} } 
\newcommand{\xie}[2]{x_{{#2}}^{({#1})} } 
\newcommand{\sie}[2]{s_{{#2}}^{({#1})} } 
\newcommand{\yie}[2]{y_{{#2}}^{({#1})} } 
\newcommand{\swath}{\mathrm{Swath}} 
\newcommand{\core}{\mathrm{Core}} 
\renewcommand{\int}{\mathrm{int}}
\newcommand{\relint}{\mathrm{relint}} 
\newcommand{\mult}{\mathrm{mult} }
\newcommand{\mie}[2]{\mathrm{M}_{{#2}}^{({#1})}} 
\newcommand{\nie}[2]{\mathrm{N}_{{#2}}^{({#1})}}
\newcommand{\ray}{\mathrm{ray}} 
\newcommand{\sep}{ \vspace{1mm} \begin{center} \ding{70} \ding{70} \ding{70} \end{center} \vspace{1mm}  }
\newcommand{\cpath}{\mathrm{Central \, Path}} 
\newcommand{\ecpath}{\mathrm{ECentral \,  Path}}
\begin{document}

\title[Central Swaths]{\large{Central Swaths} \\ {\small (A Generalization of the Central Path)}}
\author[J. Renegar]{James Renegar}
\address{School of Operations Research and Information Engineering,
 Cornell University, Ithaca, NY, U.S.}

 \thanks{Research supported in part by NSF Grant \#CCF-0430672.}
\thanks{Thanks to Chek Beng Chua and Yuriy Zinchenko for many helpful conversations.  Deep gratitude also goes to the referees, whose extensive comments led to the paper being significantly restructured, and led to the (motivational) exposition being considerably expanded especially in the proofs.}
 \keywords{hyperbolicity cone, hyperbolic polynomial, hyperbolic programming, central path, conic programming, convex optimization}
 \subjclass{90C05, 90C22, 90C25, 52A41, 52B15}

 \begin{abstract} 
We develop a natural generalization to the notion of the central path -- a notion that lies at the heart of interior-point methods for convex optimization.  The generalization is accomplished via the ``derivative cones'' of a ``hyperbolicity cone,'' the derivatives being direct and mathematically-appealing relaxations of the underlying (hyperbolic) conic constraint, be it the non-negative orthant, the cone of positive semidefinite matrices, or other.  

We prove that a dynamics inherent to the derivative cones generates paths always leading to optimality, the central path arising from a special case in which the derivative cones are quadratic.  Derivative cones of higher degree better fit the underlying conic constraint, raising the prospect that the paths they generate lead to optimality quicker than the central path. 
\end{abstract}

\maketitle  

\numberwithin{equation}{section}

\section{{\bf  Introduction}} \label{s.a}

Let $ {\mathcal E} $ denote a finite-dimensional Euclidean space and let  $ p: {\mathcal E}  \rightarrow \reals $ be a hyperbolic polynomial, that is, a homogeneous polynomial for which there is a designated direction vector $ e $ satisfying $ p(e) > 0 $ and having the property that for all $ x \in {\mathcal E}   $, the univariate polynomial $ t \mapsto p(x+te) $ has only real roots.  Thus, $ p $ is ``hyperbolic in direction $ e $.''

Let $ \Lambdapp $ denote the hyperbolicity cone  -- the connected component of $ \{ x: p(x) > 0 \} $ containing $ e $.  Let $ \Lambdap $ be the closure.

A simple example is $ p(x) = \prod_j x_j $ and $ e = (1,\ldots,1) $, in which case $ \Lambdapp $ is the strictly positive orthant and $ \Lambdap $ is the non-negative orthant.  Perhaps the most fundamental example, however, is $ p(X) = \det(X) $, where $ X $ ranges over $ n \times n $  symmetric matrices and $ e = I $, the identity matrix.  Here, $ \Lambdapp $ is the cone of (strictly-)positive definite (pd) matrices and $ \Lambdap $ is the positive semidefinite (psd) cone. 

G\r{a}rding \cite{garding}  showed for each hyperbolic polynomial $ p $ that every $ \hat{e} \in \Lambdapp $ is a hyperbolicity direction, i.e., for every $ x $, all of the roots of $ t \mapsto p(x+t \hat{e}  ) $ are real.      One of several remarkable corollaries G\r{a}rding established is that $ \Lambdapp $ is convex. (Of course $ \Lambdap $ is thus convex, too.) (See \S2 of \cite{renegar}   for simplified proofs.) 

The combination of convexity and rich algebraic structure make hyperbolicity cones promising objects for study in the context of optimization, as was first made evident by G\"{u}ler \cite{guler}, who developed a rich theory of interior-point methods for hyperbolic programs, that is, for problems of the form
\[ \begin{array}{rl}
\min & c^*x \\
\textrm{s.t.} & Ax = b \\
& x \in \Lambdap  \end{array} \]
-- linear programming, second-order programming and semidefinite programming being particular cases.  Key to G\"{u}ler's development is that the function $ x~\mapsto~-~\ln~p(x) $ is a self-concordant barrier for $ \Lambdap $; thus the general theory of Nesterov and Nemirovski \cite{nn} applies. 

A primary purpose of the present paper is to use the viewpoints provided by hyperbolic programming to develop a natural generalization to the notion of the central path\footnote{Central Path :=  $ \{ x( \eta ) : \eta > 0 \} $ where $ x(\eta ) $ solves $ \min_x \eta c^* x - \ln p(x) $, s.t. $ Ax = b $, $ x \in \Lambdapp $.} (a notion that  lies at the heart of interior-point method theory).  This is accomplished via derivative cones, which are direct relaxations of the underlying convex conic constraint (be it the non-negative orthant, the cone of positive semidefinite matrices, \ldots).

However, perhaps more important than the ``natural generalization to the notion of the central path'' is our ``use (of) the viewpoints provided by hyperbolic programming'' in developing the generalization.  Indeed, it is our conviction that even if results about hyperbolic programming never find application more general than linear programming and semidefinite programming, the setting of hyperbolic programming is favorable for engendering intriguing algorithmic ideas that otherwise would have been unrealized (or at least considerably delayed).  

Familiarity with the central path is not required to readily understand our results.  (The central path simply provides an initial anchor with which many readers {\em are} familiar.)

The literature focusing on hyperbolic polynomials is relatively small but its growth is accelerating and its quality in general is distinctly impressive.  Although the nature of our results is such that during the development we have occasion to cite only a few works, we take the opportunity before beginning the development to draw the reader's attention to the bibliography, which includes a variety of notable papers appearing in recent years.  In particular, an appreciation of the breadth and quality of research ideas surrounding hyperbolic polynomials can be fostered by browsing \cite{bgls}, \cite{bb}, \cite{gurvits}, \cite{hl}, and \cite{hv}.

\section{{\bf  Overview of Results}} \label{s.b}

Let $ \phi $ be a univariate polynomial all of whose coefficients are real. Between any two real roots of $ \phi $ there lies, of course, a root of $ \phi' $.  Consequently, because $ \phi' $ is of degree one less than the degree of $ \phi $, a simple counting argument shows that if all of the roots of $ \phi $ are real, then so are all of the roots of $ \phi' $.

In particular, if $ \phi(t) := p(x+te) $ where $ p $ is a polynomial hyperbolic in direction $ e $ (and where $ x $ is an arbitrary point), then all the roots of $ t \mapsto \phi'(t) = \smfrac{d}{dt} p(x+te) = Dp(x+te)[e] $ are real, where $ Dp(x+te) $ denotes the differential of $ p $ at $ x + te $.  Hence, the polynomial $ p'_e(x) := Dp(x)[e]  $ is, like $ p $, hyperbolic in direction $ e $.  

For example, if $ p(x) = \prod_j x_j $ and all coordinates of $ e $ are nonzero, then $ p_e'(x) = \sum_i e_i \prod_{j \neq i} x_j $ is hyperbolic in direction $ e $.

We refer to $ p'_e $ as the ``derivative polynomial (in direction $ e $),'' and denote its hyperbolicity cone by $ \Lambdappe{e}' $.  The fact that for every $ x $, the roots of $ t \mapsto  p_e'(x+te)  $ lie between the roots of $ t \mapsto p(x+te) $ is readily seen to imply  $ \Lambdapp \subseteq \Lambdappe{e}' $ -- in words, $ \Lambdappe{e}' $ is a {\em relaxation} of $ \Lambdapp $ (see \S4 of \cite{renegar} for a full discussion).  

Of course one can in turn take the derivative in direction $ e $ of the hyperbolic polynomial $ p'_e $, thereby obtaining yet another polynomial -- $ (p'_e)'_e(x) = D^2p(x)[e,e] $ -- hyperbolic in direction $ e $. Letting $ n $ denote the degree of $ p $, repeated differentiation in direction $ e $ results in a sequence of hyperbolic polynomials 
\[  \ppie{1}{e} = p_e', \, \ppie{2}{e}, \ldots, \ppie{n-1}{e} \; , \]
  where $ \deg( \ppie{i}{e}) = n-i $.  (For convenience, let $ \ppie{0}{e} := p $.) The associated hyperbolicity cones $ \Lambdaiepp{i}{e} $ -- and their closures $ \Lambdaiep{i}{e} $ -- form a nested sequence of relaxations of the original cone:
\[  \Lambdap = \Lambdaiep{0}{e}  \subseteq \Lambdaiep{1}{e} \subseteq \Lambdaiep{2}{e} \subseteq \cdots \subseteq \Lambdaiep{n-1}{e} \; . \]
The final relaxation, $ \Lambdaiep{n-1}{e} $, is a halfspace, because $ \ppie{n-1}{e} $ is linear.

The cones become tamer as additional derivatives are taken.  The halfspace $ \Lambdaiep{n-1}{e} $ is as tame as a cone can be, but extremely tame also is the second-order cone $ \Lambdaiep{n-2}{e} $ -- no cone with curvature could be nicer.  As one moves along the nesting towards the original cone $ \Lambdap $, the boundaries gain more and more corners.  For example, when $ p(x) = \prod_j x_j $ and all coordinates of $ e $ are positive, the boundary $ \partial \Lambdaiep{i}{e} $ contains all of the non-negative orthant's faces of dimension less than $ n-i $ (hence {\em lots} of corners when $ i $ is small and $ n $ large).  On the other hand, everywhere else, $ \partial \Lambdaiep{i}{e} $ has nice curvature properties (no corners), as is reflected in the following motivational theorem pertaining to every hyperbolicity cone whose closure is regular (i.e., has nonempty interior and contains no subspace other than the origin). 

\hypertarget{targ_thm_one}{}
\begin{thm_one}  
Assume $ \Lambdap $ is regular and $ 0 \leq i \leq n-2 $.
\begin{enumerate}

\item  The intersection $ \Lambdap \cap \partial \Lambdaiep{i}{e} $ is independent of $ e \in \Lambdapp $ \\
(thus, a face of $ \Lambdap $ which is a boundary face of $ \Lambdaiep{i}{e} $ for some $ e \in \Lambdapp $ is a boundary face for all $ e \in \Lambdapp $).  

\item If $ e \in \Lambdapp $ then any boundary face of $ \Lambdaiep{i}{e} $ either is a face of $ \Lambdap $ \\ or is a single ray  contained in $ \Lambdaiepp{i+1}{e} \setminus \Lambdaiep{i-1}{e} $. 
\end{enumerate} 
\end{thm_one}

We show in \S\ref{s.d}  that the theorem is a consequence of results from \cite{renegar}.  (In order to make the present section inviting to a broad audience, nearly all proofs are delayed.)

Before moving to discussion of hyperbolic programs, we record a characterization of the derivative cones that is useful both conceptually and in proofs: 
\begin{equation}  \label{e.a.a}
 \Lambdaiep{i}{e} = \{ x:\ppie{j}{e}(x) \geq  0 \textrm{ for all $j = i, \ldots, n-1 $} \} \; . 
\end{equation}
(This is immediate from Proposition 18 and Theorem 20 of \cite{renegar}.)

\sep

Consider a hyperbolic program
\[  
 \left. \begin{array}{rl}
\min & c^*x \\
\mathrm{s.t.} & Ax = b \\
& x \in \Lambdap \end{array} \quad \right\} \, \hp
\]
and its derivative relaxations in direction $ e $,
\[  
 \left. \begin{array}{rl}
\min & c^*x \\
\mathrm{s.t.} & Ax = b \\
& x \in \Lambdaiep{i}{e} \end{array} \quad \right\} \, \hpie{i}{e} \quad \textrm{($ i=1, \ldots, n-1 $)} \; .  
\] 
(Strictly speaking, ``$\min$'' should be replaced with ``$\inf$,'' but we focus on instances where a minimizer exists.)  The optimal values for the derivative relaxations $ \hpie{i}{e} $ form a decreasing sequence in $ i $, due to the nesting of the derivative cones.

Let $ \feas $ (resp., $ \feasie{i}{e} $) denote the feasible region of $ \hp $ (resp., $ \hpie{i}{e} $) -- the set of points satisfying the constraints.  Let $ \opt $ ($ \optie{i}{e} $) denote the set of optimal points -- a.k.a. optimal solutions -- and let $ \val $ denote the optimal value of $ \hp $.

\underline{We assume $ b \neq 0 $} (thus, the origin is infeasible, and the feasible sets are not cones), \underline{$ A $ is surjective} (i.e., onto),  \underline{and $ c^* $ is not in the image of $ A^* $} (otherwise every feasible point would be optimal).

\underline{We assume $ \Lambdap $ is a regular cone}.  Then, for $ 1 \leq i \leq n-2 $, $ \Lambdaiep{i}{e} $ also is regular (\cite{renegar}, Proposition 13).

From these assumptions and Theorem \hyperlink{targ_thm_one}{1}(B) immediately follows a fact that will play a critical role:
\begin{equation}  \label{e.a.b}
\left. \begin{array}{l}
 \textrm{If $ 1 \leq i \leq n-2 $, then either} \\
$ \textrm{ ~} $ \quad   \textrm{$ \optie{i}{e} = \emptyset  $ ,} \\
$ \textrm{ ~} $ \quad \textrm{$ \optie{i}{e} = \opt $ , or} \\ $ \textrm{ ~} $ \quad \textrm{$ \optie{i}{e} $ consists of a single point} \\
$ \textrm{ ~} $ \qquad \quad \textrm{and the point is contained in $  \relint(\feasie{i+1}{e}) \setminus  \feasie{i-1}{e} $ ,} \end{array} \quad \right\} \end{equation}
where ``$ \relint $'' denotes relative interior\footnote{The ``relative interior'' of a convex set $ S \subseteq {\mathcal E} $ is the interior of $ S  $ when considered as a subset in the smallest affine space containing $ S $ (where the affine space inherits the topology of the Euclidean space $ {\mathcal E}$).}.

Thus, for each $ 1 \leq i \leq n-2 $, the cone $ \Lambdapp $ is naturally partitioned into three regions, one consisting of derivative directions $ e $ for which $ \optie{i}{e} = \opt $, a second consisting of directions  for which $ \optie{i}{e} = \emptyset $, and the third consisting of directions for which $ \optie{i}{e} $ consists of a single point lying outside the feasible region for the original optimization problem $ \hp $.  We associate names with this partitioning of $ \Lambdapp $, but before doing so, we introduce a restriction.

We shall only be concerned with derivative directions $ e $ satisfying $ Ae = b $ (indeed, key arguments rely heavily on $ A(e-x) = 0 $ for $ x \in \feasie{i}{e} $).  Thus, the derivative directions we consider satisfy $ e \in \feasie{i}{e} $ for all $ i $ -- in particular, $ \hpie{i}{e} $ is ``strictly'' feasible, as $ e \in \Lambdapp \subseteq \Lambdaiepp{i}{e} $.  

We distinguish two sets of derivative directions for $ 0 \leq i \leq n-1 $:
\begin{quote}
The {\em $ i^{\mathrm{th}} $ central swath} is the set
\[     \swath(i) :=  \{ e \in \Lambdapp:  Ae = b \textrm{ and }  \optie{i}{e} \neq \emptyset \} \; , \]   
and the set of {\em core} derivative directions is defined by 
\[  \core(i) := \{ e \in \swath(i): \optie{i}{e} = \opt \} \; . \]
When $ 1 \leq i \leq n-2 $ and $ e \in \swath(i) \setminus \core(i) $, we use $ \xie{i}{e} $ to denote the unique point in $ \optie{i}{e} $ (unique by (\ref{e.a.b})).
\end{quote}
For reference, we note that from (\ref{e.a.b}),
\begin{equation}  \label{e.a.c}
 \xie{i}{e} \in  \Lambdaiepp{i+1}{e} \cap \partial \Lambdaiep{i}{e} \; . 
\end{equation}

Trivially, if $ \opt \neq \emptyset $ (resp., $ = \emptyset $), then $ \swath(0) = \core(0) = \relint(\feas) $ (resp., $ = \emptyset $), that is, the zeroth swath coincides precisely with the relative interior of $ \hp $'s feasible region (resp., is the empty set).  More interestingly, the swaths and cores are nested:
\begin{gather*} 
    \swath(0) \supseteq \swath(1) \supseteq \cdots \supseteq \swath(n-1) \\ \label{}
   \core(0) \supseteq \core(1) \supseteq \cdots \supseteq \core(n-1)
\end{gather*}
For the cores, the nesting is an easy consequence of the reverse nesting 
\[  \feasie{0}{e} \subseteq \feasie{1}{e} \subseteq \cdots \subseteq \feasie{n-1}{e} \; . \]
These reverse nesting also provide the crux in proving the nesting of the swaths, a proof we defer to \S\ref{s.d}. 

A consequence of the nesting of swaths is that if any swath is nonempty, then $ \swath(0) \neq  \emptyset  $ -- equivalently, $ \opt \neq \emptyset $.

Whereas a path is narrow (one-dimensional), swaths can be broad, just as $ \swath(i) $ typically fills much of the feasible region for $ \hp $ when $ i $ is small.  But why do we use the terminology ``{\em central} swaths'' rather than simply ``swaths''?  The following elementary theorem (proven in \S\ref{s.d}) gives our first reason.

\hypertarget{targ_thm_two}{}
\begin{thm_two}   \quad  $  \swath(n-1) = \mathrm{Central \, Path} \; .  $ 
\end{thm_two}

The central path is fundamental in the literature on interior-point methods.  The path leads to optimality.  Most of the algorithms follow the path, either explicitly or implicitly.   A foremost goal of the present paper is to show that not only does the central path lead to optimality, but {\em all} central swaths lead, in a natural manner, to optimality.  We show, in particular, that through each point $ e \in \swath(i) $ (for $ 1 \leq i \leq n-2 $), there is naturally defined a trajectory which leads from $ e $ to optimality; moreover, the trajectory remains within $ \swath(i) $ until optimality is reached. An intriguing possibility is that 
for small values of $ i $, the trajectory might lead to optimality ``more quickly'' than the central path.  (Motivation for this possibility will become clearer as the reader proceeds.) 

For $ 1 \leq i \leq n-2 $, consider the idealized setting in which for derivative directions $ e \in \swath(i) $, an exact optimal solution for $ \hpie{i}{e} $ can be computed.  If the optimal solution lies in $ \Lambdap $, then clearly it lies in $ \opt $, the set of optimal solutions for the original optimization problem $ \hp $.  In this case our goal of solving $ \hp $ has been accomplished.  On the other hand, if the optimal solution does not lie in $ \Lambdap $, then $ e \in \swath(i) \setminus \core(i) $, and the optimal solution is the unique point $ \xie{i}{e} $ in $ \optie{i}{e} $.  In this case how can we move towards solving $ \hp $?  How can we construct a trajectory $ t \mapsto e(t) $ for which $ e(0) = e $ and such that either the trajectory converges to $ \opt $ or the path $ t \mapsto \xie{i}{e(t)} $ converges to $ \opt $ (or both)? 

An apparently easier task would be to create a trajectory $ t \mapsto e(t) $ for which $ t \mapsto c^* e(t) $ is monotonically decreasing.  Indeed, we could define the trajectory implicitly according to the differential equation $ \smfrac{d}{dt} e(t) = \xie{i}{e(t)} - e(t) $ ($ e(0) \in \swath(i) \setminus \core(i) $) that is, move from $ e(t) $ infinitesimally towards the optimal solution $ \xie{i}{e(t)} $.  Assuming this does result in a well-defined trajectory, then clearly, $ t \mapsto c^* e(t) $ is decreasing.  However, there are no clear reasons suggesting that the trajectory $ t \mapsto e(t) $ converges to $ \opt $.  It is conceivable, for example, that the trajectory reaches the boundary $ \partial \Lambdap $ in finite time, converging to a point having better objective value than $ e(0) $, but not to a point in $ \opt $.  Alternatively, in finite time the trajectory might reach $ \core(i) $.  It seems plausible that the path $ t \mapsto \xie{i}{e(t)} $ then would have limit in $ \opt $.  But how would one prove it?  How does one even rule out the possibility that in finite time, the path $ t \mapsto \xie{i}{e(t)} $ goes to infinity while the trajectory $ t \mapsto e(t) $ remains bounded but with no limit points in $ \opt $?  

Resolving these issues, and similar ones, is our primary focus.  We show that the differential equation $ \smfrac{d}{dt} e(t) = \xie{i}{e(t)} - e(t) $, $ e(0) \in \swath(i) \setminus \core(i) $, does result in well-defined trajectories in $ \swath(i) \setminus \core(i) $, and we show that either the trajectory $ t \mapsto e(t) $ or the path $ t \mapsto \xie{i}{e(t)} $ does converge to $ \opt $.  We show many other things as well, but to accurately explain, first we must formalize. 

In place of $ \smfrac{d}{dt} e(t) = \xie{i}{e(t)} - e(t) $ we often write $ \dot{e}(t) = \xie{i}{e(t)} - e(t) $.  That this dynamics results in well-defined trajectories is immediate from the following theorem, whose (relatively routine) proof is in \S\ref{s.e}.

\hypertarget{targ_thm_three}{}
\begin{thm_three}   
Assume $ 1 \leq i \leq n-2 $.
The set $ \swath(i) \setminus \core(i) $ is open in the relative topology of $ \relint(\feas) $.
Moreover, the map $ e \mapsto \xie{i}{e} $ is analytic on $ \swath(i) \setminus \core(i) $.
\end{thm_three}

As an aside, we note that every $ e \in \swath(n-2) $ has a unique optimal solution, simply due to the strict curvature of second-order cones.  Thus, we can naturally extend the definition of $ \xie{n-2}{e} $ to include all derivative directions in $ \swath(n-2) $, not just the ones in $ \swath(n-2) \setminus \core(n-2) $.  It happens that for the case $ i = n-2 $, virtually all of our results remain valid when ``$ \swath(n-2) $'' is substituted for ``$ \swath(n-2) \setminus \core(n-2) $'' (as we discuss while proving our theorems (\S\S\ref{s.c}-\ref{s.j})). With regards to Theorem \hyperlink{targ_thm_three}{3} in particular, the extended map $ e \mapsto \xie{n-2}{e} $ is analytic on all of $ \swath(n-2) $ (which is open in the relative topology of $ \relint(\feas) $).

Theorem \hyperlink{targ_thm_three}{3} implies that when initiated at $ e(0) \in \swath(i) \setminus \core(i)  $, the dynamics $ \dot{e}(t) = \xie{i}{e(t)} - e(t) $ results in a well-defined trajectory.  The trajectory remains in $ \swath(i) \setminus \core(i) $ for all time (i.e., is defined for all $ 0 \leq t < \infty $), or is defined only up to some finite time due either to reaching the boundary of $ \swath(i) \setminus \core(i) $ or escaping to infinity.  Let $ T(e(0)) $ denote the time at which the trajectory becomes undefined (possibly $ T(e(0)) = \infty $).  We refer to $ t \mapsto e(t) $ ($ 0 \leq t < T(e(0)) $) as a ``maximal trajectory.'' For brevity, we often instead write ``$ 0 \leq t < T$'' with the implicit understanding that the time of termination, $ T $, depends on $ e(0) $.  (Some of our results distinguish between $ T < \infty $ and $ T = \infty $, but never is distinction made between different finite termination times.)

Here is the formal statement of results partially described earlier:
\hypertarget{targ_main_thm_part_one}{}
\begin{main_thm_part_one} 
Assume $ 1 \leq i \leq n-2 $ and let $ t \mapsto e(t) $ $ (0 \leq t < T) $ be a maximal trajectory generated by the dynamics $ \dot{e}(t) = \xie{i}{e(t)} - e(t) $ beginning at $ e(0)  \in \swath(i) \setminus \core(i) $.
\begin{enumerate}

\item The trajectory $ t \mapsto e(t) $ is bounded, and $ t \mapsto c^* \xie{i}{e(t)}  $ is strictly increasing, with $ \val $ (the optimal value of $ \hp $) as the limit.

\item If $ T = \infty $ then every limit point of the trajectory $ t \mapsto e(t) $ lies in $ \opt $.

\item If $ T < \infty $ then the trajectory $ t \mapsto e(t) $ has a unique limit point $ \bar{e} $ and $ \bar{e} \in \core(i) $; moreover, the path $ t \mapsto \xie{i}{e(t)} $ is bounded and each of its limit points lies in $ \opt $.
\end{enumerate}
\end{main_thm_part_one}    

The Main Theorem (Part I) is proven in \S\ref{s.h}.

An immediate consequence of the theorem is that $ T = \infty $ whenever $ \core(i) = \emptyset $, as is the case whenever $ \opt \cap \Lambdaiepp{i}{e} \neq \emptyset $ for some (and hence, by Theorem~\hyperlink{targ_thm_one}{1}(A), for all) $ e \in \Lambdapp $.

Perhaps the reader wonders as to the inspiration for the idea that the trajectories $ t \mapsto e(t) $ arising from the dynamics $ \dot{e}(t) = \xie{i}{e(t)} - e(t) $ lead to optimality, either in the limits of the trajectories themselves or in the limits of the paths $ t \mapsto \xie{i}{e(t)} $.  The following theorem (whose proof is in \S\ref{s.f}) serves to clarify the inspiration, as well as to further illuminate our choice of the terminology ``{\em central} swaths'' (as opposed to simply ``swaths'').  
   
\hypertarget{targ_thm_five}{}
\begin{thm_five}  
Assume $ t \mapsto e(t) $ $ (0 \leq t < T) $ is a maximal trajectory arising from the dynamics $ \dot{e}(t) = \xie{n-2}{e(t)} - e(t) $, starting at $ e(0) \in \swath(n-2) \setminus \core(n-2) $. \begin{center}  If $ e(0) \in \cpath $ then $  \{ e(t): 0 \leq t < T \} \subseteq \cpath $. \end{center} 
\end{thm_five}

We remark that when $ i = n-2 $, the termination time $ T $ always is $ \infty $, even when $ \core(n-2) \neq \emptyset $  (see \S\ref{s.f}).

From the two theorems, we see that the central path is but one trajectory in a rich spectrum of paths.  Moreover, the central path is at the far end of the spectrum, where the cone $ \Lambdap $ is relaxed to second-order cones $ \Lambdaiep{n-2}{e} $.  Second-order cones have nice curvature properties but are far cruder approximations to $ \Lambdap $ than are cones $ \Lambdaiep{i}{e} $ for small $ i $.  This raises the interesting prospect that algorithms more efficient than interior-point methods can be devised by relying on a smaller value of $ i $, or on a range of values of $ i $ in addition to $ i = n-2 $.  

Some exploration in this vein has been made by Zinchenko (\cite{zinchenko1},\cite{zinchenko3}),  who showed for linear programs satisfying standard non-degeneracy conditions that if $ i $ is chosen appropriately and the initial derivative direction $ e(0) $ is within a certain region, then a particular algorithm based on discretizing the flow $ \dot{e}(t) = \xie{i}{e(t)} - e(t) $ converges R-quadratically to an optimal solution.

Before moving to the next result, we acknowledge that maybe the Main Theorem can be strengthened without restricting its general setting.  For example, in Part I(B) there is no statement that limit points of the path $ t \mapsto \xie{i}{e(t)} $ are optimal solutions for $ \hp $ -- there is not even a statement that the path is bounded.  This omission seems odd given that the trajectory $ t \mapsto e(t) $ is following the path $ t \mapsto \xie{i}{e(t)} $ (according to the dynamics $ \dot{e}(t) = \xie{i}{e(t)} - e(t) $) and given that the theorem states limit points of the trajectory are optimal for $ \hp $.  Intuitively, it seems the path would converge to optimality and do so even more quickly than the trajectory.  The intuition is correct for a wide variety of ``non-degenerate" problems (indeed, quicker convergence of the path than the trajectory underlies Zinchenko's speedup), but we have been unable to find a proof -- or counterexample -- in the general setting of the theorem.\footnote{To gain a sense of the difficulties (and how first impressions can mislead), consider that for any value $ 0 < T \leq \infty  $, it is straightforward to define dynamics on $ \mathbb{R}^{2} $  that generates a pair of paths $ a(t) $, $ b(t) $ for which $ \dot{b}(t) = a(t) - b(t) $ and as $ t \rightarrow T $, $ a(t) $ spirals outward to infinity whereas $ b(t) $ spirals inward to a point. (Thus, although $ a(t) $ is ``leading'' $ b(t) $, the paths end (infinitely) far apart.)}

In a similar vein, maybe it is true when $ T = \infty $ that the trajectory $ t \mapsto e(t) $ has a {\em unique} limit point.  If, like the central path, the trajectory was a semialgebraic set then the limit point indeed would be unique (simply because every semialgebraic path that has a limit point has exactly one limit point).  However, we doubt that the trajectories are necessarily semialgebraic in general, and we see no other approach to proving uniqueness.    The theorem leaves open for the general setting the possibility that when $ T = \infty $ (resp., $ T < \infty $), some trajectories $ t \mapsto e(t) $ (resp., some paths $ t \mapsto \xie{i}{e(t)} $) have non-trivial limit cycles -- yet we have no examples of such behavior.

\sep

Of course the dynamics of moving $ e $ towards an optimal solution $ x $ can also be done for $ e \in \core(i) $, in which case $ e $ would be moving towards $ x \in \optie{i}{e} = \opt $.  As a matter of formalism, it would be nice to know that such movement would result in a new derivative direction for which $ x $ is still optimal, that is, a new derivative direction that also is in $ \core(i) $.   The following theorem (proven in \S\ref{s.j}) establishes a bit more.

\hypertarget{targ_thm_six}{}
\begin{thm_six} 
Assume $ e \in \core(i) $ and let $ {\mathcal A} $ be the minimal affine space containing both $ e $ and $ \opt $.  Then
\[     {\mathcal A} \cap \Lambdapp \subseteq \core(i) \; . \]
\end{thm_six}

In the following conjecture, the empty set is taken, by default, to be convex.
\begin{conj}
$ \core(i) $ is convex. 
\end{conj}
   
\sep

Much work remains in order to transform the ideas captured in the Main Theorem (Part I) into general and efficient algorithms.  For example, devising and analyzing efficient methods for computing $ \xie{i}{e} $ given $ e $ is, in the general case, a challenging research problem.  However, computing $ \xie{n-2}{e} $ (that is, the case $ i = n-2 $)  amounts simply to solving a least-squares problem and using the quadratic formula.  Here, Chua \cite{chua}, starting with -- and extending -- ideas similar to ones above, devised and analyzed an algorithm for semidefinite programming (and, more generally, for symmetric cone programming) with complexity bounds matching the best presently known -- $ O(\sqrt{n} \log (1/\epsilon) ) $ iterations to reduce the duality gap by a factor $ \epsilon $ when $ \Lambdap $ is the cone of $ n \times n $ sdp matrices.

Although in the present work we do not analyze methods for efficiently computing $ \xie{i}{e} $, we now present a few results relevant to algorithm design.  These results also are important to the proof of the Main Theorem.

For the next two theorems, nothing is gained by distinguishing $ \ppie{i}{e} $ from any other hyperbolic polynomial, so we phrase the results simply in terms of a polynomial $ p $ hyperbolic in direction $ e $, and its first derivative $ p_e' $.  (The two theorems, moreover, do not require $ \Lambdap $ to be a regular cone.)

Let 
\[  q_e := p/p'_e \; , \]
  a rational function.  The natural domain for $ q_e $ is $ \Lambdappe{e}' $, because $ p'_e(x) > 0 $ for $ x \in \Lambdappe{e}' $ and $ p'_e(x) = 0 $ for $ x $ in the boundary of $ \Lambdappe{e}' $.

The following result is proven in \S\ref{s.g}.

\hypertarget{targ_thm_seven}{}
\begin{thm_seven}
The function $ q_e: \Lambdappe{e}' \rightarrow \reals $ is concave.
\end{thm_seven}

Previously, $ q_e $ was known to be concave on the smaller cone $ \Lambdapp $ (see \cite{bgls}).  For us, the significance of the function being concave on the larger cone $ \Lambdapp' $ is that, as the following theorem  illustrates,  $ \hp $ can be reformulated as a linearly-constrained convex optimization problem (no explicit conic constraint).    For motivation, think of the situation where one has an approximation to an optimal solution for $ \hp $ and the goal is to compute a better approximation.

\hypertarget{targ_thm_eight}{}
\begin{thm_eight}  \label{}
The optimal solutions $ x \in \Lambdappe{e}' $ for $ \hp $ are the same as for the convex optimization problem
\[ \begin{array}{rl}
\min_{x \in \Lambdappe{e}'} & - \ln c^*( e-x) - q_e(x) \\
\mathrm{s.t.} & Ax = b \; . 
\end{array} \]
\end{thm_eight}
\noindent (A caution: The theorem asserts nothing about optimal solutions for $ \hp $ that happen to lie in the intersection of the boundaries $ \partial \Lambdap $ and $  \partial \Lambdape{e}' $.)  The theorem is proven in \S\ref{s.g}.

Since, by (\ref{e.a.c}) , $ \xie{i}{e} \in \Lambdaiepp{i+1}{e} $ for $ e \in \swath(i) \setminus \core(i) $, the following corollary is immediate, except for the assertion regarding the second differential, which is established in \S\ref{s.g}.

\hypertarget{targ_cor_nine}{}
\begin{cor_nine} 
If $ 1 \leq i \leq n-2 $, $ e \in \swath(i) \setminus \core(i) $ and
\[ f(x) := - \ln c^*(e-x) - \frac{\ppie{i}{e}(x)}{\ppie{i+1}{e}(x)} \; , \]
 then $ \xie{i}{e} $ is the unique optimal solution for the convex optimization problem
\[ \begin{array}{rl}
 \min_x & f(x) \\
 \mathrm{s.t.} & Ax = b \; . 
\end{array} \]
Moreover, $ D^2f(\xie{i}{e})[v,v] > 0 $ if $ v $ is not a scalar multiple of $ \xie{i}{e} $ (in particular, if $ v \neq 0 $ satisfies $ Av = 0 $).  
\end{cor_nine}

A consequence of the assertion regarding the second differential is that Newton's method will converge quadratically to $ \xie{i}{e} $ if initiated nearby. (Which is not to say that Newton's method is the algorithm of choice for this problem.)
   
\sep

Our final result relates the dynamics $ \dot{e}(t) = \xie{i}{e(t)} - e(t) $ to the optimization problem dual to $ \hp $:
  \[  
 \left. \begin{array}{rl}
\sup_{y^*,s^*} & y^*b \\
\mathrm{s.t.} & y^* A + s^* = c^* \\
& s^* \in \Lambdap^* \end{array} \quad \right\} \, \hp^* \; ,  
\] where $ \Lambdap^* $ is the cone dual to $ \Lambdap $.\footnote{The dual cone $ \Lambdap^* $ consists of the linear functionals $ s^* $ satisfying $ s^*x \geq 0 $ for all $ x \in \Lambdap $ .}  A pair $ (y^*,s^*) $ satisfying the constraints is said to be ``strictly'' feasible if $ s^* \in \int(\Lambdap^*) $ (interior).  

Letting $ \val^* $ denote the optimal value of $ \hp^* $ ($ \val^* = - \infty $ if $ \hp^* $ is infeasible), we have, just as a matter of tracing definitions, the standard result known as ``weak duality''\footnote{This is proven simply by observing that for feasible $ x $ and $ (y^*,s^*) $,
\[  c^*x = (y^*A + s^*)x = y^*b + s^*x \geq y^*b \; , \] the inequality due to $ x \in \Lambdap $ and $ s^* \in \Lambdap^* $.}:  $ \val^* \leq \val $.  

Optimizers expect that if a dynamics provides a natural path-following framework for solving a convex optimization problem, then not only do the dynamics generate paths leading to (primal) optimality, also the dynamics somehow generate paths leading to dual optimality.  

For $ e \in \swath(i) \setminus \core(i) $, define
\[  \sie{i}{e} :=  \frac{c^*(e-x) }{\ppie{i+1}{e}(x)} D \ppie{i}{e}(x) \quad \textrm{where } x = \xie{i}{e} \; , \]
and where $ D \ppie{i}{e}(x) $ is the differential of $ \ppie{i}{e} $ at $ x $, i.e., the linear functional defined on vectors $ v $ by $ D \ppie{i}{e}(x)[v] := \smfrac{d}{dt} \ppie{i}{e}(x+tv)|_{t=0} $.
The following result is proven in \S\ref{s.i}.

\hypertarget{targ_main_thm_part_two}{}
\begin{main_thm_part_two}
Assume $ 1 \leq i \leq n-2 $  and let $ t \mapsto e(t) $ $ ( 0 \leq t < T) $ be a maximal trajectory for the dynamics $ \dot{e} = \xie{i}{e} - e $ starting at $ e(0) \in \swath(i) \setminus \core(i) $.  Then $ y^*A + \sie{i}{e(t)} = c^* $ has a unique solution $ y^* = \yie{i}{e(t)} $, and  the pair $ (\yie{i}{e(t)},\sie{i}{e(t)}) $ is strictly feasible for $ \hp^* $.  Moreover, 
\[   \yie{i}{e(t)}b  =  c^* \xie{i}{e(t)} \xrightarrow[t \rightarrow T]{} \val \] 
(in fact, increases to $ \val $ strictly monotonically) and the path $ t \mapsto (\yie{i}{e(t)},\sie{i}{e(t)}) $ is bounded.
\end{main_thm_part_two}

Consequences are, of course,  that $ \val^* = \val $ (``strong duality'') and that the limit points of $ t \mapsto (\yie{i}{e(t)},\sie{i}{e(t)}) $ form a nonempty set, each of whose elements is optimal for $ \hp^* $.

Thus, although the path $ t \mapsto \xie{i}{e(t)} $ is infeasible for $ \hp $, and the feasible trajectory $ t \mapsto e(t) $ can potentially not converge to optimality (it might instead converge to $ \bar{e}   \in \core(i) $), there is naturally generated a path $ t \mapsto (\yie{i}{e(t)},\sie{i}{e(t)}) $ that both is feasible for $ \hp^* $ and converges to optimality. 

Perhaps, then, the algorithmic framework we have posed as being for the primal optimization problem would be better posed as being for the dual, since by interchanging the primal and the dual, there would be a single path generated for the primal, and that path would be both feasible and converge to optimality.

\sep

Lastly, we mention that in defining the sequence of derivative polynomials $ \ppi{i}_e $ ($  i=1, \ldots, n-1  $), we could have used various derivative directions $ e_1, \ldots, e_{n-1} $, choosing $ e_1 $ from the hyperbolicity cone for $ p $ and defining $ p_{e_1}'(x) :=  Dp(x)[e_1] $, choosing $ e_2 $ from the (larger) hyperbolicity cone for $ p_{e_1}' $ and defining $ p_{e_1,e_2}''(x) :=  Dp_{e_1}'(x)[e_2] = D^2p(x)[e_1,e_2] $, and so on.  Several results in the following pages can be extended to this more general setting.  However, computing multidirectional derivatives can be prohibitively expensive, even for the innocuous-appearing hyperbolic polynomials $ p(x) = \prod_{i=1}^n a_i^T x $ naturally arising from polyhedral cones $ \{ x \in \reals^n: a_i^Tx \geq 0 \textrm{ for all }   i=1, \ldots, n \} $; indeed, choosing $ e_1, \ldots, e_n $ to be the standard basis, $ D^np(x)[e_1, \ldots, e_n ] $ is the permanent of the matrix whose $ i^{ \mathrm{th}} $ column is $ a_i $.  

By contrast, if the same direction $ e $ is used for all derivatives ($ i = 1, \ldots, n-1  $), the resulting polynomials $ \ppie{i}{e} $ (resp., their gradients, their Hessians) can be efficiently evaluated at any point if the initial polynomial $ p $ (resp., its gradient, its Hessian) can be efficiently evaluated at any point.  This is straightforwardly accomplished by interpolation, and can be sped up via the (inverse) Discrete Fourier Transform (see \S9 of \cite{renegar}  for some discussion).  As a  primary motivation for the present paper is designing efficient algorithms, it thus is sensible to restrict consideration to the same direction $ e $ being used for all derivatives ($ i = 1, \ldots, n-1 $).

\section{{\bf Prelude to the Analysis}}  \label{s.c} 

Now we turn to proving the results.  The theorems are proven in the order in which they were stated with the exceptions of \hyperlink{targ_main_thm_part_one}{Part I} of the Main Theorem and Theorem~\hyperlink{targ_thm_six}{6}.  The proof of the first is delayed because it depends on theorems that were stated later.  The proof of Theorem 6 is delayed, until the end, due to the combination of the proof being long and the theorem being less important than others. 

To ease burdens on the reader, each theorem is restated before its proof, and is renumbered to match the section in which it is proven (in part so it is clear there is no circularity among the proofs).  Additionally, concepts and definitions are recalled as they first retake center stage, and a few supplementary results are presented.  Thus, the reader is freed from having to refer to the preceding ``overview of results.'' 

Several proofs rely fundamentally on results from \cite{renegar}, a paper on structural aspects of hyperbolicity cones (and hyperbolic programs), a paper for which a primary goal was to provide a ready reference of ``lower-level'' details so that subsequent papers (such as the present one) could avoid drawn-out proofs.  The relevant results are presented in propositions or theorems at the beginning of sections where the results are first needed, with the exception of the following results.  

One result from \cite{renegar} is used time and time again -- the characterization we recorded as (\ref{e.a.a}), that is,
\begin{equation}  \label{e.c.a}
   \Lambdaiep{i}{e} = \{ x:\ppie{j}{e}(x) \geq  0 \textrm{ for all $j = i, \ldots, n-1 $} \} \; . 
\end{equation}
For ease of reference we record the following two characterizations that are similar to the one above:
\begin{equation}  \label{e.c.b}
   \Lambdaiepp{i}{e} = \{ x:\ppie{j}{e}(x) >  0 \textrm{ for all $j = i, \ldots, n-1 $} \}  \; , 
\end{equation} 
\begin{equation}  \label{e.c.c}
   \Lambdaiepp{i}{e} = \{ x:\ppie{i}{e}(x) >  0 \textrm{ and } \ppie{j}{e}(x) \geq 0 \textrm{ for all $j = i+1, \ldots, n-1 $} \} \; . 
\end{equation}
As was noted earlier, (\ref{e.c.a})  is immediate from Proposition 18 and Theorem 20 in \cite{renegar}.  The representations (\ref{e.c.b})  and (\ref{e.c.c})  are established in the two paragraphs following that theorem. 

Readily proven from the above characterizations are that for $ 1 \leq i \leq n-1 $,
\begin{equation}  \label{e.c.d}
 x \in \Lambdaiep{i}{e} \setminus \Lambdaiep{i-1}{e} \quad \Rightarrow \quad \ppie{i-1}{e}(x) < 0 
\end{equation}
and
\begin{equation}  \label{e.c.e}
 x \in \partial \Lambdaiep{i}{e} \quad \Rightarrow \quad \ppie{i-1}{e}(x) \leq 0 
 \; . 
\end{equation}  

On a different note, recall that the $ k^{th} $ differential $ D^k f(x) $ of a function $ f: \mathbb{R}^n \mapsto \mathbb{R}  $ is defined on $ k $-tuples of vectors by
\[      D^k f(x)[v_1, \ldots, v_k] := \smfrac{d}{dt_1} \cdots \smfrac{d}{dt_k} f(x + t_1 v_1 + \cdots t_k v_k)|_{t_k = 0} \cdots |_{t_1 = 0} \; . \]
If $ f $ is analytic -- as are hyperbolic polynomials -- the differential is well-defined for every $ k $, and is symmetric (i.e., for all permutations $ \pi $ of $ \{ 1, \ldots, k \} $, the values $ D^k f(x)[v_{ \pi(1)}, \ldots, v_{\pi(k)}] $ are identical).  Moreover, the differentials are multilinear, that is, $ v_i \mapsto D^kf(x)[v_1, \ldots, v_k] $ is linear when $ v_1, \ldots, v_{i-1}, v_{i+1}, \ldots, v_k $ are fixed.

Fixing vectors $ w_1, \ldots, w_j $ results in a symmetric, $ (k-j) $-multilinear form 
\[  [v_1, \ldots, v_{k-j}] \mapsto D^kf(x)[w_1, \ldots, w_j, v_1, \ldots, v_{k-j}] \; . \]
This form is denoted  $ D^kf(x)[w_1, \ldots, w_j] $.

A fact used extensively, and which is easily proven (by, say, induction on $ j $ with base case $ j = 0 $), occurs when $ f $ is homogeneous of degree $ \ell $ (i.e., $ f(tx) = t^{ \ell} f(x) $): For $ 0 \leq j \leq k \leq \ell   $,  
\[   D^kf(x)[\underbrace{x, \ldots,x}_{ \textrm{$ j $ times}}] = \smfrac{(\ell -k+j)!}{( \ell -k)!} D^{k-j}f(x) \; . \]
In particular, for $ 0 \leq j \leq k \leq n- i $,
\begin{equation}  \label{e.c.f}
   D^k\ppie{i}{e}(x)[\underbrace{x, \ldots,x}_{ \textrm{$ j $ times}}] = \smfrac{(n-i-k+j)!}{(n-i-k)!} D^{k-j}\ppie{i}{e}(x) \; . 
\end{equation}
A consequence of (\ref{e.c.f})  used occasionally is that if $ 0 \leq k \leq n-i $, then
\begin{equation}  \label{e.c.g}
 D^k \ppie{i}{e}(e)  = D^{k+i} p(e)[\underbrace{e, \ldots,e}_{ \textrm{$ i $ times}}]  
 =  \smfrac{(n-k)!}{(n-i-k)!} D^{k}p(e)  
\end{equation}
-- in particular, $ D^k \ppie{i}{e}(e) $ is a positive multiple of $ D^k p(e) $. 
 
Finally, we mention that the nesting of the derivative cones often is used implicitly.  Two examples of assertions made without the nesting being mentioned: ``If $ x \in \optie{i}{e} $ and $ x \in \feas $, then $ x \in \opt $.''  ``If $ x \in \Lambdaiepp{i}{e} $ then $ \ppie{i+1}{e}(x) > 0 $.''

\section{{\bf  Proofs of Theorems 1 and 2, and the Nesting of Swaths}}  \label{s.d}

The following theorem records results from \cite{renegar} that are used in this section. 

\begin{thm}  \label{t.d.a}
For any polynomial hyperbolic in direction $ e $, the following hold:
\begin{enumerate} 

\item For $ i = 1, \ldots, n-1 $, $   \Lambdaiep{i-1}{e} \cap  \partial \Lambdaiep{i}{e}=  \Lambdap \cap \partial \Lambdaiep{i}{e}  $.

\item For $ i = 0, \ldots, n-1 $, the intersection $ \Lambdap \cap \partial \Lambdaiep{i}{e} $ is independent of $ e \in \Lambdapp $. 

\item For $ i = 0, \ldots, n-2 $, the lineality space of $ \Lambdaiep{i}{e} $ consists precisely of the points in $ \Lambdaiep{i}{e} \cap  \partial \Lambdaiep{n-1}{e} $. 

\item  The cones $ \Lambdaiep{0}{e}, \Lambdaiep{1}{e}, \ldots, \Lambdaiep{n-2}{e} $ have the same lineality space (thus, if one of the cones is regular, all are regular).  

\item If $ \Lambdap $ is regular and $ 0 \leq i \leq n-2 $, then any face of $ \Lambdaiep{i}{e} $ either is a face of $ \Lambdap $ or is an extreme direction of $ \Lambdaiep{i}{e} $.

\end{enumerate} 
\end{thm}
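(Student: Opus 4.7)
The five parts reduce to properties of the family $\ppi{0}_e, \ldots, \ppi{n-1}_e$ via the polynomial characterizations (\ref{e.c.a})--(\ref{e.c.c}), the homogeneity identities (\ref{e.c.f})--(\ref{e.c.g}), and -- crucially -- the interlacing of the real roots of consecutive derivatives guaranteed by Rolle's theorem. I would prove the parts in the order (A), (C), (D), (B), (E).

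For (A), one inclusion follows from $\Lambdap \subseteq \Lambdaiep{i-1}{e}$. For the reverse, starting with $x \in \Lambdaiep{i-1}{e} \cap \partial \Lambdaiep{i}{e}$, (\ref{e.c.e}) together with (\ref{e.c.a}) yields $\ppie{i-1}{e}(x) = 0$; combined with the fact that the smallest root of $t \mapsto \ppie{i}{e}(x-te)$ is pinned to zero (from $x \in \partial \Lambdaiep{i}{e}$), an interlacing cascade propagates downward to force the smallest root of each $t \mapsto \ppie{j}{e}(x-te)$ to be nonnegative for $j < i-1$, placing $x \in \Lambdap$. The subtle rigidity needed in the cascade -- ensuring no stray negative root appears at an intermediate step -- is what I expect to be the main technical obstacle.

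For (C), membership $x \in L(\Lambdaiep{i}{e}) = \Lambdaiep{i}{e} \cap -\Lambdaiep{i}{e}$ forces $\ppie{n-1}{e}(x) = 0$ via the sign relation $\ppie{n-1}{e}(-x) = -\ppie{n-1}{e}(x)$; conversely, $x \in \Lambdaiep{i}{e}$ with $\ppie{n-1}{e}(x) = 0$ triggers an interlacing cascade downward through $j = n-2, n-3, \ldots, i$ that forces $\ppie{j}{e}(x) = 0$ for all $i \leq j \leq n-1$, whence $\ppie{j}{e}(-x) = (-1)^{n-j} \ppie{j}{e}(x) = 0$ and $-x \in \Lambdaiep{i}{e}$. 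For (D), I would push this further: for $x \in L(\Lambdaiep{i}{e})$ with $i \leq n-2$, the condition $\pm x \in \Lambdaiep{i}{e}$ pins every root of $t \mapsto \ppie{i}{e}(x-te)$ to zero, so $\ppie{i}{e}(x-te) = C_i t^{n-i}$; integrating via $\frac{d}{dt} \ppie{j}{e}(x-te) = -\ppie{j+1}{e}(x-te)$ and invoking real-rootedness of each $t \mapsto \ppie{j}{e}(x-te)$ (automatic from hyperbolicity) forces each integration constant to vanish, giving $\ppie{j}{e}(x-te) = C_j t^{n-j}$ down to $j = 0$, so all eigenvalues of $x$ are zero and $x \in L(\Lambdap)$; the reverse inclusion is trivial from nesting.

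For (B), I would give $\Lambdap \cap \partial \Lambdaiep{i}{e}$ an intrinsic description: it is the set of $x \in \Lambdap$ for which $t = 0$ is a root of $t \mapsto p(x - te)$ of multiplicity at least $n - i$, a condition referencing only $p$; continuity of the roots as $e$ varies in the connected set $\Lambdapp$ then secures invariance. For (E), the case $F \subseteq \Lambdap$ is handled directly: if $x, y \in \Lambdap$ with $x + y \in F$, then $x, y \in \Lambdaiep{i}{e}$, so the face property for $\Lambdaiep{i}{e}$ forces $x, y \in F$, making $F$ a face of $\Lambdap$. Otherwise $F$ contains a point $x \notin \Lambdap$, and a dimension count using that exactly one of the active constraints among $\{\ppie{j}{e}\}_{j \geq i}$ can be nontrivially active at such a point (the others being forced by the cascade of part (A)) pins the affine dimension of $F$ to one, realizing $F$ as an extreme ray of $\Lambdaiep{i}{e}$.
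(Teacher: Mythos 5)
The paper does not actually prove this theorem: its ``proof'' is a list of citations (Propositions 11, 13, 16, 22, 24 and Theorem 12 of \cite{renegar}), so any from-scratch argument is necessarily a different route, and yours has the right skeleton for (C) and (D) -- the centroid/antiderivative rigidity you describe does work, since $t\mapsto t^{m}+d$ is real-rooted for $m\ge 3$ only when $d=0$. But three genuine gaps remain. In (A), the cascade you flag as ``the main technical obstacle'' is exactly the whole content of the statement, and interlacing alone points the wrong way: it gives that the largest root of $\phi^{(j-1)}$ is \emph{at least} the largest root of $\phi^{(j)}$, where $\phi(t):=p(x+te)$. What closes the argument is a counting step: the number of nonnegative roots of a real-rooted $\psi$ drops by at most one in passing to $\psi'$, while two or more strictly positive roots of $\psi$ would, by Rolle, force $\psi'$ to have a positive root. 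Starting from the fact that $0$ is a root of $\phi^{(i-1)}$ of multiplicity at least $2$ (forced by $x\in\Lambdaiep{i-1}{e}\cap\partial\Lambdaiep{i}{e}$ and the equality case of interlacing), this propagates ``$0$ is a root of one higher multiplicity and there are no positive roots'' down to $\phi$ itself. You need to supply this; without it (A) is unproved, and your (B) and (E) both lean on it.

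In (B) the multiplicity is wrong: $\Lambdap\cap\partial\Lambdaiep{i}{e}$ is the set of $x\in\Lambdap$ at which $0$ is a root of $t\mapsto p(x+te)$ of multiplicity at least $i+1$ (equivalently $\ppie{j}{e}(x)=0$ for $j=0,\dots,i$), not $n-i$; your formula would give all of $\partial\Lambdap$ when $i=n-1$, where the correct set is the lineality space. More seriously, ``continuity of the roots'' does not yield $e$-independence of this multiplicity, since multiplicity is only upper semicontinuous (a multiple root can split under perturbation); the paper itself resorts to the Helton--Vinnikov theorem for the closely related Corollary~\ref{t.j.f}, and even for $x\in\Lambdap$ one must argue (e.g.\ via the counting step above along a segment in $\Lambdapp$) that no zero root can escape to the open negative axis. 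Finally, in (E) the dimension count fails: knowing that only $\ppie{i}{e}$ is nontrivially active at $x\notin\Lambdap$ with $D\ppie{i}{e}(x)\neq 0$ confines the face to a supporting hyperplane, i.e.\ to codimension one, not dimension one. Pinning the face to a single ray requires the strict curvature of the boundary at such points, namely $D^2\ppie{i}{e}(x)[v,v]<0$ for tangent $v$ not parallel to $x$ (Theorem~\ref{t.e.c}(C), i.e.\ Theorem 14 of \cite{renegar}), which your sketch never invokes; without it the face could a priori be a flat piece of positive dimension.
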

\begin{proof} Result (A) is Proposition 16 in \cite{renegar}; (B) is established from Theorem 12 by induction (with base case $ i = 0 $) and by use of Proposition 22 (which shows that although the definition of the sets ``$ \partial^m \Lambdap $'' appearing in Theorem 12 depends on a derivative direction $ e $, the sets actually are independent of $ e \in \Lambdapp $); (C) follows from Proposition 11;  (D) is immediate from Proposition 13 by induction with base case $ i = 1 $ (alternatively, follows from (A) and (C)); (E) is Proposition 24. \end{proof}  

Here again is Theorem~\hyperlink{targ_thm_one}{1}, but renamed:
\begin{cor}  \label{t.d.b}
Assume $ \Lambdap $ is regular and $ 0 \leq i \leq n-2 $.
\begin{enumerate}

\item  The intersection $ \Lambdap \cap \partial \Lambdaiep{i}{e} $ is independent of $ e \in \Lambdapp $ \\
(thus, a face of $ \Lambdap $ which is a boundary face of $ \Lambdaiep{i}{e} $ for some $ e \in \Lambdapp $ is a boundary face for all $ e \in \Lambdapp $).  

\item If $ e \in \Lambdapp $ then any boundary face of $ \Lambdaiep{i}{e} $ either is a face of $ \Lambdap $ \\ or is a single ray  contained in $ \Lambdaiepp{i+1}{e} \setminus \Lambdaiep{i-1}{e} $. 
\end{enumerate} 
\end{cor}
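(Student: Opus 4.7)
Part~(A) is immediate from Theorem~\ref{t.d.a}(B), restricted to $0 \leq i \leq n-2$. For Part~(B), my plan is to invoke Theorem~\ref{t.d.a}(E) to reduce to the case where the boundary face $F$ of $\Lambdaiep{i}{e}$ is an extreme direction, so $F = \reals_+ x$ for some $x \neq 0$, and is not a face of $\Lambdap$. I then must show $F \setminus \{0\} \subseteq \Lambdaiepp{i+1}{e} \setminus \Lambdaiep{i-1}{e}$. The case $i = 0$ is trivial, because $\Lambdaiep{0}{e} = \Lambdap$, so every boundary face is automatically a face of $\Lambdap$.

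The ``$\setminus \Lambdaiep{i-1}{e}$'' half is straightforward: if $x \in \Lambdaiep{i-1}{e}$, then Theorem~\ref{t.d.a}(A) places $x$ in $\Lambdap$, so $F = \reals_+ x \subseteq \Lambdap$; the standard fact that a face of a larger convex set which happens to lie inside a smaller convex subset is itself a face of the smaller set then makes $F$ a face of $\Lambdap$, contradicting the setup.

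The ``$\subseteq \Lambdaiepp{i+1}{e}$'' half is the main obstacle, and I would establish it by downward induction on $i$ from $n-2$ to $1$. Assuming $\ppie{k}{e}(x) = 0$ for some $k \geq i+1$, pick the smallest such index $j$; then $\ppie{\ell}{e}(x) > 0$ for all $\ell > j$. If $j = n-1$, then $x \in \Lambdaiep{i}{e} \cap \partial \Lambdaiep{n-1}{e}$, which by Theorem~\ref{t.d.a}(C) is the lineality space of $\Lambdaiep{i}{e}$, a space that is trivial by Theorem~\ref{t.d.a}(D) and regularity of $\Lambdap$; this forces $x = 0$, a contradiction. (The base case $i = n-2$ is also settled here, since $j$ is then forced to equal $n-1$.) If $i+1 \leq j \leq n-2$, let $G$ be the minimal face of $\Lambdaiep{j}{e}$ containing $x$, which is a boundary face because $\ppie{j}{e}(x) = 0$; the inductive hypothesis then applies at index $j > i$. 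Either $G$ is a face of $\Lambdap$, in which case $x \in \Lambdap \subseteq \Lambdaiep{i-1}{e}$, contradicting the previous paragraph; or $G$ is a single ray disjoint from $\Lambdaiep{j-1}{e}$, contradicting $x \in \Lambdaiep{i}{e} \subseteq \Lambdaiep{j-1}{e}$ (valid since $j - 1 \geq i$).

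The principal difficulty is that $F$ can, in principle, be an extreme ray of several of the nested derivative cones at once, so no one-shot argument easily identifies ``which'' polynomial $\ppie{k}{e}$ is responsible for the boundary behaviour at $x$. The downward induction on $i$ converts that ambiguity into leverage: at any problematic index $j > i$, the ``$\setminus \Lambdaiep{j-1}{e}$'' clause of the inductive hypothesis collides with the nesting $\Lambdaiep{i}{e} \subseteq \Lambdaiep{j-1}{e}$, yielding the required contradiction.
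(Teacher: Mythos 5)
Your proof is correct, and both part (A) and your treatment of the ``$\setminus \Lambdaiep{i-1}{e}$'' half coincide with the paper's argument, which likewise rests on parts (A), (B), (E) of Theorem~\ref{t.d.a}. Where you genuinely diverge is the ``$\subseteq \Lambdaiepp{i+1}{e}$'' half. The paper obtains this in one line by applying Theorem~\ref{t.d.a}(A) a second time with $i+1$ in place of $i$: for $x \in \Lambdaiep{i}{e} \setminus \Lambdap$, the identity $\Lambdaiep{i}{e} \cap \partial \Lambdaiep{i+1}{e} = \Lambdap \cap \partial \Lambdaiep{i+1}{e}$ forces $x \notin \partial \Lambdaiep{i+1}{e}$, and since $x \in \Lambdaiep{i+1}{e}$ by nesting, $x$ lies in the interior $\Lambdaiepp{i+1}{e}$. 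You instead unwind $\Lambdaiepp{i+1}{e}$ through the polynomial characterization and must rule out $\ppie{j}{e}(x) = 0$ separately for each $j \geq i+1$, which is what drives your downward induction. Note, however, that the only clause of the inductive hypothesis you ever invoke at index $j$ is the ``disjoint from $\Lambdaiep{j-1}{e}$'' clause, which you had already established unconditionally for every index; so the induction is really a single application of Theorem~\ref{t.d.a}(A) at level $j$ in disguise --- the paper's mechanism, triggered at a vanishing index $j$ rather than at $i+1$. Two minor points: your parenthetical ``then $\ppie{\ell}{e}(x) > 0$ for all $\ell > j$'' is backwards for the \emph{smallest} vanishing index (minimality gives positivity for $i+1 \leq \ell < j$), but the claim is never used; and your separate $j = n-1$ case via the lineality-space results (C), (D) is sound but is also subsumed by (A), since $\ppie{n-1}{e}(x) = 0$ already places $x$ in $\Lambdaiep{n-2}{e} \cap \partial \Lambdaiep{n-1}{e} = \Lambdap \cap \partial \Lambdaiep{n-1}{e}$.
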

\begin{proof}    
Parts (B) and (E) of Theorem~\ref{t.d.a}  mostly prove the corollary.  It remains only to show
\begin{equation}  \label{e.d.a}
    x \in ( \partial \Lambdaiep{i}{e}) \setminus \Lambdap \quad \Rightarrow \quad x \in \Lambdaiepp{i+1}{e} \setminus \Lambdaiep{i-1}{e} \; . 
\end{equation}
However, Theorem\ref{t.d.a}(A) immediately gives
\[  x \in ( \partial \Lambdaiep{i}{e}) \setminus \Lambdap \quad \Rightarrow \quad x \notin \Lambdaiep{i-1}{e} \; , \]
and upon substituting $ i+1 $ for $ i $, immediately gives
\[  x \in \Lambdaiep{i}{e} \setminus \Lambdap \quad \Rightarrow \quad x \notin \partial \Lambdaiep{i+1}{e} \; , \]
that is, gives
\[  x \in \Lambdaiep{i}{e} \setminus \Lambdap \quad \Rightarrow \quad x \in  \Lambdaiepp{i+1}{e} \; .\]
The implication (\ref{e.d.a})  is thus established, concluding the proof.
\end{proof}

{\bf Throughout the remainder of the paper, our standard assumptions apply without being made explicit in the statements of theorems, propositions, etc.}  Recall these assumptions regard the objective function $ c^* x $, the equations $ Ax = b $, and the cone $ \Lambdap $:
\begin{itemize}

\item  $ b \neq 0 $ (in particular, the origin is infeasible) 

\item   $ A $ is surjective (i.e., onto)

\item $ c^* $ is not in the image of $ A^* $ (otherwise all feasible points would be optimal) 

\item $ \Lambdap $ is a regular cone, that is, contains no subspaces other than $ \{ 0 \} $ 

\end{itemize}
Thus, by Theorem~\ref{t.d.a}(D), $ \Lambdaiep{i}{e} $ is regular for all $ 0 \leq i \leq n-2 $.

Recall the definitions of swaths and cores:
\begin{align*}
  & \swath(i)  :=  \{ e \in \Lambdapp:  Ae = b \textrm{ and }  \optie{i}{e} \neq \emptyset \} \; , \\
& \core(i)  := \{ e \in \swath(i): \optie{i}{e} = \opt \} \; . 
\end{align*} 
Recall, too, that the above corollary (aka Theorem 1) and our standard assumptions easily imply that if $ 1 \leq i \leq n-2 $ and $ e \in \swath(i) \setminus \core(i) $, then $ \optie{i}{e} $ consists of a single point, denoted $ \xie{i}{e} $.  The corollary implies, moreover, that 
\begin{equation}  \label{e.d.b}
    \xie{i}{e} \in \relint(\feasie{i+1}{e}) \setminus \feasie{i-1}{e} \; . 
\end{equation}

In \S\ref{s.b}  we noted that from the nesting 
\[  \feasie{0}{e} \subseteq \feasie{1}{e} \subseteq \cdots \subseteq \feasie{n-1}{e} \]
easily follows the nesting of cores,
\[   \core(0) \supseteq \core(1) \supseteq \cdots \supseteq \core(n-1) \; . \]  
We now establish (a bit more than) the nesting of swaths. 
\begin{prop}  \label{t.d.c}
The swaths are nested,
\[ \swath(0) \supseteq \swath(1) \supseteq \cdots \supseteq \swath(n-1)  \] 
(thus, if any swath is nonempty, so is $ \swath(0) $ -- equivalently, so is $ \opt $).  

Moreover, if $ \swath(n-1) \neq \emptyset $, or if $ \swath(i) \setminus \core(i) \neq \emptyset $ for some $ 1 \leq i \leq n-2 $, then $ \opt $ is a bounded set.
\end{prop}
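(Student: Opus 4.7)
The plan is to reduce both claims --- the nesting of swaths and the boundedness of $\opt$ --- to a single recession-cone estimate: that under each of the proposition's hypotheses one has $c^* d > 0$ for every nonzero $d \in \Lambdap \cap \ker A$. From this the sublevel sets $\{x \in \feas : c^* x \le \alpha\}$ have trivial recession cone and hence are compact, which immediately yields $\opt \neq \emptyset$, $\opt$ bounded, and (after replacing $\Lambdap$ by $\Lambdaiep{i-1}{e}$) the nonemptiness of $\optie{i-1}{e}$.

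For the nesting, fix $e \in \swath(i)$ with $1 \le i \le n-1$. If $e \in \core(i)$, any $x \in \opt = \optie{i}{e}$ lies in $\feas \subseteq \feasie{i-1}{e}$ with $c^* x = \valie{i}{e}$; combined with the relaxation inequality $\valie{i-1}{e} \ge \valie{i}{e}$ this forces equality and $x \in \optie{i-1}{e}$, so $e \in \swath(i-1)$. If instead $e \in \swath(i) \setminus \core(i)$ with $1 \le i \le n-2$, then by Corollary~\ref{t.d.b} the set $\optie{i}{e}$ equals $\{\xie{i}{e}\}$. The recession cone of $\feasie{i}{e}$ is $\ker A \cap \Lambdaiep{i}{e}$, and the uniqueness of $\xie{i}{e}$ rules out any nonzero $d$ in that cone with $c^* d \le 0$: strict negativity would force $\valie{i}{e} = -\infty$, while $c^* d = 0$ would make $\xie{i}{e} + t d$ optimal for every $t \ge 0$, contradicting uniqueness. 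Thus $c^* d > 0$ on the nonzero recession directions of $\feasie{i}{e}$, and the inclusions $\ker A \cap \Lambdap \subseteq \ker A \cap \Lambdaiep{i-1}{e} \subseteq \ker A \cap \Lambdaiep{i}{e}$ transport the inequality to the recession cones of $\feasie{i-1}{e}$ and of $\feas$, delivering $\optie{i-1}{e} \neq \emptyset$ together with boundedness of $\opt$.

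The remaining case $i = n - 1$ --- needed both to complete the nesting chain and to handle the $\swath(n-1) \neq \emptyset$ half of the ``moreover'' clause --- is the main obstacle, since the uniqueness argument just used can fail: the halfspace $\Lambdaiep{n-1}{e}$ offers no curvature and $\optie{n-1}{e}$ may be an unbounded face. My plan is to replace uniqueness by LP duality. For $e \in \swath(n-1)$ the problem $\hpie{n-1}{e}$ is literally the linear program of minimizing $c^* x$ subject to $A x = b$ and $\ppie{n-1}{e}(x) \ge 0$ (recall $\Lambdaiep{n-1}{e}$ is a halfspace because $\ppie{n-1}{e}$ is linear), and its optimum is attained by hypothesis; hence LP duality supplies $y^*$ and $\mu \ge 0$ with $c^* = y^* A + \mu \, \ppie{n-1}{e}$, and $\mu > 0$ because $c^* \notin \range(A^*)$. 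For $0 \neq d \in \Lambdap \cap \ker A$ one has $\ppie{n-1}{e}(d) \ge 0$ from $\Lambdap \subseteq \Lambdaiep{n-1}{e}$, with equality impossible since by Theorem~\ref{t.d.a}(C) the set $\Lambdap \cap \partial \Lambdaiep{n-1}{e}$ equals the lineality space of $\Lambdap$, which is $\{0\}$ under our standing regularity assumption. Hence $c^* d = \mu \, \ppie{n-1}{e}(d) > 0$, and the sublevel-set argument again delivers $\optie{n-2}{e} \neq \emptyset$ and boundedness of $\opt$, completing both the nesting chain and the ``moreover'' clause.
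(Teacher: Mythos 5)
Your overall strategy is the same as the paper's: for $1 \leq i \leq n-2$ the paper also deduces from the boundedness of $\optie{i}{e}$ (a single point off the core) that the level sets of $\feasie{i}{e}$, hence of $\feasie{i-1}{e}$ and $\feas$, are bounded, and for $i = n-1$ it also uses the first-order/LP multiplier $c^* = y^*A + \lambda\, d^*$ with $\lambda > 0$ forced by $c^* \notin \range(A^*)$. Your recession-cone phrasing just makes explicit what the paper calls ``standard convexity arguments,'' and your treatment of the core case is a harmless re-derivation of the already-established nesting of cores.

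There is, however, one concrete gap in your $i = n-1$ step. You establish $c^*d > 0$ only for nonzero $d \in \Lambdap \cap \ker A$, which suffices for boundedness of $\opt$ but not for the attainment claim $\optie{n-2}{e} \neq \emptyset$: the recession cone of $\feasie{n-2}{e}$ is $\ker A \cap \Lambdaiep{n-2}{e}$, a strictly larger cone than $\ker A \cap \Lambdap$ in general, and your sublevel-set argument needs $c^*d>0$ there. The repair is immediate from the results you already cite: by Theorem~\ref{t.d.a}(C) with $i = n-2$ together with (D), the set $\Lambdaiep{n-2}{e} \cap \partial\Lambdaiep{n-1}{e}$ is the common lineality space of the derivative cones, which is $\{0\}$ by regularity of $\Lambdap$; hence $\ppie{n-1}{e}(d) > 0$ for every nonzero $d \in \Lambdaiep{n-2}{e}$, and your identity $c^*d = \mu\, \ppie{n-1}{e}(d)$ then gives positivity on all of $(\ker A \cap \Lambdaiep{n-2}{e}) \setminus \{0\}$. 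This is exactly how the paper handles that case. With that one-line correction your proof is complete.
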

\begin{proof} We know $ \core(i) \subseteq \core(i-1) $, so to prove the nesting of swaths, we assume $ e \in \swath(i) \setminus \core(i) $ and show $ e \in \swath(i-1) $.  

First consider the case $ 1 \leq i \leq n-2 $, where  $ \optie{i}{e} $ consists of the single point $ \xie{i}{e} $ -- in particular, $ \optie{i}{e} $ is nonempty and bounded, from which follows by standard convexity arguments that the level sets $ \{ x \in \feasie{i}{e}: c^*x \leq \alpha \} $ ($ \alpha \in \mathbb{R} $) are bounded.  Since $ \feasie{i-1}{e} \subseteq \feasie{i}{e} $, the level sets $ \{ x \in \feasie{i-1}{e}: c^*x \leq \alpha \} $ also are bounded.  From this and the nonemptiness of $ \feasie{i-1}{e} $ (indeed, $ e \in \feasie{i-1}{e} $) easily follows $ \optie{i-1}{e} \neq \emptyset $, that is, $ e \in \swath(i-1) $, as desired.

Observe, too, that the boundedness of the level sets $ \{ x \in \feasie{i}{e}: c^*x \leq \alpha \} $, and the relations $ \feasie{i}{e} \supseteq \feasie{j}{e}  $ for $ j = 0, \ldots, i $, imply $ \optie{j}{e} $ is bounded for all $ j = 0, \ldots, i $ -- in particular, $ \opt = \optie{0}{e} $ is bounded, thereby establishing the final statement of the proposition for the case that $ \swath(i) \setminus \core(i) \neq \emptyset  $ for some $ 1 \leq i \leq n-2 $.  (We note as an aside that always, $ \core(0) = \swath(0) $, so never is $ \core(0) $ a proper subset of $ \swath(0) $.) 

We have left to prove that $ \swath(n-1) \subseteq \swath(n-2) $, and that if $ \swath(n-1) \neq \emptyset $, then $ \opt $ is bounded.  Here we rely on the following result, which is immediate from (C) and (D) of Theorem~\ref{t.d.a} :
\begin{enumerate}
\addtocounter{enumi}{5}

\item For $ i = 0, \ldots, n-2 $, the lineality space of $ \Lambdap $ consists precisely of the points in $ ( \partial \Lambdaiep{n-1}{e}) \cap \Lambdaiep{i}{e} $. 

\end{enumerate}

Now, for every $ e \in \Lambdapp $, the cone $ \Lambdaiep{n-1}{e} $ is a halfspace, i.e., $ \Lambdaiep{n-1}{e} = \{ x: d^*x \geq 0 \} $ for some linear functional $ d^* $.   Moreover, since $ \Lambdap $ is a regular cone, the hyperplane $ \{ x: d^*x = 0 \} $  intersects the cone $ \Lambdaiep{n-2}{e} $ only at the origin (by (F) with $ i = n-2 $).   Consequently, the level sets $ \{ x \in \Lambdaiep{n-2}{e}: d^* x \leq \alpha \} $ ($ \alpha \in \mathbb{R} $) are bounded. 

Assume $ e \in \swath(n-1) $. Then, clearly, 
\[  \optie{n-1}{e} = \{ x: Ax = b \textrm{ and } d^*x = 0 \} \; , \]
and the first-order conditions are satisfied:
\[  c^* = y^*A + \lambda d^* \quad \textrm{for some $ \lambda \geq 0 $ and $ y^* $} \; . \]
Since $ c^* $ is not in the image of $ A^* $ (by assumption), it must be that $ \lambda > 0 $, from which follows that each level set $ \{ x \in \Lambdaiep{n-2}{e}: Ax = b \textrm{ and } c^*x \leq \beta \} $ is a level set $ \{ x: Ax = b \textrm{ and } d^*x \leq \alpha  \} $ for some $ \alpha $ (depending on $ \beta $), and hence is bounded (according to the conclusion of the preceding paragraph).  From this easily follows $ \optie{n-2}{e} \neq \emptyset $, that is, $ e \in \swath(n-2) $.   It also follows, of course, that for all $ 0 \leq i \leq n-2 $, the level sets $ \{ x \in \Lambdaiep{i}{e}: Ax = b \textrm{ and } c^*x \leq \beta \} $ are bounded, and hence that $ \optie{i}{e} $ is bounded -- in particular, $ \opt = \optie{0}{e} $ is bounded. \end{proof} 
 
We close this section by restating and proving Theorem \hyperlink{targ_thm_two}{2}.

\begin{thm}  \label{t.d.d}
\quad  $  \swath(n-1) = \mathrm{Central \, Path}  $
\end{thm}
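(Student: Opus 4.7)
The plan is to recognize $\hpie{n-1}{e}$ as a linear program whose KKT conditions match, after rescaling, the stationarity condition for the central-path barrier problem at $e$. My opening move would be to apply equation (\ref{e.c.g}) with $i=n-1$ and $k=1$ to obtain $D\ppie{n-1}{e}(e) = (n-1)!\,Dp(e)$; since $\ppie{n-1}{e}$ has degree $n-(n-1)=1$ in $x$ and is homogeneous, it coincides with its own differential, so
\[
\ppie{n-1}{e}(x) \;=\; (n-1)!\,Dp(e)[x].
\]
Consequently $\Lambdaiep{n-1}{e}$ is the halfspace $\{x : Dp(e)[x] \ge 0\}$, in whose interior $e$ lies by Euler's identity $Dp(e)[e] = n\,p(e) > 0$, and $\hpie{n-1}{e}$ becomes the linear program
\[
\min c^*x \quad \mathrm{s.t.} \quad Ax = b,\quad Dp(e)[x] \ge 0.
\]

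To prove $\cpath \subseteq \swath(n-1)$, I would take $e = x(\eta)$ for some $\eta > 0$ and write the central-path stationarity condition as $\eta c^* = y^*A + p(e)^{-1}Dp(e)$ for some $y^*$. Dividing through by $\eta$ exhibits dual variables $y^*/\eta$ and multiplier $1/(\eta\,p(e)) > 0$ satisfying the LP dual equation $c^* = (y^*/\eta) A + (1/(\eta\,p(e)))\,Dp(e)$. Together with the primal feasibility of $e$, this produces an optimal primal solution of $\hpie{n-1}{e}$, so $e \in \swath(n-1)$.

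For the reverse inclusion $\swath(n-1) \subseteq \cpath$, I would start from $e \in \swath(n-1)$ and invoke LP duality: the existence of an optimal primal solution yields $y^*$ and $\mu \ge 0$ with $c^* = y^*A + \mu\,Dp(e)$. The standing assumption that $c^*$ is not in the image of $A^*$ rules out $\mu = 0$, so $\mu > 0$, and setting $\eta := 1/(\mu\,p(e)) > 0$ and $\tilde{y}^* := \eta\, y^*$ recasts the equation as $\eta c^* = \tilde{y}^* A + p(e)^{-1} Dp(e)$ — precisely the first-order conditions for $e$ to minimize $\eta c^*x - \ln p(x)$ subject to $Ax = b$. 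Because $\Lambdap$ is a regular cone, $-\ln p$ is strictly convex on $\Lambdapp$ (its Hessian is nondegenerate on the complement of the lineality space, which is trivial), so this stationarity condition uniquely identifies $e$ as the minimizer $x(\eta)$, giving $e \in \cpath$.

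The only real obstacle is the initial algebraic identification of $\ppie{n-1}{e}$ with a positive multiple of $Dp(e)$; once this is in hand, the remainder is a transparent dictionary between LP KKT and central-path stationarity, which coincide modulo the single positive scalar relating $\mu$ and $\eta$.
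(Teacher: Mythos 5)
Your proposal is correct and follows essentially the same route as the paper: identify $\Lambdaiep{n-1}{e}$ as the halfspace $\{x: Dp(e)[x]\ge 0\}$ via (\ref{e.c.g}) and linearity of $\ppie{n-1}{e}$, then match the LP optimality conditions for $\hpie{n-1}{e}$ with the central-path stationarity conditions, using the assumption $c^*\notin\range(A^*)$ to force the multiplier to be strictly positive. The only difference is cosmetic — you spell out the rescaling between $\mu$ and $\eta$ that the paper dismisses as "obvious."
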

\begin{proof} The central path consists precisely of the points in $ \{ x \in \Lambdapp:  Ax = b \} $ which minimize $ f_{\eta}(x) := \eta \, c^* x - \ln p(x) $ for some $ \eta > 0 $. As the functions $ \eta $ are convex, the first-order optimality conditions are sufficient as well as necessary; thus, the central path consists precisely of the points $ e $ satisfying
\begin{equation}  \label{e.d.c}
 \left.    \begin{array}{l}  
\eta c^* - \smfrac{1}{p(e)} Dp(e) = y^*A \quad \textrm{for some $ \eta > 0 $ and $ y^* $} \\
    Ae = b \\
     e \in \Lambdapp \; . \end{array}  \quad \right\}
\end{equation}

For every $ e \in \Lambdapp $, on the other hand, from the linearity of $ x \mapsto \ppie{n-1}{e}(x) $, we have $ \ppie{n-1}{e}(x) =  D\ppie{n-1}{e}(z)[x] $ for every point $ z $ -- in particular for $ z = e $ -- and thus
\[   \Lambdaiep{n-1}{e} = \{ x: D\ppie{n-1}{e}(e)[x] \geq 0 \} \; . \]
Clearly, then, $ \optie{n-1}{e} \neq \emptyset $ if and only if
\[  c^* = \lambda D\ppie{n-1}{e}(e) + w^*A \quad \textrm{for some $ \lambda \geq  0 $ and $ w^* $} \; . \]
Since $ c^* $ is not in the range of $ A^* $ (by assumption), and since $ D\ppie{n-1}{e}(e) = (n-1)! \, Dp(e) $ (by (\ref{e.c.g})), we thus have that $ e \in \swath(n-1) $ if and only if
\begin{equation}  \label{e.d.d}
  \left.   \begin{array}{l}  
  c^* = \lambda \, (n-1)! \, Dp(e) + w^*A \quad \textrm{for some $ \lambda >  0 $ and $ w^* $} \\
    Ae = b \\
     e \in \Lambdapp \; . \end{array}  \quad \right\}
\end{equation}
Obviously, the conditions (\ref{e.d.d})  and (\ref{e.d.c})  are equivalent. \end{proof}    

\section{{\bf  Proof of Theorem 3}} \label{s.e} 

Having finished proving that for each $ e \in \swath(i) \setminus \core(i) $, the set $ \optie{i}{e} $ consists of a unique point $ \xie{i}{e} $, it is time to show that the differential equation 
\[  \smfrac{d}{dt} e(t) = \xie{i}{e(t)} - e(t), \quad e(0) \in \swath(i) \setminus \core(i) \]
results in well-defined trajectories.  This is immediate from Theorem 3, which we now restate and prove.

\begin{thm}  \label{t.e.a}
Assume $ 1 \leq i \leq n-2 $.
The set $ \swath(i) \setminus \core(i) $ is open in the relative topology of $ \relint(\feas) $.
Moreover, the map $ e \mapsto \xie{i}{e} $ is analytic on $ \swath(i) \setminus \core(i) $.
\end{thm}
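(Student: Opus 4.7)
The plan is to characterize $\xie{i}{e}$ as the unique solution of a parametrized KKT system whose Jacobian is invertible, and then invoke the analytic implicit function theorem.

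By (\ref{e.a.c}), the point $x := \xie{i}{e}$ lies in $\Lambdaiepp{i+1}{e} \cap \partial \Lambdaiep{i}{e}$, so $\ppie{j}{e}(x) > 0$ for all $j > i$ while $\ppie{i}{e}(x) = 0$, and locally near $x$ the only active cone-constraint is $\ppie{i}{e}(\cdot) \geq 0$. Convexity of $\hpie{i}{e}$ makes the KKT conditions both necessary and sufficient, yielding multipliers $\lambda \geq 0$ and $y^*$ with $c^* = \lambda D\ppie{i}{e}(x) + y^* A$, $Ax = b$, $\ppie{i}{e}(x) = 0$; the standing hypothesis $c^* \notin \range(A^*)$ forces $\lambda > 0$. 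I would then apply the analytic implicit function theorem to
\[
F(x, y^*, \lambda;\, e) \;:=\; \bigl(\,c^* - \lambda D\ppie{i}{e}(x) - y^*A,\; Ax - b,\; \ppie{i}{e}(x)\,\bigr),
\]
whose partial Jacobian with respect to $(x,y^*,\lambda)$ at the solution is the standard KKT-block matrix
\[
J \;=\; \begin{pmatrix}
-\lambda\,D^2\ppie{i}{e}(x) & -A^* & -(D\ppie{i}{e}(x))^{*} \\
A & 0 & 0 \\
D\ppie{i}{e}(x) & 0 & 0
\end{pmatrix}.
\]
Invertibility of $J$ is equivalent to (i) linear independence of $A$ and $D\ppie{i}{e}(x)$ (LICQ), and (ii) the strict second-order sufficient condition: $D^2\ppie{i}{e}(x)[v,v] < 0$ for every nonzero $v \in \ker A \cap \ker D\ppie{i}{e}(x)$. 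Condition (i) is immediate: $A$ is surjective, and $D\ppie{i}{e}(x) \notin \range(A^*)$ since otherwise $c^* = \lambda\,D\ppie{i}{e}(x) + y^*A$ would also lie in $\range(A^*)$, contradicting our standing assumption.

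The main obstacle is (ii). Fix nonzero $v \in \ker A \cap \ker D\ppie{i}{e}(x)$. Because $\xie{i}{e} \notin \feasie{i-1}{e} \supseteq \feas$ (by (\ref{e.a.b})) gives $x \notin \Lambdap$, Theorem~\hyperlink{targ_thm_one}{1}(B) identifies the minimal boundary face of $\Lambdaiep{i}{e}$ containing $x$ as the ray $\{\alpha x : \alpha \geq 0\}$, with $x$ a smooth boundary point ($\ppie{i+1}{e}(x) > 0$ ensures $D\ppie{i}{e}(x) \neq 0$). I would then introduce the height function $L(s)$ defined near $s=0$ by $\ppie{i}{e}(x + sv + L(s)e) = 0$ with $L(0) = 0$, analytic via the IFT since $\partial_t \ppie{i}{e}(x+te)|_{t=0} = \ppie{i+1}{e}(x) > 0$. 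Convexity of $\Lambdaiep{i}{e}$ forces $L$ to be convex with $L'(0) = -D\ppie{i}{e}(x)[v]/\ppie{i+1}{e}(x) = 0$, so $L \geq 0$ near $0$. The subcase $L \equiv 0$ places the segment $\{x + sv\}$ inside $\partial\Lambdaiep{i}{e}$ and hence inside the minimal face, giving $v \in \operatorname{span}(x)$; combined with $Av = 0$ and $Ax = b \neq 0$ this forces $v = 0$, a contradiction. The delicate remaining subcase, $L \not\equiv 0$ with $L''(0) = 0$, must be excluded by exploiting the algebraic structure of the bivariate polynomial $(s,t) \mapsto \ppie{i}{e}(x + sv + te)$, whose roots in $t$ are real for every $s$ by hyperbolicity in direction $e$; this is the hardest step of the argument. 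Once settled, $L''(0) > 0$ yields $D^2 \ppie{i}{e}(x)[v,v] = -\ppie{i+1}{e}(x)\,L''(0) < 0$, establishing (ii).

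With (i) and (ii) in hand, the IFT produces an analytic map $e' \mapsto (x(e'), y^*(e'), \lambda(e'))$ on a neighborhood $U$ of $e$ in the affine space $\{e' : Ae' = b\}$. Shrinking $U$ preserves the open conditions $e' \in \Lambdapp$, $\ppie{j}{e'}(x(e')) > 0$ for $j > i$, and $x(e') \notin \Lambdap$ (via closedness of $\Lambdap$ together with $x \notin \Lambdap$); KKT sufficiency then yields $x(e') \in \optie{i}{e'}$ with $x(e') \notin \opt$, so $e' \in \swath(i) \setminus \core(i)$ and $x(e') = \xie{i}{e'}$. This simultaneously establishes openness of $\swath(i) \setminus \core(i)$ in the relative topology of $\relint(\feas)$ and analyticity of $e' \mapsto \xie{i}{e'}$ on it.
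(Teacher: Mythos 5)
Your overall architecture is the same as the paper's: characterize $\xie{i}{e}$ via first-order optimality conditions for the single locally-active constraint $\ppie{i}{e}(\cdot)\geq 0$, verify a constraint-qualification and a strict second-order condition, and invoke the analytic implicit function theorem to get both openness and analyticity at once. Your LICQ verification, the reduction of invertibility of the KKT Jacobian to conditions (i) and (ii), and the final shrinking-of-neighborhoods step are all sound and match the paper (which packages the IFT application as Proposition~\ref{t.e.b} and the local single-constraint description as Theorem~\ref{t.e.c}(A)).

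However, there is a genuine gap at exactly the point you flag yourself: condition (ii), the strict negativity $D^2\ppie{i}{e}(x)[v,v]<0$ for nonzero $v\in\ker A\cap\ker D\ppie{i}{e}(x)$. Your height-function argument correctly disposes of the subcase $L\equiv 0$ (via Theorem~\hyperlink{targ_thm_one}{1}(B) and $Ax=b\neq 0$), but the remaining subcase --- $L\not\equiv 0$ with $L''(0)=0$, i.e.\ a non-radial tangent direction in which the boundary of $\Lambdaiep{i}{e}$ has vanishing second-order curvature --- is precisely where the substance lies, and ``must be excluded by exploiting the algebraic structure of the bivariate polynomial'' is a statement of intent, not a proof. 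Convexity alone cannot rule it out (convex boundaries can be flat to second order without being flat), so a genuinely hyperbolicity-specific argument is required. The paper does not prove this in-house either: it cites Theorem~14 of \cite{renegar} (restated here as Theorem~\ref{t.e.c}(C)), which asserts exactly that $D^2\ppie{i}{\bar e}(\bar x)[v,v]<0$ for every $v$ tangent at $\bar x\in(\partial\Lambdaiep{i}{\bar e})\setminus\Lambdap$ that is not a scalar multiple of $\bar x$, for $i\leq n-2$. Either cite that result or supply the real-rootedness argument in full; as written, the central analytic fact your proof rests on is asserted rather than established.
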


To establish the theorem, we introduce another theorem and a proposition.
The proof of the proposition is left to the reader, as it follows entirely standard lines, and is primarily an application of the Implicit Function Theorem to optimality conditions (see, for example, \S2.4 of \cite{fm} for similar results).
\begin{prop}  \label{t.e.b}
Assume $ f: {\mathcal E}_1 \times {\mathcal E}_2 \rightarrow \mathbb{R} $ is analytic, where $ {\mathcal E}_1 $ and $ {\mathcal E}_2 $ are Euclidean spaces.  For $ z \in {\mathcal E}_2 $, define $ f_z: {\mathcal E}_1 \rightarrow \mathbb{R} $ by $ f_z(y) := f(y,z) $, and consider for some linear functional $ y \mapsto d^*y $ the following family of optimization problems parameterized by $ z $:
\begin{equation}  \label{e.e.a}
  \begin{array}{rl}
\min_y & d^*y \\
\textrm{s.t.} & f_z(y) \geq 0 \; .  \end{array} 
\end{equation}

For some $ \bar{z} $, assume $ \bar{y} $ is a local optimum with the properties that $ Df_{\bar{z}} ( \bar{y}) \neq 0 $, and $ D^2 f_{\bar{z}}( \bar{y})[v,v] < 0 $ for all $ v \neq 0 $ satisfying $ D f_{ \bar{z}}( \bar{y})[v] = 0 $. Then   there exists an analytic function $ z \mapsto y_z $ defined on an open neighborhood of $ \bar{z} $,  and possessing the properties $ y_{ \bar{z}} = \bar{y} $ and for each $ z $ in the neighborhood, the point $ y_z $ is locally optimal for (\ref{e.e.a}).
\end{prop}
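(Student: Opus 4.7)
The plan is to write out the first-order (KKT) optimality conditions for the constrained problem, apply the analytic Implicit Function Theorem to solve them as $z$ varies, and then invoke second-order sufficient conditions to upgrade the critical points produced to local minimizers.

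First I would dispose of two degenerate cases. If $f_{\bar z}(\bar y)>0$ then $\bar y$ lies in the interior of the feasible set, and being a local minimum of the linear functional $d^*y$ there forces $d^*=0$; the constant choice $y_z:=\bar y$ is then feasible for $z$ near $\bar z$ by continuity of $f_z(\bar y)$ in $z$, hence trivially locally optimal. Likewise if $d^*=0$, $y_z:=\bar y$ suffices. So I may assume $f_{\bar z}(\bar y)=0$ and $d^*\neq 0$. The Lagrange condition for a local minimizer of $d^*y$ subject to $f_{\bar z}(y)\ge 0$, combined with the nonvanishing of $Df_{\bar z}(\bar y)$, yields a unique multiplier $\bar\lambda\ge 0$ with $d^*=\bar\lambda\,Df_{\bar z}(\bar y)$, and $d^*\neq 0$ then forces $\bar\lambda>0$.

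Next, introduce the analytic map
\[
F(y,\lambda,z)\;:=\;\bigl(d^*-\lambda\, Df_z(y),\ f_z(y)\bigr),
\]
which vanishes at $(\bar y,\bar\lambda,\bar z)$, and compute its partial Jacobian in $(y,\lambda)$ at that point:
\[
(v,\mu)\ \longmapsto\ \bigl(-\bar\lambda\, D^2f_{\bar z}(\bar y)[v,\,\cdot\,]-\mu\, Df_{\bar z}(\bar y),\ \ Df_{\bar z}(\bar y)[v]\bigr).
\]
The crux is showing this ``bordered Hessian'' is injective (and hence invertible, being square and finite-dimensional). If both components vanish, then $Df_{\bar z}(\bar y)[v]=0$; pairing the first component against $v$ gives $\bar\lambda\, D^2f_{\bar z}(\bar y)[v,v]=0$, and since $\bar\lambda>0$ and $v\in\ker Df_{\bar z}(\bar y)$, the strict negative-definiteness hypothesis forces $v=0$. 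Then $\mu\, Df_{\bar z}(\bar y)=0$ yields $\mu=0$ because $Df_{\bar z}(\bar y)\neq 0$. This is the only place where both structural hypotheses of the proposition are used simultaneously, and it is the main technical point.

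With invertibility in hand, the analytic Implicit Function Theorem produces an analytic branch $z\mapsto(y_z,\lambda_z)$ on some neighborhood of $\bar z$ satisfying $F(y_z,\lambda_z,z)=0$ and $(y_{\bar z},\lambda_{\bar z})=(\bar y,\bar\lambda)$. On a possibly smaller neighborhood, continuity preserves $\lambda_z>0$, $Df_z(y_z)\neq 0$, and $D^2f_z(y_z)[v,v]<0$ for every nonzero $v\in\ker Df_z(y_z)$, which are the standard second-order sufficient conditions guaranteeing that $y_z$ is a (strict) local minimizer of $d^*y$ subject to $f_z(y)\ge 0$. Everything beyond the bordered-Hessian calculation is routine bookkeeping around KKT and continuity of the second-order data, which is why the author is content to outsource the proof.
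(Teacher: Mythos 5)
Your proposal follows exactly the route the paper has in mind: the paper omits the proof, describing it as ``primarily an application of the Implicit Function Theorem to optimality conditions'' with a pointer to Fiacco--McCormick, and your argument --- the KKT system $d^*=\lambda\,Df_z(y)$, $f_z(y)=0$, invertibility of the bordered Hessian (using $\bar\lambda>0$ together with the two nondegeneracy hypotheses), the analytic Implicit Function Theorem, and persistence of strict complementarity and of the second-order sufficient condition for nearby $z$ --- is precisely that standard sensitivity argument. In the main case $d^*\neq 0$, $f_{\bar z}(\bar y)=0$ it is correct; the only step stated a bit tersely is the openness of the condition ``$D^2 f_z(y_z)[v,v]<0$ for all nonzero $v\in\ker Df_z(y_z)$,'' which needs the usual unit-sphere compactness/subsequence remark, but that is routine.

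One sentence of your degenerate-case cleanup is false as written: ``if $d^*=0$, $y_z:=\bar y$ suffices.'' When $d^*=0$ and the constraint is active the constant map need not remain feasible: take $\mathcal{E}_1=\mathcal{E}_2=\mathbb{R}$, $f(y,z)=y-z$, $\bar y=\bar z=0$, $d^*=0$; the hypotheses hold (the second-order condition is vacuous since $\ker Df_{\bar z}(\bar y)=\{0\}$), yet $f_z(\bar y)=-z<0$ for $z>0$. The repair is one line: since $Df_{\bar z}(\bar y)\neq 0$, choose $u$ with $Df_{\bar z}(\bar y)[u]\neq 0$ and apply the analytic Implicit Function Theorem to solve $f(\bar y+tu,z)=0$ for an analytic $t(z)$ with $t(\bar z)=0$; then $y_z:=\bar y+t(z)u$ is feasible, hence (as $d^*=0$) locally optimal. (In the paper's application this case never arises, because there $d^*$ is the projection of $c^*$ onto $\{x:Ax=0\}$ and the standing assumption $c^*\notin\mathrm{range}(A^*)$ forces $d^*\neq 0$.)
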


The following theorem collects results from \cite{renegar}. (Keep in mind that, as was emphasized in \S\ref{s.d}, our standard assumptions are always assumed to be in effect.  The relevant assumption below is regularity of $ \Lambdap $, but this assumption is immaterial for  parts (A) and (B).)

\begin{thm}  \label{t.e.c}
 Assume $ 1 \leq i \leq n-1 $, $ \bar{e} \in \Lambdapp $   and $ \bar{x}   \in ( \partial \Lambdaiep{i}{ \bar{e} }) \setminus \Lambdaiep{i-1}{ \bar{e}}  $ (equivalently, by Theorem~\ref{t.d.a}(A), $ \bar{x}   \in ( \partial \Lambdaiep{i}{ \bar{e} }) \setminus \Lambdap $).  
\begin{enumerate}

\item There exist open neighborhoods $ U $ of $ \bar{e}  $, and $ V $ of $ \bar{x}  $, with the property that
\[  e \in U \quad \Rightarrow \quad  \Lambdaiep{i}{e} \cap V = \{ x \in V: \ppie{i}{e}( x ) \geq 0 \} \; . \]

\item In a neighborhood of $ \bar{x}  $, $ \partial \Lambdaiep{i}{ \bar{e} } $ is a manifold, whose tangent space at $ \bar{x}  $ is $ \{ v: D\ppie{i}{\bar{e} }(\bar{x} )[v] = 0 \} $.

\item Further assume $ i \leq n-2 $. Then $ D^2 \ppie{i}{\bar{e} }(\bar{x} )[v,v] < 0 $ for all vectors $ v $ that both satisfy $ D \ppie{i}{\bar{e} }(\bar{x} )[v]= 0 $ and are not scalar multiples of $ \bar{x}  $. 

\end{enumerate}
\end{thm}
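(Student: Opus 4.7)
The plan is to reduce (A) and (B) to the Implicit Function Theorem applied to the single polynomial $\ppie{i}{e}$, and then to tackle (C) by a second-order analysis along a boundary curve, with the strict inequality being the main obstacle and requiring the extremality guaranteed by Theorem~\ref{t.d.a}(E).

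For (A), I apply Corollary~\ref{t.d.b}: because $\bar{x}\in(\partial\Lambdaiep{i}{\bar{e}})\setminus\Lambdap$, the boundary face of $\Lambdaiep{i}{\bar{e}}$ through $\bar{x}$ is a single ray inside $\Lambdaiepp{i+1}{\bar{e}}\setminus\Lambdaiep{i-1}{\bar{e}}$, so by (\ref{e.c.b}) we have $\ppie{j}{\bar{e}}(\bar{x})>0$ for every $j=i+1,\dots,n-1$. Joint continuity of $(e,x)\mapsto\ppie{j}{e}(x)$ extends each strict inequality to a product neighborhood $U\times V$ of $(\bar{e},\bar{x})$, and then (\ref{e.c.a}) collapses membership in $\Lambdaiep{i}{e}\cap V$ to the single constraint $\ppie{i}{e}(x)\ge 0$. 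For (B), the recursion $\ppie{i+1}{e}(x)=D\ppie{i}{e}(x)[e]$ together with $\bar{x}\in\Lambdaiepp{i+1}{\bar{e}}$ gives $D\ppie{i}{\bar{e}}(\bar{x})[\bar{e}]=\ppie{i+1}{\bar{e}}(\bar{x})>0$, so the gradient is nonzero. Combined with (A), the Implicit Function Theorem presents $\partial\Lambdaiep{i}{\bar{e}}\cap V$ as a real-analytic codimension-one manifold with tangent space $\ker D\ppie{i}{\bar{e}}(\bar{x})$.

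For (C), introduce the auxiliary polynomial $\psi(t,s):=\ppie{i}{\bar{e}}(\bar{x}+tv+s\bar{e})$. Since $\partial_s\psi(0,0)=\ppie{i+1}{\bar{e}}(\bar{x})>0$, the Implicit Function Theorem yields a real-analytic $s(t)$ with $\psi(t,s(t))\equiv 0$ and $s(0)=0$. Differentiating twice and using $D\ppie{i}{\bar{e}}(\bar{x})[v]=0$ produces $s'(0)=0$ and
\[ s''(0) \; = \; -\frac{D^2\ppie{i}{\bar{e}}(\bar{x})[v,v]}{\ppie{i+1}{\bar{e}}(\bar{x})}, \]
so the target inequality becomes $s''(0)>0$. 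Identifying $s(t)$ with $-\lambda_1(\bar{x}+tv)$, where $\lambda_1$ is the smallest $\bar{e}$-eigenvalue of $\ppie{i}{\bar{e}}$ (a real-analytic simple branch near $\bar{x}$ thanks to $\ppie{i+1}{\bar{e}}(\bar{x})\neq 0$), the classical concavity of $\lambda_1$ on $\mathcal{E}$ puts $\lambda_1(\bar{x}+tv)\le 0$ near $t=0$, so $s(t)\ge 0$ and $s''(0)\ge 0$. This establishes the weak inequality $D^2\ppie{i}{\bar{e}}(\bar{x})[v,v]\le 0$.

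The strict inequality is the main obstacle. I would argue by contradiction: if $s''(0)=0$ for some tangent $v\notin\mathbb{R}\bar{x}$, then the analytic boundary curve $\gamma(t):=\bar{x}+tv+s(t)\bar{e}\subset\partial\Lambdaiep{i}{\bar{e}}$ satisfies $\gamma''(0)=0$, i.e.\ osculates the line $\bar{x}+\reals v$ to second order inside the boundary. Since $\bar{x}\notin\Lambdap$, Theorem~\ref{t.d.a}(E) identifies the face of the regular cone $\Lambdaiep{i}{\bar{e}}$ through $\bar{x}$ as the single extreme ray $\mathbb{R}_+\bar{x}$, so no flat boundary piece through $\bar{x}$ can be transverse to $\bar{x}$. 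To convert the second-order osculation into such a forbidden flat piece, I would combine the self-concordant barrier inequality for $-\ln\ppie{i}{\bar{e}}$ at the interior perturbations $\bar{x}+\epsilon\bar{e}\in\Lambdaiepp{i}{\bar{e}}$ -- whose Hessian has null space exactly $\mathbb{R}(\bar{x}+\epsilon\bar{e})$ -- with a careful Taylor expansion of $\ppie{i}{\bar{e}}$ in $\epsilon$ along $\gamma$. The delicate bookkeeping required to turn $D^2\ppie{i}{\bar{e}}(\bar{x})[v,v]=0$ into the conclusion $v\in\mathbb{R}\bar{x}$ is precisely what the structural machinery of \cite{renegar} works out in detail.
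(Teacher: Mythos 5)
Parts (A) and (B) of your proposal follow the paper's own route: Theorem~\ref{t.d.a}/Corollary~\ref{t.d.b} give $\bar{x}\in\Lambdaiepp{i+1}{\bar{e}}$, hence $\ppie{j}{\bar{e}}(\bar{x})>0$ for $j\geq i+1$, joint continuity spreads this to a product neighborhood, (\ref{e.c.a}) collapses the description to the single inequality $\ppie{i}{e}(x)\geq 0$, and $D\ppie{i}{\bar{e}}(\bar{x})[\bar{e}]=\ppie{i+1}{\bar{e}}(\bar{x})>0$ gives a nonvanishing gradient. One small loose end: Corollary~\ref{t.d.b} is stated only for $i\leq n-2$, so the case $i=n-1$ (allowed in the theorem) needs the separate, trivial remark that $\ppie{n-1}{e}$ is linear, as the paper notes. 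Your derivation of the \emph{weak} inequality in (C) via the implicit branch $s(t)$ and the concavity of the minimal $\bar{e}$-eigenvalue is correct and is actually more self-contained than the paper at that point.

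The genuine gap is the strict inequality in (C), which is the entire content of the step. The paper's proof is a one-line citation: (C) \emph{is} Theorem 14 of \cite{renegar} applied with $\Lambdaiep{i-1}{e}$ in the role of $\Lambdap$ (regularity supplied by Theorem~\ref{t.d.a}(D)). Your proposed contradiction argument does not close. First, knowing $s''(0)=0$ only says the boundary osculates the line $\bar{x}+\reals v$ to second order; since $s$ could behave like $t^4$, this does not produce a line segment in $\partial\Lambdaiep{i}{\bar{e}}$, so the extreme-ray structure from Theorem~\ref{t.d.a}(E) cannot be invoked against it. Second, the claim that the Hessian of $-\ln\ppie{i}{\bar{e}}$ at an interior point has null space $\reals(\bar{x}+\epsilon\bar{e})$ is false: for a regular hyperbolicity cone that barrier is nondegenerate (its Hessian is positive definite; indeed $D^2(-\ln\ppie{i}{\bar{e}})(x)[x,x]=n-i>0$), so the degeneracy you want to exploit is not there. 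Ending with ``the delicate bookkeeping \ldots is precisely what \cite{renegar} works out'' amounts to citing the statement you set out to prove; you should either cite Theorem 14 of \cite{renegar} outright, as the paper does, or supply an actual argument for strictness.
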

\begin{proof}  
Recall the characterization (\ref{e.c.a}), that is,
\begin{equation}  \label{e.e.b}
  \Lambdaiep{i}{e} = \{ x: \ppie{j}{e}(x) \geq 0 \textrm{ for all $ j=i, \ldots, n-1 $} \}. 
\end{equation}

  For $ i = n-1 $, result (B) is trivial  because $ x \mapsto \ppie{n-1}{e}(x) $ is a linear functional, and (A) is immediate with the additional observation that the linear functional varies continuously in $ e $.  Henceforth we assume $ 1 \leq i \leq n-2 $. 

Theorem~\ref{t.d.a}(A) gives
\[ 
 \bar{x}  \in (\partial \Lambdaiep{i}{\bar{e}  }) \setminus \Lambdap \quad \Rightarrow \quad \bar{x}   \in  \Lambdaiepp{i+1}{ \bar{e} } \; ,  
\]  
and hence, 
\begin{equation}  \label{e.e.c}
  \ppie{j}{\bar{e}}( \bar{x})  > 0 \quad  \textrm{for $ j = i+1, \ldots, n-1$} \; . 
\end{equation}
   Thus, since the polynomials vary continuously in $ e $ as well as in $ x $, there exist open neighborhoods $ U $ of $ \bar{e} $, and $ V $ of $ \bar{x} $, for which
\[   (e,x) \in U \times V \quad \Rightarrow \quad \ppie{j}{e}(x) > 0 \textrm{ for all $ j = i+1 , \ldots, n-1 $} \; . \]
This and (\ref{e.e.b})  establish statement (A) of the present theorem.

In light of (A), to establish (B) we need only show $ D \ppie{i}{ \bar{e}}( \bar{x}) \neq 0 $.  However, $ D \ppie{i}{ \bar{e}}( \bar{x})[\bar{e}] = \ppie{i+1}{ \bar{e}}( \bar{x}) > 0 $ (by (\ref{e.e.c}); hence, $  D \ppie{i}{ \bar{e}}( \bar{x}) \neq 0 $.     

Finally, (C) is a restatement of Theorem 14 in \cite{renegar}, with $ \Lambdaiep{i-1}{e} $ substituted for $ \Lambdap $ (the hypothesis of regularity is satisfied due to Theorem~\ref{t.d.a}(D)).   \end{proof}

\noindent 
{\bf  {\em Proof of Theorem~\ref{t.e.a}.}} Assume $ 1 \leq i \leq n-2 $ and $ e \in \swath(i) \setminus \core(i) $. 

We claim
\begin{equation}  \label{e.e.d}
 D \ppie{i}{e}(\xie{i}{e})[e - \xie{i}{e}] \neq 0 \; . 
\end{equation}
Indeed, as $ \xie{i}{e} \in ( \partial \Lambdaiep{i}{e}) \setminus \Lambdaiep{i-1}{e} $ (using (\ref{e.d.b}), the tangent space of $ \Lambdaiep{i}{e} $ at $ \xie{i}{e} $ is precisely $ \{ v: D\ppie{i}{e}(\xie{i}{e})[v] = 0 \} $, by Theorem~\ref{t.e.c}(B).  However, $ e $ is in the interior of the convex cone $ \Lambdaiep{i}{e} $, and hence the difference $ e - \xie{i}{e} $ cannot be in the tangent space.  The claim is thus established.

Let $ \bar{{\mathcal E}}   := \{ x: Ax = 0 \} $, a Euclidean (sub)space.  Define $ f: \bar{{\mathcal E}} \times \bar{{\mathcal E}} \rightarrow \mathbb{R} $ by $ f(y,z) = \ppie{i}{z+e}(y+e) $, and let $ f_z: \bar{{\mathcal E}} \rightarrow \mathbb{R}   $ be the function $ f_z(y) :=  f(y,z) $.  

Let $ d^* $ be the projection of $ c^* $ onto $ \bar{{\mathcal E}} $ (i.e., the linear functional on $ \bar{{\mathcal E}} $ satisfying $ d^*y = c^*y $ for all $ y \in \bar{{\mathcal E}} $).  For any $ x $ satisfying $ Ax = b $, the projection of $ D\ppie{i}{e}(x) $ onto $ \bar{{\mathcal E}} $ is $ D f_0(y) $ where $ y = x - e $.  In particular, for all $ v \in \bar{{\mathcal E}} $,
\[  D \ppie{i}{e}( \xie{i}{e})[v] = D f_0( \bar{y})[v] \quad \textrm{where $ \bar{y} := \xie{i}{e} - e$} \; . \]
Since $ e - \xie{i}{e} \in \bar{{\mathcal E}} $, (\ref{e.e.d})  thus implies $ D f_0( \bar{y}) \neq 0 $. 

Similarly, for all $ v \in \bar{{\mathcal E}} $,
\[     D^2 f_0( \bar{y})[v,v] = D^2 \ppie{i}{e}(\xie{i}{e})[v,v] \; . \]
Thus, since $ \xie{i}{e} \notin \bar{{\mathcal E}} $ (because $ b \neq 0 $, by assumption), Theorem~\ref{t.e.c}(C) implies $ D^2 f_0( \bar{y})[v,v] < 0 $ for all $ 0 \neq v \in \bar{{\mathcal E}} $ satisfying $ D f_0( \bar{y})[v] = 0 $.

In all, the hypotheses of Proposition~\ref{t.e.b}  are satisfied for $ \bar{z} = 0 $ and $ \bar{y} $.  Letting $ z \mapsto y_z $ be the analytic function whose existence is ensured by the proposition, it is readily apparent that $ y_z + e $ is locally optimal for
\[ \begin{array}{rl}
      \min_x & c^* x \\
     \textrm{s.t.} & Ax = b \\
            & \ppie{i}{z+e}(x) \geq 0 \; . \end{array} \]
Since $ z \mapsto y_z $ is continuous (it's even analytic), it thus follows from Theorem~\ref{t.e.c}(A) that for $ z $ in an open neighborhood of the origin in $ \bar{{\mathcal E}} $, the point $ y_z + e $ is locally optimal -- and hence globally optimal -- for the convex optimization problem $ \hpie{i}{z+e} $ -- that is, $ y_z + e \in \optie{i}{z+e} $, and so $ z+e \in \swath(i) $.  

However, for $ z $ in a possibly smaller open neighborhood of the origin, $ y_z + e  \notin \Lambdap $, because $ y_0 + e = \xie{i}{e} \notin \Lambdap $ and $ z \mapsto y_z $ is continuous.  Consequently, for $ z $ in this open neighborhood of the origin, $ \optie{i}{z+e} \setminus \Lambdap \neq \emptyset $, and hence, $ e \notin \core(i) $; thus, $ z \in \swath(i) \setminus \core(i) $, and clearly, $ \xie{i}{z+e} = y_z + e $.

As $ e $ was an arbitrary point in $ \swath(i) \setminus \core(i) $, the proof is complete. \hfill $ \Box $   \vspace{2mm}

In closing this section, we recall that due simply to the strict curvature of (regular) second order cones, for every $ e \in \swath(n-2) $ there is a unique optimal solution of $ \hpie{n-2}{e} $, thus naturally extending the map $ e \mapsto \xie{n-2}{e} $ to all of $ \swath(n-2) $. Additionally, every $ \bar{e} \in \Lambdapp $ and every  $ 0 \neq x  \in \partial \Lambdaiep{n-2}{\bar{e} } $ satisfies properties (A), (B) and (C) of Theorem~\ref{t.e.c}  when $ i = n-2 $  (regardless of whether $ \bar{x} $ satisfies the theorem's hypothesis $ \bar{x} \notin \Lambdap $).  Consequently, the proof of Theorem~\ref{t.e.a}  easily is made to be a proof showing that the extended map $ e \mapsto \xie{n-2}{e} $ is analytic on $ \swath(n-2) $ (and $ \swath(n-2) $ is open in the relative topology of $ \relint(\feas) $).  

\section{{\bf Proof of Theorem 5}}  \label{s.f}

We have been proving the theorems of \S\ref{s.b}  in the order they appeared, and have just finished establishing Theorem 3, that is, have just finished establishing the well-definedness of the trajectories arising from the differential equation
\[   \smfrac{d}{dt} e(t) = \xie{i}{e(t)} - e(t) ,\quad e(0) \in \swath(i) \setminus \core(i) \; . \]
To continue proving theorems in the order they appeared, we would next prove Part~I of the Main Theorem, which states that either the trajectory $ t \mapsto e(t) $ or the path $ t \mapsto \xie{i}{e(t)} $ converges to optimality (perhaps both).  However, there is groundwork to be laid before proving that result, so we jump to the one stated after it, a motivational theorem pertaining to the trajectories in the special case $ i = n-2 $.  

For any $ e $, the polynomial $ x \mapsto \ppie{n-2}{e}(x) $ is quadratic, and hence for $ e \in \Lambdapp $, $ \Lambdaiep{n-2}{e} $ is a second-order cone, regularity due to Theorem~\ref{t.d.a}(D).

Recall we say that $ t \mapsto e(t) $ $ ( 0 \leq t < T) $ is a ``maximal trajectory'' for the dynamics $ \dot{e}(t) = \xie{i}{e(t)} - e(t) $ if $ e(0) \in \swath(i) \setminus \core(i) $ and $ T $ ($ = T(e(0)) $) is the time when the trajectory either reaches the boundary of $ \swath(i) \setminus \core(i) $ or escapes to infinity.

Following is a restatement and proof of Theorem \hyperlink{targ_thm_five}{5}, whose only purposes in \S\ref{s.b}  were to motivate the choice of terminology ``{\em central} swaths'' and to give insight into the origin of the idea that in general the trajectories $ t \mapsto e(t) $ (or paths $ t \mapsto \xie{i}{e(t)} $) converge to optimality.  After completing the paper, the reader likely will not have much difficulty in making the statement of the theorem more complete.\footnote{More specifically, after understanding the paper the reader likely will not have much difficulty in showing the following, in which 
\begin{align*}
   \cpath & = \{ z( \eta ): \eta > 0 \} \; ,  \\
  \ecpath & = \{ z( \eta ): \eta \in \mathbb{R}  \}
\end{align*}
(``extended central path''), and $ z( \eta ) $ solves $ \min_x \eta \, c^* x - \ln p(x) $, s.t., $ Ax = b $, $ x \in \Lambdapp $: 
\begin{itemize}

\item $ \cpath  \subseteq  \swath(n-2) $  
\item If $ e(0) \in \ecpath \cap (\swath(n-2) \setminus \core(n-2)) $, then $ T(e(0)) = \infty $ and  
\[  \{ e(t): 0 \leq t < \infty  \} = \{ e \in \ecpath : c^* e \leq c^* e(0) \} \; . \]

\item If $ e \in \ecpath \cap \core(n-2) $ then 
\[ \core(n-2) = \ell \cap \Lambdapp = \ecpath \; , \]
  where $ \ell $ is the line containing $ e $ and the (unique) point in $ \optie{n-2}{e} = \opt $.  

\end{itemize}
 }
\begin{thm}  \label{t.f.a}
Assume $ t \mapsto e(t) $ $ (0 \leq t < T) $ is a maximal trajectory arising from the dynamics $ \dot{e}(t) = \xie{n-2}{e(t)} - e(t) $, starting at $ e(0) \in \swath(n-2) \setminus \core(n-2) $. \begin{center} If $ e(0) \in \cpath $ then $   \{ e(t): 0 \leq t < T \} \subseteq \cpath $. \end{center} 
\end{thm}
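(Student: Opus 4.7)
I will reduce Theorem~\ref{t.f.a} to the infinitesimal tangency statement: at every $e \in \cpath \cap (\swath(n-2)\setminus\core(n-2))$ the vector $\xie{n-2}{e} - e$ is tangent to $\cpath$ at $e$. Granting this, note that $F(e) := \xie{n-2}{e} - e$ is analytic on $\swath(n-2)\setminus\core(n-2)$ by Theorem~\ref{t.e.a}, and $\cpath$ is an analytic $1$-dimensional submanifold of $\relint(\feas)$ (the implicit function theorem applies to the first-order characterization in the proof of Theorem~\ref{t.d.d}, thanks to positive-definiteness of the barrier Hessian on $\bar{\mathcal E} := \ker A$). Tangency lets $F$ descend to a smooth vector field on $\cpath$, whose integral curve through $e(0)$ is a curve in $\cpath$ that also solves the ambient ODE $\dot e = F(e)$; uniqueness of ODE solutions then forces it to equal the maximal trajectory on $[0, T)$.

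\textbf{The key algebraic identity.} Set $x := \xie{n-2}{e}$ and $w := x - e \in \bar{\mathcal E}$. From the proof of Theorem~\ref{t.d.d}, the hypothesis $e \in \cpath$ yields some $\alpha > 0$ with $Dp(e)|_{\bar{\mathcal E}} = \alpha\,c^*|_{\bar{\mathcal E}}$. On the other hand, (\ref{e.a.c}) gives $x \in \Lambdaiepp{n-1}{e}\cap\partial\Lambdaiep{n-2}{e}$, so by (\ref{e.c.a}) the only active constraint of $\hpie{n-2}{e}$ at $x$ is $\ppie{n-2}{e}(x) \ge 0$; the KKT condition, together with $c^* \notin \range A^*$, produces some $\mu > 0$ with $D\ppie{n-2}{e}(x)|_{\bar{\mathcal E}} = \mu^{-1} c^*|_{\bar{\mathcal E}}$. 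Since $\ppie{n-2}{e}$ is a homogeneous quadratic, $D\ppie{n-2}{e}(x)[\cdot] = D^2\ppie{n-2}{e}(e)[x,\cdot]$, and (\ref{e.c.g}) at $i=n-2,\,k=2$ identifies $D^2\ppie{n-2}{e}(e) = (n-2)!\,D^2 p(e)$. Combining the two first-order conditions yields a positive scalar $\gamma$ with
\[
D^2 p(e)[x,v] \;=\; \gamma\,Dp(e)[v] \qquad\text{for every } v \in \bar{\mathcal E}.
\]
Using Euler's identity $D^2 p(e)[e,v] = (n-1)\,Dp(e)[v]$ (since $Dp$ is homogeneous of degree $n-1$), subtracting gives $D^2 p(e)[w,v] = (\gamma-(n-1))\,Dp(e)[v]$ for every $v \in \bar{\mathcal E}$.

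\textbf{From the identity to tangency.} Differentiating the defining relation $\eta c^* = Dp(z(\eta))/p(z(\eta)) + y^*(\eta)A$ of $\cpath$ in $\eta$ and restricting to $\bar{\mathcal E}$ shows that the tangent direction $\dot{z}(\eta)$ is characterized by the linear equation
\[
\hess(-\ln p)(e)[\dot z,v] \;\in\; \spann\{c^*|_{\bar{\mathcal E}}\} \qquad\text{for every } v \in \bar{\mathcal E},
\]
the nondegeneracy of $\hess(-\ln p)(e)|_{\bar{\mathcal E}}$ (positive-definiteness of the barrier Hessian on the regular cone $\Lambdap$) ensuring the solution space is exactly $1$-dimensional. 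Now $\hess(-\ln p)(e)[u,v] = -D^2 p(e)[u,v]/p(e) + Dp(e)[u]\,Dp(e)[v]/p(e)^2$, so plugging in $w$ and using both the identity from the previous paragraph and the fact that $Dp(e)|_{\bar{\mathcal E}}$ is a multiple of $c^*|_{\bar{\mathcal E}}$, one sees $\hess(-\ln p)(e)[w,\cdot]|_{\bar{\mathcal E}}$ is a multiple of $c^*|_{\bar{\mathcal E}}$. Hence $w$ is a scalar multiple of $\dot{z}(\eta)$, establishing the tangency and completing the reduction.

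\textbf{Main obstacle.} The delicate step is bookkeeping the combinatorial constants in (\ref{e.c.g}) to cleanly convert the KKT condition for $\hpie{n-2}{e}$ into the identity $D^2p(e)[x,v] = \gamma\,Dp(e)[v]$, and justifying that the linearized central-path equation really captures the tangent direction (an appeal to strict convexity of the self-concordant barrier $-\ln p$). Once these are in place, the passage from infinitesimal tangency to a trajectory confined to $\cpath$ via ODE uniqueness is routine.
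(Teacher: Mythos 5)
Your proposal is correct and follows essentially the same route as the paper: both reduce the theorem to showing that $\xie{n-2}{e}-e$ is tangent to the central path at $e=z(\eta)$ (using the first-order condition for $z(\eta)$, its derivative in $\eta$, the KKT conditions for $\hpie{n-2}{e}$, and the identity (\ref{e.c.g}) converting $D\ppie{n-2}{e}$ into $D^2p(e)$), and then conclude by an ODE/flow-uniqueness argument. The only difference is the direction of the verification --- you start from the known optimum $\xie{n-2}{e}$ and show $w=\xie{n-2}{e}-e$ solves the linearized central-path equation (pinning down the direction via positive-definiteness of the barrier Hessian on $\ker A$), whereas the paper constructs $z+t\dot z$ for a suitable $t>0$ and verifies it satisfies the optimality conditions of $\hpie{n-2}{z}$ --- a cosmetic rather than substantive distinction.
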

\begin{proof} The proof makes use of results known to anyone familiar with the interior-point method literature.

The central path is the set $ \{ z( \eta ): \eta > 0 \} $, where $ z( \eta ) $ solves
\[ \begin{array}{rl}
  \min_x & \eta \, c^* x - \ln p(x) \\
 \textrm{s.t.} & Ax = b \\
            & x \in \Lambdapp . \end{array} \]
In the terminology of Nesterov and Nemirovski \cite{nn}, the barrier function $ x \mapsto - \ln p(x) $ is self-concordant, a fact whose implications were first developed by G\"uler \cite{guler}.
 
The barrier function is strictly convex on its domain, $ \Lambdapp $ (``strictly'' because $ \Lambdap $ is a regular cone).  Hence, for each $ \eta > 0 $, there is at most one optimizer, $ z( \eta ) $.  It is well-known from interior-point method theory that if the optimizer $ z( \eta ) $ exists for one positive value of $ \eta $, then an optimizer $ z( \eta ) $ exists for all $ \eta > 0 $, and the path $ \eta \mapsto z( \eta ) $ is analytic (using that $ x \mapsto -\ln p(x) $ is analytic).

To prove the theorem, it suffices to show that 
 if $ z( \eta ) $ lies in $  \swath(i) \setminus \core(i) $ (which we know by Theorem~\ref{t.e.a}  to be open in the relative topology of $ \relint(\feas)$), then $ \dot{z}( \eta ) $ -- the tangent vector to the central path at $ z( \eta ) $ -- is a positive multiple of the difference $ \xie{n-2}{ z( \eta )} - z( \eta ) $. Indeed, this is sufficient to ensure that if $ z( \eta_0) \in \swath(i) \setminus \core(i) $, then as $ \eta $ increases from $ \eta_0 $, the central path will remain in $  \{ e(t): 0 \leq t < T(z( \eta_0)) \} $  -- where $ e(0) = z( \eta_0) $ -- until reaching the boundary of $ \swath(i) \setminus \core(i) $, that is, will remain in the path of the trajectory until the trajectory ends, implying $ \{ e(t): 0 \leq t < T( z( \eta_0)) \}  \subseteq \cpath $.  

Now we begin proving that $ \dot{z}( \eta ) $ is indeed a positive multiple of $ \xie{n-2}{z( \eta )} - z( \eta ) $.

Assuming the central path exists, for each $ \eta > 0 $ there exists $ y(\eta )^* $ which together with $ z( \eta ) $ satisfies the first-order optimality condition  
\begin{equation}   \label{e.f.a}
\eta c^* = \smfrac{1}{p(z(\eta ))} D  p(z( \eta )) + y( \eta )^* A \; . 
\end{equation}
It is well known that if $ A $ is surjective (one of our standard assumptions), then $ y( \eta )^* $ is unique, and $ \eta \mapsto y( \eta )^* $ is analytic (using that $ x \mapsto -\ln p(x) $ is analytic).

Differentiating in $ \eta $ provides equations satisfied by $ \dot{z}( \eta ) $, the tangent vector to the central path:
\begin{equation} 
  \label{e.f.b}
  c^* = \smfrac{1}{p(z(\eta ))} D^2  p(z( \eta )) [\dot{z}( \eta )] - \smfrac{ D   p(z( \eta ))[ \dot{z}( \eta )] }{p(z( \eta ))^2} D  p(z( \eta )) + \dot{y}( \eta )^* A \; . 
\end{equation}
It is well known, moreover, that $ c^* \dot{z}( \eta ) < 0 $.
 
Using (\ref{e.f.a})  to substitute for $ D  p(z( \eta )) $ in (\ref{e.f.b}), and relying on $ A \dot{z}( \eta ) = 0 $,  yields
\begin{equation} \label{e.f.c}
 \gamma(\eta ) c^* = D^2 p( z( \eta )) [ \dot{z}(\eta ) ] + w( \eta )^* A  
\end{equation} 
for some constant $ \gamma (\eta ) $ and vector $ w( \eta )^* $.  We aim to use this condition to show that if $ z( \eta ) \in \swath(n-2) \setminus \core(n-2) $, then $ \dot{z}( \eta ) $ is a positive multiple of $ \xie{n-2}{z( \eta )} - z( \eta ) $.  To accomplish this aim, we need an appropriate characterization of $ \xie{n-2}{ z( \eta )} $, the optimal solution to $ \hpie{n-2}{z( \eta )} $.

Now, for every $ e \in \Lambdapp $, it holds that $ \Lambdaiep{n-2}{e} \cap \partial \Lambdaiep{n-1}{e} = \{ 0 \} $ (by regularity, and parts C and D of Theorem~\ref{t.d.a}).  Thus, 
$ \feasie{n-2}{e} \cap \partial \Lambdaiep{n-1}{e} = \emptyset $ 
  (because $ b \neq 0 $) and, as a consequence, 
\begin{align}
 x \in  \feasie{n-2}{e} \quad & \Rightarrow \quad  \ppie{n-1}{e}(x) > 0 \label{e.f.d} \\
                                  & \Leftrightarrow \quad D\ppie{n-2}{e}(x)[e] > 0 \nonumber \\
                                  & \Rightarrow  D\ppie{n-2}{e}(x) \neq 0 \; .  \label{e.f.e} 
\end{align}
On the other hand, by the characterization (\ref{e.c.a}) applied to $ \Lambdaiep{n-2}{e} $,
\begin{equation}  \label{e.f.f}
   \feasie{n-2}{e} = \{ x: Ax = b, \, \ppie{n-2}{e}(x) \geq 0 \textrm{ and } \ppie{n-1}{e}(x) \geq 0 \} \; . 
\end{equation}
It follows from (\ref{e.f.d}), (\ref{e.f.e}) and (\ref{e.f.f})   that necessary and sufficient conditions for a point $ x $ to be optimal for the convex optimization problem $ \hpie{n-2}{e} $ are 
\begin{equation}  \label{e.f.g} 
\left. \begin{array}{l}
  \gamma c^* = D\ppie{n-2}{e}(x) + y^*A \quad \textrm{for some $ \gamma > 0 $ and  $ y^* $} \\
                       Ax = b \\
                      x \in \partial \Lambdaiep{n-2}{e} 
\end{array}\quad  \right\} \end{equation}

Assume $ z( \eta ) \in \swath(i) \setminus \core(i) $.  For brevity, write $ z $ for $ z( \eta ) $ and $ \dot{z} $ for $ \dot{z}( \eta ) $.  Our goal is to show $ \dot{z} $ is a positive multiple of $ \xie{n-2}{ z} - z $ -- equivalently, to show $ \xie{n-2}{z} = z + t \dot{z} $ for some $ t > 0 $ -- equivalently, to show for some $ t > 0 $ that $ x = z + t \dot{z} $ satisfies the conditions (\ref{e.f.g})  when $ e = z $.

To fix the value of $ t $, consider the condition $ x \in \partial \Lambdaiep{n-2}{z} $.  To see that there exists $ t > 0 $ for which $ x = z + t \dot{z} $ is in the boundary, begin by recalling $ c^* \dot{z}  < 0 $, and hence by (\ref{e.f.a}), $  D p(z)[ \dot{z}  ]< 0 $. However, by (\ref{e.c.g}), $ Dp(z) $ is a positive multiple of $  D \ppie{n-1}{z}(z) $ -- thus, $ D \ppie{n-1}{z}(z )[\dot{z}] < 0 \ $.
Since $ x \mapsto \ppie{n-1}{z }(x) $ is linear and $ \ppie{n-1}{z }(z) > 0 $, it follows that for some $ t> 0 $, the point $  z + t \dot{z}  $ lies in $ \partial \Lambdaiep{n-1}{z} $.  By the nesting of cones, for a (smaller) value $ t > 0 $, the point $  z + t \dot{z}  $ lies in  $ \partial \Lambdaiep{n-2}{z} $.  Consider the value of $ t $ to be fixed thusly.

Of course $ x = z + t \dot{z}  $ satisfies $ Ax = b $.  To complete the proof, it remains to show there exist $ \gamma > 0 $ and $ y^* $ satisfying
\begin{equation}  \label{e.f.h}  
  \gamma c^* = D\ppie{n-2}{z}(z + t \dot{z})  + y^*A \; . 
\end{equation}
We claim, however, there is no need to be concerned with the sign of $ \gamma $, because so long as (\ref{e.f.h})  is satisfied, $ \gamma $ is forced to be positive.  Indeed, if (\ref{e.f.h})  held with $ \gamma < 0 $, then $ z + t \dot{z}  $ would satisfy the (sufficient as well as necessary) optimality conditions for the problem obtained from $ \hpie{n-2}{z} $ by replacing ``$ \min c^*x $'' with ``$ \max c^*x $'', contradicting that $ c^*z > c^*(z + t \dot{z}) $. And if  (\ref{e.f.h})  held with $ \gamma = 0 $, then by Theorem~\ref{t.e.c}(B), $ \{ x: Ax = 0 \} $ would be a subspace of the tangent space of $ \partial \Lambdaiep{n-2}{z} $ at $ z + t \dot{z} $ -- but this would lead to a contradiction, as  $ z $ is in the interior of $  \Lambdaiep{n-2}{z} $ and $ A \dot{z} = 0 $.  Thus, we need only be concerned with showing there exist $ \gamma $ and $ y^* $ satisfying (\ref{e.f.h}), and not be concerned with the sign of $ \gamma $.

By linearity of $ x \mapsto D \ppie{n-2}{z}(x) $ (as $ \ppie{n-2}{z} $ is a quadratic polynomial),
\[ 
 D\ppie{n-2}{z}(z + t \dot{z} )  = D\ppie{n-2}{z}(z) + t D\ppie{n-2}{z}( \dot{z}) \; . \] 
Since $  D\ppie{n-2}{z}(z) $ is a positive multiple of $ Dp(z) $ (by (\ref{e.c.g})), and since $ Dp(z) $ satisfies (\ref{e.f.a}), to show that $ x = e + t \dot{z} $ satisfies (\ref{e.f.h})  for some $ \gamma $ and $ y^* $, it thus suffices to show
\begin{equation}  \label{e.f.i}
  D\ppie{n-2}{e}( \dot{z}) = \bar{ \gamma} c^* + \bar{y}^*A  \quad \textrm{for some $ \bar{\gamma}   $ and $ \bar{y}^* $} \; . 
\end{equation} 

Now, for any vectors $ u $ and $ v $,
\begin{align*}
  D^2 \ppie{n-2}{z}(u)[v] & = \left. \smfrac{d}{dt} \left( D \ppie{n-2}{z}(u + tv) \right) \right|_{t=0} \\ & = D \ppie{n-2}{z}(v) \quad \textrm{(by linearity of $ x \mapsto D \ppie{n-2}{z}(x) $)} \; . 
\end{align*}
In particular,
\[ 
 D\ppie{n-2}{z}( \dot{z}) = D^2 \ppie{n-2}{z}(z)[ \dot{z}] \; , \]
and hence, by (\ref{e.c.g}), 
\[ 
  D\ppie{n-2}{z}( \dot{z}) \textrm{ is a positive multiple of }  D^2 p(z)[ \dot{z}] \; . \] 
Consequently, that $ \bar{\gamma}   $ and $ \bar{y}^*  $ can be chosen to satisfy (\ref{e.f.h})  follows from  (\ref{e.f.c}), thus completing the proof.  \end{proof}  

\section{{\bf  Proofs of Theorems 7 and 8}}  \label{s.g}

We now consider the optimization problems $ \hpie{i}{e} $ for all $ 0 \leq i \leq n-1 $.
 The purpose of the present section is to establish a useful characterization of those optimal solutions of $ \hpie{i}{e} $ which do not lie in $  \partial \Lambdaiep{i+1}{e} $.  For $ e \in \swath(i) \setminus \core(i) $, the characterization  precisely identifies the unique optimal solution $ \xie{i}{e} $ (because $ \xie{i}{e} \notin \partial \Lambdaiep{i+1}{e} $ by (\ref{e.d.b})).  Although only the case $ e \in \swath(i) \setminus \core(i) $ is relevant to the Main Theorem, we record the characterization generally, because it has the potential for computational relevance also when $ e \in \core(i) $. 

In this section, the only properties used of the polynomials $ \ppie{i}{e} $ is that they are hyperbolic and nonconstant.  {\bf    Thus, to ease notation, for this section we let $ p $ be any nonconstant polynomial  which is hyperbolic in direction $ e $.}  We let $ p'_e $ denote the derivative polynomial.  

If $ p $ is linear, then $ p_e' \not\equiv 0 $ is a constant polynomial and its hyperbolicity cone $ \Lambdapp' $ is the entire Euclidean space $ {\mathcal E} $.

To aid the reader's intuition, the following proposition (not used in the sequel) explains  which points in $ \opt $ can possibly be not in $ \partial \Lambdape{e}' $. 
\begin{prop}  \label{t.g.a}
If $ e \in \Lambdapp $  then 
\begin{center} either \quad $ \opt \subset \partial \Lambdape{e}' $ \quad or \quad $ \opt \setminus \partial \Lambdape{e}' = \relint(\opt) \; .    $
\end{center} 
\end{prop}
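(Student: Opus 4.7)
The plan is to split into the two alternatives. Observe $\opt \subseteq \Lambdap \subseteq \Lambdape{e}'$ by the nesting of derivative cones, so every $x \in \opt$ has $p_e'(x) \geq 0$; and since $\Lambdape{e}'$ is regular by Theorem~\ref{t.d.a}(D), its topological interior equals $\Lambdappe{e}'$, so $\opt \setminus \partial \Lambdape{e}' = \opt \cap \Lambdappe{e}'$. If this intersection is empty, the first alternative of the conclusion holds; otherwise I fix $y_0 \in \opt \cap \Lambdappe{e}'$ and prove $\opt \cap \Lambdappe{e}' = \relint(\opt)$ by two inclusions.

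For $\relint(\opt) \subseteq \Lambdappe{e}'$, I would invoke only convex geometry. Given $x \in \relint(\opt)$, the definition of relative interior furnishes $z \in \opt$ and $\lambda \in (0,1)$ with $x = (1-\lambda) y_0 + \lambda z$. Since $y_0$ lies in the topological interior $\Lambdappe{e}'$ and $z \in \Lambdape{e}'$, the standard fact that the interior of a convex set is star-shaped with respect to any of its interior points forces $x \in \Lambdappe{e}'$.

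The reverse inclusion $\opt \cap \Lambdappe{e}' \subseteq \relint(\opt)$ is where the real effort is required. For $x \in \opt$ with $p_e'(x) > 0$, the Taylor expansion $p(x+te) = \sum_{j} (t^j/j!)\ppie{j}{e}(x)$ makes the multiplicity $m(x)$ of $0$ as a root of $t \mapsto p(x+te)$ equal to the smallest $j$ with $\ppie{j}{e}(x) \neq 0$. The standing assumption that $c^*$ is not in the image of $A^*$ excludes interior optima (first-order optimality at such a point would give $c^* = y^*A$), so $p(x) = 0$, and combined with $p_e'(x) > 0$ this yields $m(x) = 1$. Since $\partial \Lambdap$ is then smooth at $x$, a Farkas argument applied to first-order optimality at $x$---valid because $Dp(x) \notin \mathrm{image}(A^*)$, else optimality alone would push $c^*$ into $\mathrm{image}(A^*)$---yields $c^* = y^*A + \mu Dp(x)$ with $\mu > 0$. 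Setting $s^* := \mu Dp(x)$ produces a dual optimum, and complementary slackness gives $\opt = F \cap \{y : Ay = b\}$ where $F := \{z \in \Lambdap : Dp(x)[z] = 0\}$ is a proper exposed face of $\Lambdap$.

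At this stage I would invoke the structural result of \cite{renegar} that on a regular hyperbolicity cone, the multiplicity function is constant on the relative interior of each face and strictly increases on passage to a proper subface. Applied to $y_0$ (with $m(y_0) = 1$): its minimal face $F_{y_0}$ satisfies $F_{y_0} \subseteq F$ and $m(F_{y_0}) = 1$; if $F_{y_0}$ were a proper subface of $F$, strict monotonicity would force $m(F) = 0$, hence $F = \Lambdap$, contradicting properness of $F$. So $m(F) = 1$, and the same reasoning applied to $x$ identifies its minimal face with $F$, placing $x \in \relint(F)$. Combined with $\relint(\opt) = \relint(F) \cap \{y : Ay = b\}$ (affine intersection commuting with relative interior when nonempty), this gives $x \in \relint(\opt)$. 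The principal obstacle is the multiplicity--face correspondence itself; I would import it from \cite{renegar} rather than reprove it.
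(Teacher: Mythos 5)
Your reduction (show every $x\in\opt$ with $p_e'(x)>0$ lies in $\relint(\opt)$) and your first inclusion $\relint(\opt)\subseteq\Lambdappe{e}'$ are fine, and the latter matches the paper's ``standard arguments.'' But the reverse inclusion, which is where all the content sits, has two genuine gaps. First, the KKT step. Your justification that $Dp(x)\notin\mathrm{image}(A^*)$ --- ``else optimality alone would push $c^*$ into $\mathrm{image}(A^*)$'' --- is not an argument. Fritz John gives either the desired $c^*=y^*A+\mu Dp(x)$ with $\mu>0$, or else $Dp(x)\in\mathrm{image}(A^*)$; in the latter case $Dp(x)$ vanishes identically on $\{z:Az=b\}$, the whole feasible region sits inside the exposed face $\{z\in\Lambdap:Dp(x)[z]=0\}$, and no multiplier for $c^*$ need exist. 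That degenerate case is not vacuous under the standing assumptions; it can be handled (one shows $x\in\relint(\feas)$, so $c^*$ is constant on $\feas$ and $\opt=\feas$), but your proof silently assumes it away.

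Second, and more seriously, the load-bearing step is the assertion that $\mult$ is constant on relative interiors of faces and \emph{strictly} increases on proper subfaces, which you propose to import from \cite{renegar}. Constancy on relative interiors does follow from results quoted in this paper (the sublevel sets of $-\mult$ on $\Lambdap$ are the sets $\Lambdap\cap\partial\Lambdaiep{m-1}{e}$, which are unions of faces). The strict increase, however, is exactly the hard algebraic content, and it is not a fact of general convexity: a convex body can have a smooth boundary point that lies on the relative \emph{boundary} of its exposed face (e.g.\ the epigraph of $x\mapsto\max(0,-x)^3$ at the origin), so your inference ``$m(x)=1$, hence $x\in\relint(F)$'' genuinely needs hyperbolicity, and I do not find the needed statement in \cite{renegar} in citable form. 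Proving it requires noting that $\ppie{j}{e}$ ($j<m$) vanishes identically on the line joining $\relint(F)$ to $\relint(G)$ and then applying (\ref{e.c.a}) just past $G$ --- which is, almost verbatim, the paper's entire direct proof: there one takes $x\in\opt\setminus\relint(\opt)$ and $y\in\relint(\opt)$, observes $p\equiv 0$ on the line through them (since $\opt\subseteq\partial\Lambdap$ by the assumption on $c^*$), and concludes from $x+t(y-x)\notin\Lambdap$ for $t<0$ together with (\ref{e.c.a}) that $p_e'(x)=0$. So once your gaps are filled, the argument collapses onto the paper's half-page proof after a long detour through duality and the face lattice; you have also dropped the cases the paper treats explicitly ($p$ linear --- where $\Lambdap$ is not regular and your machinery does not apply --- and $\opt$ empty or a singleton).
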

\begin{proof} The proposition is trivially true when $ p $ is linear, because then, $ \partial \Lambdape{e}' = \emptyset $ and $ \relint(\opt) = \opt $ (due to $ \opt $ being an affine space).  The proposition also is trivially true if $ \opt $ is the empty set or consists of a single point.  Thus, assume $ \deg(p) \geq 2 $, assume $ \opt $ has more than one point, and assume $ \opt $ is not contained in $ \partial \Lambdape{e}' $.  

Since $ \opt \setminus \partial \Lambdape{e}' \neq \emptyset $ (by assumption), from the containment $ \opt \subset \Lambdape{e}' $, and from the convexity of the sets $ \opt $ and $ \Lambdape{e}' $, follows by standard arguments that  
\[ 
  \relint(\opt) \cap \partial \Lambdape{e}' = \emptyset \; . \]
Thus, fixing optimal $ x $ not contained in $  \relint(\opt) $, it remains to show $ x \in \partial \Lambdape{e}' $, that is, to show $ p_e'(x) = 0 $.

Choose $ y \in \relint(\opt) $, and let $ y(t) := x + t(y-x) $ ($ t \in \mathbb{R} $). Clearly, if $ y(t) \in \feas $ then $ y(t) \in \opt $.  Thus, since $ x $ is in the relative boundary of $ \opt $, it holds that $ y(t) \notin \Lambdap $ for $ t < 0 $.

Since the line segment with endpoints $ x $ and $ y $ is contained in $ \partial \Lambdap $, we have $ p(y(t)) = 0 $ for $ 0 \leq t \leq 1 $.  Since the only univariate polynomial with infinitely many roots is the polynomial that is identically zero, it follows $ p(y(t)) = 0 $ for {\em  all} $ t \in \mathbb{R} $.  Hence, from the characterizations
\[ 
    \Lambdaiep{k}{e} = \{ z: \ppie{j}{e}(z) \geq 0 \textrm{ for all $ j = k, \ldots, n-1$} \} \; , \] 
and the fact that $ y(t) \notin \Lambdap $ for $ t < 0 $, follows $ p_e'(y(0)) = 0 $, that is, $ p_e'(x) = 0 $, as desired. \end{proof} 

For $ e \in \Lambdapp $, 
\[ 
  \textrm{define } \;  q_e: \Lambdappe{e}' \rightarrow \mathbb{R} \; \textrm{ by } \; q_e(x) := p(x)/ p_e'(x) \; . \]  
As already mentioned, the purpose of this section is to develop a useful characterization of  the set $ \opt \setminus \partial \Lambdape{e}' $.  The characterization is that the points in the set are precisely the optimal solutions to the following linearly constrained optimization problem:
\begin{equation}  \label{e.g.a}
 \begin{array}{rl}
 \min_{x \in \Lambdappe{e}'} & - \ln c^*(e-x) - q_e(x) \\
\textrm{s.t.}   & Ax = b \; . 
\end{array} \end{equation}
Critical to achieving the characterization (and critical to characterization's relevance for computation) is Theorem \hyperlink{targ_thm_seven}{7}, now restated and proved.

\begin{thm}  \label{t.g.b}
The rational function $ q_e: \Lambdappe{e}' \rightarrow \mathbb{R} $ is concave.
\end{thm}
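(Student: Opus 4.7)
The plan is to prove $D^2 q_e(x)[v,v] \le 0$ for every $x \in \Lambdappe{e}'$ and every $v \in \mathcal{E}$; because $q_e$ is analytic on the open convex set $\Lambdappe{e}'$ (using that $p'_e > 0$ there), this is equivalent to concavity. Applying the quotient rule to $q_e = p/p'_e$ and clearing denominators by multiplying through by $p'_e(x)^3 > 0$, the claim reduces to the algebraic inequality
\[
 p'_e(x)^2\, D^2 p(x)[v,v] - 2\, p'_e(x)\, Dp(x)[v]\, Dp'_e(x)[v] + 2\, p(x)\, (Dp'_e(x)[v])^2 - p(x)\, p'_e(x)\, D^2 p'_e(x)[v,v] \le 0.
\]

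To establish this inequality I would invoke hyperbolicity along $e$ via the two-parameter polynomial $\pi(s,t) := p(x+tv+se)$; since $e$ is a hyperbolicity direction of $p$ at every base point, $s \mapsto \pi(s,t)$ has all real roots for each fixed $t$. Writing these roots as $r_1(t),\ldots,r_n(t)$, smooth in $t$ near $0$ wherever they are simple, one obtains the explicit formula $q_e(x+tv) = -\bigl(\textstyle\sum_j 1/r_j(t)\bigr)^{-1}$ on a neighborhood of $t=0$, which parametrizes the restriction of $q_e$ to the line $x+tv$ in terms of data constrained by the Newton-type identity $\pi(s,t)\,\partial_s^2\pi(s,t) \le (\partial_s\pi(s,t))^2$, valid at every $(s,t)$ by real-rootedness in $s$. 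Computing $\frac{d^2}{dt^2} q_e(x+tv)\big|_{t=0}$ by the chain rule, with $r_j'(0)$ obtained from implicit differentiation of $\pi(r_j(t),t)=0$, should reduce to a Cauchy--Schwarz-style identity in the roots that mirrors and extends the proof on $\Lambdapp$ given in \cite{bgls}.

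The main obstacle is that for $x \in \Lambdappe{e}' \setminus \Lambdapp$, the interlacing between the roots of $\pi(\cdot,0)$ and those of $\partial_s\pi(\cdot,0)$ combined with $p'_e(x) > 0$ forces exactly one of $r_1(0),\ldots,r_n(0)$ to have the ``wrong'' sign, while all others remain positive. The Cauchy--Schwarz argument that works cleanly on $\Lambdapp$ then picks up an exceptional term of undesired sign coming from this single root, and the key technical step is to show that the positivity of $p'_e(x)$ --- equivalently, the interlacing inequality $\sum_k \prod_{j \ne k}(-r_j(0)) > 0$ --- dominates that exceptional term. This is the extra algebraic content that distinguishes $\Lambdappe{e}'$ from $\Lambdapp$ and that lets the concavity statement extend.
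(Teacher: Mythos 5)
Your reduction is correct as far as it goes --- multiplying $D^2q_e(x)[v,v]\le 0$ through by $p'_e(x)^3>0$ does yield exactly the displayed polynomial inequality, and you have correctly located where the difficulty lies --- but the proof stops at precisely the point where the theorem's content begins. For $x\in\Lambdapp$ the concavity of $q_e$ was already known (\cite{bgls}); what the theorem adds is the extension to $\Lambdappe{e}'\setminus\Lambdapp$, where, as you observe, exactly one root of $s\mapsto p(x+se)$ has the wrong sign. Your proposal says ``the key technical step is to show that the positivity of $p'_e(x)$ \ldots\ dominates that exceptional term,'' but no argument for this domination is given, and it is not a routine Cauchy--Schwarz manipulation: it is the entire content of the theorem beyond the known case, and nothing in the proposal indicates how the single condition $\sum_k\prod_{j\neq k}(-r_j(0))>0$ is to control the contribution of the exceptional root. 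There are also secondary gaps in the machinery: the root functions $r_j(t)$ are guaranteed smooth only where the roots are simple, and the formula $q_e(x+tv)=-\bigl(\sum_j 1/r_j(t)\bigr)^{-1}$ degenerates whenever $p(x+tv)=0$, which happens on a codimension-one subset of $\Lambdappe{e}'$ (points of $\partial\Lambdap$ interior to the derivative cone). These are repairable by continuity and density, but they are not addressed. As written, the proposal is a plan whose decisive step is missing.

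For contrast, the paper sidesteps all root computations with a one-dimension-higher lift: $P(x,t):=t\,p(x)$ is hyperbolic in direction $E=(e,1)$, its derivative polynomial in direction $E$ is $(x,t)\mapsto p(x)+t\,p'_e(x)$, and the hyperbolicity cone of that derivative polynomial --- convex by G\r{a}rding's theorem --- is identified with the open epigraph of $-q_e$ over $\Lambdappe{e}'$. Convexity of the epigraph gives concavity of $q_e$ with no case analysis on the signs of the roots. If you want to salvage the computational route you must actually prove the domination inequality for the exceptional root; the lifting argument is the reason the paper never has to.
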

\begin{proof}  Introduce a new variable $ t $ and let $ P(x,t) := t \,  p(x) $, a polynomial that is easily seen to be hyperbolic in direction $ E = (e,1) $.  Let $ K' $ be the hyperbolicity cone for the derivative polynomial $ P'_E $ -- thus, $ K' $ is the connected component of $ S := \{ (x,t): p(x) + t p'_e(x) > 0 \} $ containing $ E $.  We claim $ K' $ is precisely the interior of the epigraph for $ -q_e: \Lambdappe{e}' \rightarrow \reals  $.  Since $ K' $, being a hyperbolicity cone, is convex, establishing the claim will establish the theorem.

If $ x \in \partial \Lambdape{e}' $ then $ p_e'(x) = 0 $ and, according to (\ref{e.c.e}), $ p(x) \leq  0 $, so for no $ t $ do we have $ (x,t) \in K' $.  Thus, now assuming  $ x \in \Lambdappe{e}' $ and $ t > -q_e(x) $, to establish the claim it suffices to show there is a path in $ S $ from $ (x,t) $ to $ E = (e,1) $ (because that will imply $ (x,t) $ and $ E $ are in the same connected component of $ S $).

Choose $ \bar{t} $ satisfying $ \bar{t} > \max \{ -q_e(sx + (1-s)e): 0 \leq s \leq 1 \} $ -- the maximum exists because the line segment connecting $ x $ to $ e $ is contained in the convex set $ \Lambdappe{e}' $ and because $ p'_e $ is positive everywhere in  $ \Lambdappe{e}' $.  It is easily verified that a path in $ S $ from $ (x,t) $ to $ (e,1) $ is obtained with three line segments; the line segment between $ (x,t) $ and $ (x, \bar{t} ) $, the line segment between $ (x, \bar{t} ) $ and $ (e, \bar{t}) $, and the line segment between $ (e, \bar{t}) $ and $ (e,1) $.    \end{proof}  

We now restate and prove Theorem \hyperlink{targ_thm_eight}{8}, the main result of this section.
\begin{thm}  \label{t.g.c}
If  $ e \in \Lambdapp $ then
\[ \opt \setminus \partial \Lambdape{e}' = \{ x: x \textrm{ is optimal for (\ref{e.g.a})} \}    \]
(possibly the empty set).
\end{thm}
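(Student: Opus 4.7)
The plan is to match up first-order optimality conditions for the two problems. The reformulated problem (\ref{e.g.a}) is convex, since $-\ln c^*(e-x)$ is convex where $c^*(e-x) > 0$ and, by Theorem~\ref{t.g.b}, $-q_e$ is convex on $\Lambdappe{e}'$. Its KKT conditions are therefore both necessary and sufficient for optimality. The strategy is to compute the KKT identity of (\ref{e.g.a}), show it is equivalent (on $\Lambdappe{e}'$) to the KKT identity for $\hp$ in the presence of the equation $p(x) = 0$, and separately show that every critical point of (\ref{e.g.a}) must satisfy $p(x) = 0$.

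For the forward direction, I would take $x \in \opt \setminus \partial \Lambdape{e}'$. The standing assumption $c^* \notin \range(A^*)$ rules out optimality at an interior point of $\Lambdap$ (the normal cone is $\{0\}$ there), so $x \in \partial \Lambdap$ and $p(x) = 0$; the same assumption applied at $e$ itself forces $e \notin \opt$, hence $c^*(e-x) > 0$. Since $x \notin \partial \Lambdape{e}'$, the nesting of the derivative cones yields $\ppie{j}{e}(x) > 0$ for every $j \geq 1$, so $\Lambdap$ coincides locally near $x$ with $\{z : p(z) \geq 0\}$. The KKT condition for $\hp$ therefore reads $c = \lambda \nabla p(x) + A^*y$ with $\lambda \geq 0$. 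Pairing with $e-x$ and invoking Euler's identities $\nabla p(x)^*e = p_e'(x)$ and $\nabla p(x)^*x = n\,p(x) = 0$ identifies $\lambda = c^*(e-x)/p_e'(x) > 0$. Since $p(x) = 0$ implies $\nabla q_e(x) = \nabla p(x)/p_e'(x)$, dividing the KKT identity by $c^*(e-x)$ reproduces exactly the first-order condition of (\ref{e.g.a}).

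For the converse, I would take $x$ optimal for (\ref{e.g.a}); then $x \in \Lambdappe{e}'$ (so $x \notin \partial \Lambdape{e}'$) and $c^*(e-x) > 0$, and the first-order condition is $\nabla q_e(x) = c/c^*(e-x) + A^*y$ for some $y$. Pairing with $e-x$, using $A(e-x) = 0$, and running Euler's identities through both $p$ and $p_e'$ produces
\[
1 \;=\; (e-x)^* \nabla q_e(x) \;=\; 1 \,-\, \frac{p(x)\bigl(p_e'(x) + \ppie{2}{e}(x)\bigr)}{p_e'(x)^2}.
\]
On $\Lambdappe{e}'$ both summands in the bracket are strictly positive -- $p_e'(x) > 0$ by definition, and $\ppie{2}{e}(x) > 0$ by the nesting $\Lambdappe{e}' \subseteq \Lambdaiepp{2}{e}$ -- hence $p(x) = 0$. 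Combined with $\ppie{j}{e}(x) > 0$ for $j \geq 1$, the characterization (\ref{e.c.a}) places $x$ in $\partial \Lambdap$, hence in $\feas$. Substituting $p(x) = 0$ converts the first-order identity into $c = \lambda \nabla p(x) + A^*y'$ with $\lambda = c^*(e-x)/p_e'(x) > 0$, which, by convexity of $\hp$ and the local description $\Lambdap = \{z : p(z) \geq 0\}$, is sufficient for $x \in \opt$.

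The main obstacle is the inner-product-with-$(e-x)$ calculation: one must notice that the Euler terms telescope into the clean expression $p(x)\bigl(p_e'(x) + \ppie{2}{e}(x)\bigr)$, and recognize that the second factor can only be signed by appealing to the cone nesting $\Lambdappe{e}' \subseteq \Lambdaiepp{2}{e}$ rather than to the concavity from Theorem~\ref{t.g.b}. Once that observation is in place, every remaining step is a routine application of the KKT framework to the two convex problems.
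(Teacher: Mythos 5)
Your proof is correct and takes essentially the same route as the paper's: both match the first-order conditions of $\hp$ and of (\ref{e.g.a}), with the pivotal step being the evaluation $Dq_e(x)[e-x] = 1 - p(x)\bigl(p_e'(x)+p_e''(x)\bigr)/p_e'(x)^2$, where positivity of the bracketed factor on $\Lambdappe{e}'$ forces $p(x)=0$ at any critical point of (\ref{e.g.a}). The only cosmetic difference is that you establish $c^*(e-x)>0$ up front (using the paper's blanket convention $Ae=b$, which both proofs need in order to kill the $y^*A(e-x)$ term), whereas the paper extracts it implicitly from the multiplier identity $\lambda = p_e'(x)/c^*(e-x)$.
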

\begin{proof} 
For $ x $  in $ \Lambdappe{e}' $ we have $ Dp(x) \neq 0 $ (because $ 0 < p'_e(x) = Dp(x)[e] $).
Thus, from the characterization
\[   (\partial \Lambdap) \setminus \partial \Lambdape{e}' = \{ x: p(x) = 0 \textrm{ and } \ppie{j}{e}(x) > 0 \textrm{ for all $ j = 1, \ldots, n-1 $} \} \;  \]
(by (\ref{e.c.c})), it follows that for the convex optimization problem $ \hp $,
\[ 
   \begin{array}{c}  x \in \opt \setminus \partial \Lambdape{e} ' \\\Leftrightarrow \\ 
\lambda c^* = Dp(x) + y^* A \quad \textrm{for some $ \lambda > 0 $ and $ y^* $}  \\
      A x = b \\
       x \in (\partial \Lambdap) \setminus \partial \Lambdape{e}' \; . \end{array} 
\] 
Observe that these conditions and homogeneity of $ p $ give
\[ \lambda c^*(e-x) = Dp(x)[e-x] = p_e'(x) - n \, p(x) = p_e'(x) \; , \]
that is,
\[ 
   \lambda = \frac{p_e'(x)}{c^*(e-x)} \; . 
\]
Clearly, then,
\begin{equation}  \label{e.g.b}
\left. \begin{array}{c}  x \in \opt \setminus \partial \Lambdape{e} ' \\\Leftrightarrow \\  
\smfrac{p_e'(x)}{c^*(e-x)} c^* = Dp(x) + y^* A \quad \textrm{for some $ y^* $}  \\
      A x = b \\
       x \in (\partial \Lambdap) \setminus \partial \Lambdape{e}'  \end{array} \quad \right\}
\end{equation}

On the other hand, necessary and sufficient conditions for $ x $ to solve the convex optimization problem (\ref{e.g.a})  are
\begin{equation}  \label{e.g.c}
 \left.  \begin{array}{c}
\smfrac{1}{c^*(e-x)} c^* - Dq_e(x) = w^* A  \quad \textrm{for some $ w^* $} \\
   Ax = b \\
   x \in \Lambdappe{e}'    \end{array} \quad \right\}
\end{equation} 
Observe that these conditions along with homogeneity of $ p $ and $ p_e' $ give
\begin{align*}
  1 & = \smfrac{1}{c^*(e-x)} c^*(e-x) \\ & = Dq_e(x)[e-x]  \\
    & = \smfrac{1}{p_e'(x)} \left(  Dp(x)[e-x]  - \smfrac{p(x)}{p_e'(x)} Dp_e'(x)[e-x] \right) \\
    & =  \smfrac{1}{p_e'(x)} \left( p_e'(x) - n \, p(x) - \smfrac{p(x)}{p_e'(x)} \left(   p_e''(x) - (n-1) \, p_e'(x) \right) \right) \\
  & = 1 - \frac{p(x)}{p_e'(x)} \left( 1 +  \frac{p_e''(x)}{p_e'(x)} \right)  \; . 
\end{align*}
Consequently, as $ x \in \Lambdappe{e}' $ implies $ p_e'(x) > 0 $ and $ p_e''(x) > 0 $, it must be for optimal $ x $ that $ p(x) = 0 $. Hence, in the necessary and sufficient optimality conditions (\ref{e.g.c}), the containment $ x \in \Lambdappe{e}' $ can be replaced by $ x \in ( \partial \Lambdap) \setminus \partial \Lambdape{e}' $.  Since for $ x $ satisfying $ p(x) = 0 $ we also have $ Dq_e(x) = \smfrac{1}{p_e'(x)} Dp(x) $, we see that the conditions (\ref{e.g.c})  are equivalent to
\begin{equation}  \label{e.g.d}
\left.   \begin{array}{c}
\smfrac{1}{c^*(e-x)} c^* - \smfrac{1}{p_e'(x)} Dp(x) = w^* A \quad \textrm{for some $ w^* $}  \\
   Ax = b \\
x \in (\partial \Lambdap) \setminus \partial \Lambdape{e}'    \end{array} \quad \right\}
\end{equation}

The optimality conditions (\ref{e.g.d})  and (\ref{e.g.b})  are identical. \end{proof} 

We close this section with a restatement of Corollary \hyperlink{targ_cor_nine}{9}.

\begin{cor}  \label{t.g.d}
If $ 1 \leq i \leq n-2 $, $ e \in \swath(i) \setminus \core(i) $ and
\[ f(x) := - \ln c^*(e-x) - \frac{\ppie{i}{e}(x)}{\ppie{i+1}{e}(x)} \; , \]
 then $ \xie{i}{e} $ is the unique optimal solution for the convex optimization problem
\[ \begin{array}{rl}
 \min_x & f(x) \\
 \mathrm{s.t.} & Ax = b \; . 
\end{array} \]
Moreover, $ D^2f(\xie{i}{e})[v,v] > 0 $ if $ v $ is not a scalar multiple of $ \xie{i}{e} $ (in particular, if $ v \neq 0 $ satisfies $ Av = 0 $).  
\end{cor}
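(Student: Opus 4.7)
The first assertion is immediate from Theorem~\ref{t.g.c} applied with $\ppie{i}{e}$ in the role of $p$ and $\ppie{i+1}{e}$ in the role of $p_e'$: by (\ref{e.a.c}), $\xie{i}{e} \in \Lambdaiepp{i+1}{e}$, so it lies off $\partial \Lambdaiep{i+1}{e}$ and, being the unique element of $\optie{i}{e}$, is the unique optimum of the convex problem. For the second-derivative assertion, write $\bar{x} := \xie{i}{e}$ and $f = g - q_e$, where $g(x) := -\ln c^*(e-x)$ has rank-one PSD Hessian $D^2 g(x)[v,w] = (c^*v)(c^*w)/(c^*(e-x))^2$ and $-q_e$ is convex by Theorem~\ref{t.g.b}. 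Hence $D^2 f(\bar{x}) \succeq 0$. The plan is to assume $D^2 f(\bar{x})[v,v] = 0$ and show that $v$ must be a scalar multiple of $\bar{x}$; from the decomposition, this forces both $c^*v = 0$ and $-D^2 q_e(\bar{x})[v,v] = 0$, and by the Cauchy--Schwarz consequence for PSD forms, the latter gives $D^2 q_e(\bar{x})[v,w] = 0$ for every $w$.

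Using $\ppie{i}{e}(\bar{x}) = 0$ (since $\bar{x} \in \partial \Lambdaiep{i}{e}$), direct differentiation of $q_e = \ppie{i}{e}/\ppie{i+1}{e}$ yields
\[
  \ppie{i+1}{e}(\bar{x})^2 D^2 q_e(\bar{x})[v,w]
  = \ppie{i+1}{e}(\bar{x}) D^2 \ppie{i}{e}(\bar{x})[v,w]
     - D\ppie{i}{e}(\bar{x})[v] D\ppie{i+1}{e}(\bar{x})[w]
     - D\ppie{i}{e}(\bar{x})[w] D\ppie{i+1}{e}(\bar{x})[v].
\]
Specialize to $w = e$. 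The defining identities $D\ppie{j}{e}(\bar{x})[e] = \ppie{j+1}{e}(\bar{x})$ together with the symmetry consequence $D^2\ppie{i}{e}(\bar{x})[v,e] = D\ppie{i+1}{e}(\bar{x})[v]$ collapse the right-hand side to $-D\ppie{i}{e}(\bar{x})[v] \cdot \ppie{i+2}{e}(\bar{x})$. The factor $\ppie{i+2}{e}(\bar{x})$ is strictly positive: when $i \leq n-3$, this follows from the nesting $\bar{x} \in \Lambdaiepp{i+1}{e} \subseteq \Lambdaiepp{i+2}{e}$ together with (\ref{e.c.b}); when $i = n-2$, $\ppie{n}{e}$ is the positive constant $\ppie{n-1}{e}(e)$ (the linear functional $\ppie{n-1}{e}$ is positive on $\Lambdapp$, hence at $e$). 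Therefore $D\ppie{i}{e}(\bar{x})[v] = 0$.

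Feeding this back into the displayed identity at $w = v$ and using $D^2 q_e(\bar{x})[v,v] = 0$ yields $D^2 \ppie{i}{e}(\bar{x})[v,v] = 0$. The hypotheses of Theorem~\ref{t.e.c}(C) (applied to the original $p$ with direction $e$ and index $i$) are met because $\bar{x} \in (\partial \Lambdaiep{i}{e}) \setminus \Lambdaiep{i-1}{e}$, by (\ref{e.a.c}) and (\ref{e.a.b}); the theorem then forces $v$ to be a scalar multiple of $\bar{x}$, completing the argument. The chief technical step I anticipate is the $w=e$ collapse: the four derivative terms must reduce to the single product $-D\ppie{i}{e}(\bar{x})[v] \cdot \ppie{i+2}{e}(\bar{x})$, and positivity of $\ppie{i+2}{e}(\bar{x})$ in the endpoint case $i = n-2$ must be verified by hand. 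Everything else -- the Hessian decomposition, the Cauchy--Schwarz extraction of $D^2 q_e(\bar{x})[v,\cdot] \equiv 0$, and the invocation of Theorem~\ref{t.e.c}(C) -- is routine.
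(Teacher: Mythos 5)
Your proof is correct, and it rests on the same two pillars as the paper's: Theorem~\ref{t.g.c} for the identification of $\xie{i}{e}$ as the optimizer, and Theorem~\ref{t.e.c}(C) for the Hessian claim. The difference is one of completeness rather than strategy. The paper dismisses the second assertion as ``immediate from Theorem~\ref{t.e.c}(C),'' but that theorem only controls $D^2\ppie{i}{e}(\bar{x})[v,v]$ for $v$ in the tangent hyperplane $D\ppie{i}{e}(\bar{x})[v]=0$; to cover arbitrary $v$ one must first show that degeneracy of $D^2f(\bar{x})$ forces $v$ into that hyperplane. Your argument supplies exactly this missing reduction: the decomposition of $D^2f(\bar{x})$ into a rank-one PSD form plus $-D^2q_e(\bar{x})$ (PSD by Theorem~\ref{t.g.b}), the Cauchy--Schwarz extraction of $D^2q_e(\bar{x})[v,\cdot]\equiv 0$, and the $w=e$ specialization. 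I verified the collapse: since $D\ppie{i}{e}(x)[e]=\ppie{i+1}{e}(x)$ identically, differentiating in $v$ gives $D^2\ppie{i}{e}(\bar{x})[v,e]=D\ppie{i+1}{e}(\bar{x})[v]$, so the first and third terms of the product-rule expansion cancel and one is left with $-D\ppie{i}{e}(\bar{x})[v]\cdot\ppie{i+2}{e}(\bar{x})$ as claimed; positivity of $\ppie{i+2}{e}(\bar{x})$ follows from $\bar{x}\in\Lambdaiepp{i+1}{e}$ and (\ref{e.c.b}) when $i\le n-3$, and from $\ppie{n}{e}\equiv\ppie{n-1}{e}(e)>0$ when $i=n-2$, exactly as you say. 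So the proposal is a faithful, fully justified version of the paper's proof, with no gaps.
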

\begin{proof} The first statement is immediate from Theorem~\ref{t.g.c}.  The statement regarding $ D^2 f $ is immediate from Theorem~\ref{t.e.c}(C).  \end{proof}  

As was noted in \S\ref{s.b}, the corollary's statement regarding $ D^2 f $ implies quadratic convergence of Newton's method when initiated near to $ \xie{i}{e} $.  This is not pursued in the present paper. 

\section{{\bf Proof of Part I of Main Theorem}}  \label{s.h}  

Recall that for the differential equation
\[ \smfrac{d}{dt} e(t) = \xie{i}{e(t)} - e(t) \; , \quad e(0) = e \in \swath(i) \setminus \core(i) \; , \]
we let $ T = T(e(0)) $ denote the time (possibly $ T = \infty $) at which the trajectory terminates due either to reaching the boundary of $ \swath(i) \setminus \core(i) $ or escaping to infinity.  We say that $ t \mapsto e(t) $ $ ( 0 \leq t < T ) $ is a ``maximal trajectory.''

The primary goal of this section is to prove that the path $ t \mapsto e(t) $, or the trajectory $ t \mapsto \xie{i}{e(t)} $ (perhaps both), converges to optimality  as $ t \rightarrow T $.

As we shall see, the proof is reasonably straightforward when $ T = \infty $, in which case the bounded trajectory $ t \mapsto e(t) $ has all limit points in $ \opt $.  

Perhaps deserving of mention is that for all $ e \in \swath(n-2) \setminus \core(n-2) $, it holds $ T(e) = \infty $, and so our reasonably straightforward proof applies.

Unfortunately, the case $ T < \infty $ is an entirely different matter.  Here we show $ t \mapsto e(t) $ converges to a unique point in $ \core(i) $, and we show the path $ t \mapsto \xie{i}{e(t)} $ is bounded, with all limit points in $ \opt $.  This proof is subtle and long.  It would be great if a more direct proof were discovered.

Before restating the theorem and beginning the proof, we introduce a new use of differentials that plays a significant role in this section and later.

As was recalled for the reader in \S\ref{s.c}, the $ j^{th} $ differential at $ x $ for an analytic function $ f $ is the symmetric multilinear form defined on $ j $-tuples of vectors $ [v_1, \ldots, v_j] $ by
\[  D^j f(x)[v_1, \ldots, v_j] := \smfrac{d}{dt_1} \cdots \smfrac{d}{dt_j} f(x + t_1 v_1 + \cdots t_j v_j)|_{t_j = 0} \cdots |_{t_1 = 0} \; . \]
Until now, significant roles have been played only by the differentials  $ D^j\ppie{k}{e}(x) $ of the hyperbolic polynomials $ x \mapsto \ppie{k}{e}(x) $.  Here, $ e $ is viewed as fixed, and $ x $ as the variable. 

In this section, we sometimes need to view $ e $ as a variable, not just $ x $.  We need to differentiate once with respect to $ e $, and $ j $ times with respect to $ x $.  For this we introduce the notation $ D_e ( D^j \ppie{k}{e}(x)) $ to represent the form that assigns to pairs of tuples $ [u] $ and $ [v_1, \ldots, v_j ] $ the value 
\[  \smfrac{d}{ds} (D^j \ppie{i}{e+su} (x)[v_1, \ldots, v_j])|_{s=0} \; .  \]
(Here we differentiated at $ e $ in direction $ u $ only after differentiating at $ x $ in directions $ v_1, \ldots, v_j $, but the order of differentiation is immaterial, because $ (e,x) \mapsto \ppie{i}{e}(x) $ is a polynomial (in particular, is analytic)).  Thus, there is good reason to also denote the form by, say, $ D^j( D_e \ppie{k}{e}(x)) $.)

When acting on a specific pair of tuples $ [u] $ and $ [v_1, \ldots, v_j] $, we denote the assigned value by $ D_e( D^j \ppie{k}{e}(x)[v_1, \ldots, v_j])[u] $ (or by $ D^j( D_e \ppie{k}{e}(x)[u])[v_1, \ldots, v_j] $).  The form is multilinear in $ u, v_1, \ldots, v_j $, and is symmetric in $ v_1, \ldots, v_j $.

The subscript on ``$ D_e $'' does not refer to a specific hyperbolicity direction, but rather, to the derivative being taken with respect to the hyperbolicity direction.  Thus, for example, $ D_e( D^j \ppie{k}{e(t)}(x)) $ is the form that assigns to pairs $ [u] $ and $ [v_1, \ldots, v_j] $ the value
 $   \smfrac{d}{ds} (D^j \ppie{k}{e(t)+su} (x)[v_1, \ldots, v_j])|_{s=0} $.

We have occasion to fix the direction $ u $, in which case results a form on tuples $ [v_1, \ldots, v_j] $; specifically, 
\[   [v_1, \ldots, v_j] \mapsto D_e( D^j \ppie{k}{e}(x)[v_1, \ldots, v_j])[u] \; , \]
a form we denote by $ D_e( D^j \ppie{i}{e}(x))[u] $.  Similarly, if $ v_1, \ldots, v_i $ and $ u $ are fixed, we use $ D_e(D^j \ppie{k}{e}(x)[v_1, \ldots, v_i])[u] $ to denote the form
\[  [w_1, \ldots, w_{j-i}] \mapsto D_e( D^j \ppie{k}{e}(x)[v_1, \ldots, v_i, w_1, \ldots, w_{j-i}])[u] \; . \]

Time and again we rely on a relation between the forms $ D_e(D^j \ppie{k}{e}(x)) $ and $ D^{j+1} \ppie{k-1}{e}(x) $ when $ k \geq 1 $:  
\begin{align}
   D_e(D^j\ppie{k}{e  }(x))  & =   D_e( D^{j+k} p(x)[\underbrace{e, \ldots,e}_{ \textrm{$ k $ times}}]) \nonumber \\ & = k \,  D^{j+k} p(x) [\underbrace{e, \ldots,e}_{ \textrm{$ k-1 $ times}}] \quad \textrm{(by multilinearity and symmetry)} \nonumber \\ & = k \, D^{j+1} \ppie{k-1}{e}(x)  \; .  \label{e.h.a}
\end{align}
That is, for any $ \bar{e} $ and tuples $ [u] $ and $ [v_1, \ldots, v_j] $, 
\[ D_e( D^j \ppie{k}{ \bar{e}} (x)[v_1, \ldots, v_j])[u] = k \, D^{j+1} \ppie{k-1}{ \bar{e}} (x) [u, v_1, \ldots, v_j] \]
(a consequence being that the form $ D_e( D^j \ppie{k}{ \bar{e}}(x)) $  is symmetric in $ u, v_1, \ldots, v_j $, not just in $ v_1, \ldots, v_j $).   \vspace{5mm}

Here is the restatement of the result to which this section is devoted, \hyperlink{targ_main_thm_part_one}{Part I} of the Main Theorem:

\begin{thm}  \label{t.h.a}
Assume $ 1 \leq i \leq n-2 $ and let $ t \mapsto e(t) $ $ (0 \leq t < T) $ be a maximal trajectory generated by the dynamics $ \dot{e}(t) = \xie{i}{e(t)} - e(t) $ beginning at $ e(0)  \in \swath(i) \setminus \core(i) $.
\begin{enumerate}

\item The trajectory $ t \mapsto e(t) $ is bounded, and $ t \mapsto c^* \xie{i}{e(t)}  $ is strictly increasing, with $ \val $ (the optimal value of $ \hp $) as the limit.

\item If $ T = \infty $ then every limit point of the trajectory $ t \mapsto e(t) $ lies in $ \opt $.

\item If $ T < \infty $ then the trajectory $ t \mapsto e(t) $ has a unique limit point $ \bar{e} $ and $ \bar{e} \in \core(i) $; moreover, the path $ t \mapsto \xie{i}{e(t)} $ is bounded and each of its limit points lies in $ \opt $.
\end{enumerate} 
\end{thm}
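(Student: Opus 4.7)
The proof splits naturally into three phases, with the difficulty concentrated in Part (C).

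For boundedness of $t\mapsto e(t)$ and the monotonicity of $c^*e(t)$: since $e(t)$ is strictly feasible for $\hpie{i}{e(t)}$ but not its unique optimizer $\xie{i}{e(t)}$, we have $c^*\xie{i}{e(t)}<c^*e(t)$, so $\tfrac{d}{dt}c^*e(t)=c^*(\xie{i}{e(t)}-e(t))<0$. Hence $e(t)$ stays in the level set $\{x\in\feas:c^*x\le c^*e(0)\}$; by Proposition~\ref{t.d.c}, $\swath(i)\setminus\core(i)\ne\emptyset$ forces $\opt$ to be nonempty and bounded, and standard recession-cone reasoning then makes that level set compact, giving boundedness of the trajectory. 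For strict monotonicity $\tfrac{d}{dt}c^*\xie{i}{e(t)}>0$, I would apply the envelope theorem to $\valie{i}{e}=c^*\xie{i}{e}$. By (\ref{e.a.c}) only the constraint $\ppie{i}{e}(x)\ge0$ is active at $\xie{i}{e}$, with Lagrange multiplier $\lambda(e)>0$ (positivity forced by $c^*\notin\range(A^*)$). Using (\ref{e.h.a}) and $\dot e(t)=\xie{i}{e(t)}-e(t)$, the envelope theorem gives
\[
\tfrac{d}{dt}\valie{i}{e(t)}=-\lambda(t)\cdot i\cdot D\ppie{i-1}{e(t)}(\xie{i}{e(t)})[\xie{i}{e(t)}-e(t)].
\]
Combining the defining identity $D\ppie{i-1}{e}(x)[e]=\ppie{i}{e}(x)$ (which vanishes at $\xie{i}{e}$) with the Euler-type formula (\ref{e.c.f}) collapses the bracket to $(n-i+1)\,\ppie{i-1}{e(t)}(\xie{i}{e(t)})$, which is strictly negative by (\ref{e.d.b}) and (\ref{e.c.d}); hence the derivative is strictly positive.

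For Part (B): the monotone bounded functions $c^*e(t)$ and $c^*\xie{i}{e(t)}$ have limits $L'\ge\val$ and $L\le\val$. When $T=\infty$, integrating $c^*\dot e(t)=-(c^*e(t)-c^*\xie{i}{e(t)})$ yields
\[
\int_0^\infty\bigl(c^*e(s)-c^*\xie{i}{e(s)}\bigr)\,ds=c^*e(0)-L'<\infty,
\]
and the integrand is nonnegative and decreasing (since $c^*e$ decreases while $c^*\xie{i}{e}$ increases), forcing it to tend to $0$. Thus $L=L'=\val$, and any limit point $\bar e$ of $e(t)$ lies in the closed set $\feas$ with $c^*\bar e=\val$, i.e., $\bar e\in\opt$.

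Part (C) is the hard case. Once one proves that $t\mapsto\xie{i}{e(t)}$ is \emph{bounded} on $[0,T)$, uniqueness of the limit is essentially automatic: then $\dot e(t)$ is bounded and $T<\infty$, so the trajectory is Lipschitz and hence Cauchy as $t\to T^-$, producing a unique limit $\bar e$. To conclude $\bar e\in\core(i)$, one must rule out (i) $\bar e\in\relint(\feas)\setminus\swath(i)$, i.e., $\optie{i}{\bar e}=\emptyset$, and (ii) $\bar e\in\partial\Lambdap\cap\{Ax=b\}$. Case (i) can be excluded by continuity of the defining polynomials: if $\valie{i}{\bar e}=-\infty$, a point $\tilde x$ with $c^*\tilde x$ far below $L$ that is feasible for $\hpie{i}{\bar e}$ would remain feasible for $\hpie{i}{e(t)}$ for $t$ near $T$ (using characterization (\ref{e.c.a})), contradicting $c^*\xie{i}{e(t)}\uparrow L$. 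Once $\bar e\in\core(i)$ is established, the limit claim of Part (A) for $T<\infty$ follows, and the limit assertion about $\xie{i}{e(t)}$ comes from extracting any convergent subsequence and noting that a limit $\bar x$ is optimal for $\hpie{i}{\bar e}$ (by lower semicontinuity of the minimum), hence in $\optie{i}{\bar e}=\opt$.

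\textbf{Main obstacle.} The bulk of the work lies in two auxiliary facts inside Part (C): boundedness of $t\mapsto\xie{i}{e(t)}$ and exclusion of case (ii). For boundedness I would argue by contradiction: take an unbounded subsequence, extract a rescaling limit $v$, and show $v$ must lie in the recession cone of $\Lambdaiep{i}{\bar e}$ with $Av=0$ and $c^*v\le0$ (since $c^*\xie{i}{e(t)}$ is bounded), which contradicts regularity of $\Lambdaiep{i}{\bar e}$ (Theorem~\ref{t.d.a}(D)) together with boundedness of $\opt$. Ruling out $\bar e\in\partial\Lambdap$ looks the most delicate because then $\bar e$ is not even a legitimate hyperbolicity direction, so none of the later theorems apply at the boundary; the natural approach is to monitor a boundary-sensitive scalar such as $p(e(t))$ or a normalized quantity involving $\ppie{i-1}{e(t)}(\xie{i}{e(t)})$ (which we already know is strictly negative), and exploit the fact that $\xie{i}{e(t)}\in\Lambdaiepp{i+1}{e(t)}$ (by (\ref{e.a.c})) to show the flow cannot drive $e(t)$ transversally into $\partial\Lambdap$ in finite time. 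I expect this is precisely the technical core the author describes as ``subtle and long.''
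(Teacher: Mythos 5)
Parts (A) and (B) of your proposal are essentially correct and follow the paper's route: your envelope-theorem computation produces exactly the quantity $-\lambda(t)\, i\,(n-i+1)\,\ppie{i-1}{e(t)}(\xie{i}{e(t)})>0$ that the paper obtains by differentiating the identity $\ppie{i}{e(t)}(\xie{i}{e(t)})\equiv 0$ and invoking the first-order conditions, and your integration argument for $T=\infty$ is the paper's ``immediate'' step made explicit. Your instinct for keeping $e(t)$ off $\partial\Lambdap$ by monitoring $p(e(t))$ is also right, and is exactly the paper's Lemma~\ref{t.h.i} ($\smfrac{d}{dt}p(e(t))\ge -n\,p(e(t))$ because $Dp(e(t))[\xie{i}{e(t)}]\ge 0$); that part is not where the difficulty lies.

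The genuine gap is your proposed proof that $t\mapsto\xie{i}{e(t)}$ is bounded when $T<\infty$, and everything in Part (C) hinges on it. You argue that an unbounded subsequence yields a rescaling limit $v$ in the recession cone of $\Lambdaiep{i}{\bar e}$ with $Av=0$ and $c^*v\le 0$, and that this ``contradicts regularity of $\Lambdaiep{i}{\bar e}$ together with boundedness of $\opt$.'' It contradicts neither: a regular cone can certainly contain such a ray, and the existence of such a $v$ is precisely what happens when $\optie{i}{\bar e}=\emptyset$, i.e., when $\bar e\in\Lambdapp\setminus\swath(i)$ --- a scenario that is entirely consistent with all the soft structure available at this point (indeed, the paper's Lemma~\ref{t.h.g}(B) shows that an unbounded sequence of optimizers forces exactly the conclusion $\optie{i}{\bar e}=\emptyset$, not a contradiction). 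There is also a circularity: your uniqueness of $\bar e$ rests on boundedness of $\xie{i}{e(t)}$, yet your boundedness argument is phrased at the single limit point $\bar e$; a priori, with $\dot e$ possibly unbounded, the limit set can have more than one point, and the paper must handle that case separately (via an arc-length argument showing $\|\xie{i}{e(t_j)}\|\to\infty$ near every limit point). The paper's actual resolution is quantitative and occupies most of \S\ref{s.h.b}: if the path is unbounded it establishes $\liminf_{t\to T}\|\xie{i}{e(t)}\|=\infty$ together with a uniform negative-definite growth bound $\ppie{i-1}{e(t)}(\xie{i}{e(t)})\le\alpha\|\xie{i}{e(t)}\|^{\,n-i+1}$ with $\alpha<0$ (Lemmas~\ref{t.h.g}--\ref{t.h.h}, Proposition~\ref{t.h.d}), and then derives a contradiction from a differential inequality for $\ln\bigl((-\ppie{i-1}{e(t)}(x(t)))^{n-i}/(\ppie{i+1}{e(t)}(x(t)))^{n-i+1}\bigr)$ whose proof (Proposition~\ref{t.h.e}) uses the concavity of $q_e=\ppie{i}{e}/\ppie{i+1}{e}$ from Theorem~\hyperlink{targ_thm_seven}{7} and the first-order system of Corollary~\hyperlink{targ_cor_nine}{9}. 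None of that machinery is anticipated in your proposal, so Part (C) as written does not go through.
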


\subsection{The case $ T = \infty $} \label{s.h.a}  As mentioned, our proof of Theorem~\ref{t.h.a}  when $ T = \infty $ is much easier than when $ T < \infty $.  In this subsection, we focus on the case $ T = \infty $, although the following proposition is important also to the case $ T < \infty $.

\begin{prop}  \label{t.h.b}
Under the hypotheses of Theorem~\ref{t.h.a}, the trajectory $ t \mapsto e(t) $ is bounded, and $ t \mapsto c^* \xie{i}{e(t)} $ is strictly increasing.
\end{prop}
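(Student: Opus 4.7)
The plan is to handle the two assertions separately. For the boundedness of $e(t)$, first observe that $A\dot{e}(t)=A\xie{i}{e(t)}-Ae(t)=b-b=0$, so $Ae(t)=b$ for $0\le t<T$; together with $e(t)\in \swath(i)\setminus \core(i)\subseteq\relint(\feas)$ (open by Theorem~\ref{t.e.a}) this gives $e(t)\in\feas$. Then compute
\[
  \tfrac{d}{dt}c^*e(t)=c^*(\xie{i}{e(t)}-e(t))\le 0,
\]
strictly because $e(t)\in\feasie{i}{e(t)}$ is not optimal for $\hpie{i}{e(t)}$ (it lies in $\Lambdapp$ while $\xie{i}{e(t)}\notin\Lambdap$ by (\ref{e.d.b})). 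Hence $c^*e(t)\le c^*e(0)$. Now the proof of Proposition~\ref{t.d.c} established that, because $\optie{i}{e(0)}=\{\xie{i}{e(0)}\}$ is nonempty and bounded, every level set $\{x\in\feasie{i}{e(0)}:c^*x\le\alpha\}$ is bounded. Since $\feas\subseteq\feasie{i}{e(0)}$, the trajectory is trapped in such a bounded level set.

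For strict monotonicity of $c^*\xie{i}{e(t)}$, I would differentiate $\xie{i}{e}$ implicitly. Write $x(e):=\xie{i}{e}$, which is analytic in $e$ on $\swath(i)\setminus\core(i)$ (Theorem~\ref{t.e.a}). The only active inequality at $x(e)$ is $\ppie{i}{e}(x)\ge 0$ (since $x(e)\in\Lambdaiepp{i+1}{e}$ by (\ref{e.d.b})), so KKT reads
\[
  Ax(e)=b,\quad \ppie{i}{e}(x(e))=0,\quad \lambda(e)c^*=D\ppie{i}{e}(x(e))+y^*(e)A,
\]
with $\lambda(e)>0$ because $c^*\notin\range(A^*)$. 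Differentiating in $e$ in direction $u$ gives $A\,D_ex(e)[u]=0$ and, using the identity $D_e\ppie{i}{e}(x)[u]=i\,D\ppie{i-1}{e}(x)[u]$ from (\ref{e.h.a}),
\[
  i\,D\ppie{i-1}{e}(x)[u]+D\ppie{i}{e}(x)\bigl[D_ex(e)[u]\bigr]=0.
\]
Substituting the KKT expression for $D\ppie{i}{e}(x)$ and using $A\,D_ex(e)[u]=0$ yields the key identity
\[
  c^*D_ex(e)[u]=-\tfrac{i}{\lambda(e)}D\ppie{i-1}{e}(x(e))[u].
\]

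Applying this with $u=\dot e(t)=x(e(t))-e(t)$, and invoking homogeneity ($D\ppie{i-1}{e}(x)[x]=(n-i+1)\ppie{i-1}{e}(x)$ and $D\ppie{i-1}{e}(x)[e]=\ppie{i}{e}(x)=0$), gives
\[
  \tfrac{d}{dt}c^*\xie{i}{e(t)}=-\tfrac{i(n-i+1)}{\lambda(e(t))}\,\ppie{i-1}{e(t)}(\xie{i}{e(t)}).
\]
Finally, (\ref{e.d.b}) and $A\xie{i}{e}=b$ place $\xie{i}{e}$ in $\Lambdaiep{i}{e}\setminus\Lambdaiep{i-1}{e}$, so (\ref{e.c.d}) forces $\ppie{i-1}{e(t)}(\xie{i}{e(t)})<0$, and the displayed derivative is strictly positive.

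The principal obstacle is making the implicit differentiation rigorous and extracting the clean scalar identity: one must verify $\lambda(e)>0$, identify the correct active constraint, keep track of the interplay between $D_e$ and $D$ via (\ref{e.h.a}), and then rely on homogeneity together with the sign information from (\ref{e.c.d}) to conclude. Everything else (bounded level sets, $Ae(t)\equiv b$, non-increase of $c^*e(t)$) is routine.
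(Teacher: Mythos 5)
Your proposal is correct and follows essentially the same route as the paper: boundedness via the bounded level sets of $\feasie{i}{e(0)}$ together with the monotone decrease of $c^*e(t)$, and strict monotonicity of $c^*\xie{i}{e(t)}$ by differentiating the active constraint $\ppie{i}{e}(\xie{i}{e})=0$ and the first-order condition, using (\ref{e.h.a}), homogeneity (\ref{e.c.f}), $A\dot{x}=0$, and the sign $\ppie{i-1}{e(t)}(\xie{i}{e(t)})<0$ from (\ref{e.c.d}). The paper merely organizes the same computation by differentiating directly in $t$ rather than implicitly in $e$, arriving at the same identity $c^*\dot{x}(t)=-\tfrac{i(n-i+1)}{\lambda(t)}\ppie{i-1}{e(t)}(x(t))>0$.
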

\begin{proof}   Since $ e(0) \in \swath(i) \setminus \core(i) $, the set $ \optie{i}{e(0)} $ is bounded (indeed, consists of the single point $ \xie{i}{e(0)} $), and hence so are the level sets $ \{ x \in \feasie{i}{e(0)}: c^* x \leq \alpha \} $ ($ \alpha \in \mathbb{R} $).  As $ \feas \subseteq \feasie{i}{e(0)} $, the level sets $ \{ x \in \feas: c^* x \leq \alpha \} $ are bounded, too.  Hence, since the trajectory $ t \mapsto e(t) $ remains in $ \feas $, and since (clearly) $ t \mapsto c^* e(t) $ is decreasing (strictly monotonically), the trajectory lies entirely within a bounded region, that is, the trajectory is bounded.

It remains to prove that $ t \mapsto c^* \xie{i}{e(t)} $ is strictly increasing.  To ease notation, let $ x(t) := \xie{i}{e(t)} $.

From $  t \mapsto \ppie{i}{e(t)}(x(t)) \equiv  0 $ we find  that
\begin{align*}
  0 & =  \smfrac{d}{dt} \ppie{i}{e(t)}(x(t)) \\
& =  D\ppie{i}{e(t)}(x(t))[ \dot{x}(t)]  + D_e(\ppie{i}{e(t)}(x(t))[ \dot{e}(t)] \\
   & =  D\ppie{i}{e(t)}(x(t))[ \dot{x}(t)] + i D \ppie{i-1}{e}(x(t)) [ \dot{e}(t)] \quad \textrm{(using (\ref{e.h.a}))} \\
   & = D\ppie{i}{e(t)}(x(t))[ \dot{x}(t)] + i \left( (n-i+1) \ppie{i-1}{e(t)}(x(t)) - \ppie{i}{e(t)}(x(t)) \right) \; ,    
\end{align*}
where the final equality is due to $ \dot{e}(t) = x(t) -e(t) $ and, by (\ref{e.c.f}),
 \[  D \ppie{i-1}{e(t)}(x(t))[x(t)] = (n-i+1) \ppie{i-1}{e(t)}(x(t)) \; . \] 
Thus, since $ \ppie{i}{e(t)}(x(t)) = 0 $ and $ \ppie{i-1}{e(t)}(x(t)) < 0 $ (by (\ref{e.c.d})), we have
\begin{equation}  \label{e.h.b}
  D\ppie{i}{e(t)}(x(t))[ \dot{x}(t)] > 0 \; . 
\end{equation}

On the other hand, $ x(t) $ satisfies the first-order condition 
\[    \lambda(t) c^* = D\ppie{i}{e(t)}(x(t)) + y(t)^* A  \qquad 
\textrm{for some $ \lambda(t) > 0 $ and $ y(t)^* $} \; .  
\] 
 Applying both sides to $ \dot{x}(t) $, using $ A \dot{x}(t) = 0 $   and substituting (\ref{e.h.b}), shows $ c^* \dot{x}(t) > 0 $. Thus, $ t \mapsto c^* x(t) $ is strictly increasing.
\end{proof} 

We are now in position to easily prove Theorem~\ref{t.h.a}  when $ T = \infty $.

\begin{thm}  \label{t.h.c}
Under the hypotheses of Theorem~\ref{t.h.a}, if $ T = \infty $ then
\begin{itemize}

\item $ t \mapsto e(t) $ is bounded, with limit points lying in $ \opt $, and

\item $ t \mapsto c^* \xie{i}{e(t)} $ increases strictly monotonically to $ \val $, the optimal value of $ \hp $.

\end{itemize}
\end{thm}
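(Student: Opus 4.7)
The plan is to bootstrap from Proposition~\ref{t.h.b}: boundedness of the trajectory and strict monotonicity of $t\mapsto c^*\xie{i}{e(t)}$ are already in hand, so what remains is to identify the limit of $c^*\xie{i}{e(t)}$ with $\val$ and to force every limit point of $e(t)$ into $\opt$. The engine driving everything will be the scalar identity $\tfrac{d}{dt} c^* e(t) = c^*(\xie{i}{e(t)}-e(t))$ combined with the fact that $\hpie{i}{e(t)}$ is a relaxation of $\hp$.

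First I would show $t\mapsto c^* e(t)$ is strictly decreasing: since $e(t)\in\Lambdapp\subset\Lambdaiepp{i}{e(t)}$ it is an interior (hence non-boundary) point of $\feasie{i}{e(t)}$, while $\xie{i}{e(t)}\in\partial\Lambdaiep{i}{e(t)}$ by (\ref{e.a.c}); uniqueness of the minimizer then gives $c^*\xie{i}{e(t)}<c^*e(t)$. As $e(t)\in\feas$ we have $c^*e(t)\geq\val$, so $c^*e(t)\searrow L$ for some $L\geq\val$. In parallel, $c^*\xie{i}{e(t)}$ is increasing by Proposition~\ref{t.h.b}, and is bounded above by $\val$ because $\hpie{i}{e(t)}$ is a relaxation of $\hp$ (its optimal value cannot exceed $\val$); hence $c^*\xie{i}{e(t)}\nearrow M$ for some $M\leq\val$.

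Next I would integrate the scalar identity to obtain
\[
\int_0^{\infty}\bigl(c^*e(t)-c^*\xie{i}{e(t)}\bigr)\,dt \;=\; c^*e(0)-L \;<\;\infty.
\]
The integrand is nonnegative and continuous, and an integrable nonnegative continuous function on $[0,\infty)$ necessarily has $\liminf$ equal to $0$ (otherwise it would be bounded below by some $\varepsilon>0$ eventually, making the integral diverge). Thus some sequence $t_k\to\infty$ satisfies $c^*e(t_k)-c^*\xie{i}{e(t_k)}\to 0$. Since both $c^*e(t)$ and $c^*\xie{i}{e(t)}$ converge on the full line (being monotone and bounded) to $L$ and $M$ respectively, this forces $L=M$, and the sandwich $L\geq\val\geq M$ collapses to $L=M=\val$.

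Finally, $c^*\xie{i}{e(t)}\nearrow\val$ is the second bullet. For the first, let $\bar e$ be any limit point of the bounded trajectory, say $e(t_k)\to\bar e$; closedness of $\feas$ gives $\bar e\in\feas$, and continuity of $c^*$ with $c^*e(t)\to L$ gives $c^*\bar e=L=\val$, so $\bar e\in\opt$. The only step that is not completely mechanical is the integral-to-subsequence passage that yields $L=M$; everything else is bookkeeping around Proposition~\ref{t.h.b} together with the one-line relaxation inequality $c^*\xie{i}{e(t)}\leq\val$.
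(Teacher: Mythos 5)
Your proof is correct and follows essentially the same route as the paper: reduce to Proposition~\ref{t.h.b}, sandwich $c^*\xie{i}{e(t)} \leq \val \leq c^* e(t)$ using feasibility and the relaxation property, and exploit the scalar identity $\smfrac{d}{dt}\, c^* e(t) = c^*\xie{i}{e(t)} - c^* e(t)$. The only (harmless) difference is that where the paper dismisses the final step as ``immediate'' (implicitly a differential-inequality comparison), you make it explicit by integrating the gap and using integrability of a nonnegative continuous function to force the two monotone limits to coincide.
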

\begin{proof} 
Due to Proposition~\ref{t.h.b}, it remains only to prove that both $ t \mapsto c^* e(t) $ and $ t \mapsto c^* \xie{i}{e(t)} $ converge to $ \val $ as $ t \rightarrow \infty $.  However, the convergence of $ c^* e(t) $ to $ \val $ is immediate from 
\begin{equation}  \label{e.h.c}
T = \infty, \quad   c^* \xie{i}{e(t)} \leq \val \leq c^* e(t) \quad \textrm{and} \quad   \dot{e}(t) = \xie{i}{e(t)} - e(t)  \; . \end{equation}
Due to the monotonicity of $ t \mapsto c^* \xie{i}{e(t)} $ (by Proposition~\ref{t.h.b}), the convergence of $ c^* \xie{i}{e(t)} $ to $ \val $ also is immediate from (\ref{e.h.c})  (indeed, otherwise at some time there would occur $ c^* e(t) < \val $, contradicting $ e(t) \in \feas $).  
\end{proof}    

\subsection{The case $ T < \infty $}  \label{s.h.b}
 
For the challenging case $ T < \infty $, we split most of the analysis into two propositions.

\begin{prop}  \label{t.h.d}
Under the hypotheses of Theorem~\ref{t.h.a}, if $ T < \infty $, then letting $ x(t) := \xie{i}{e(t)} $,
either
\begin{enumerate}

\item  The trajectory $ t \mapsto e(t) $ has a unique limit point, the limit point is contained in $ \core(i) $, and the path $ t \mapsto x(t) $ is bounded, with all limit points contained in $ \opt $.

\end{enumerate}
or
\begin{enumerate}
\addtocounter{enumi}{1}

\item   It holds that $ \liminf_{t \rightarrow T} \|x(t)\| = \infty $.  Moreover, there exists a constant $ \alpha < 0 $  such that for all $ 0 \leq t < T $,
\[  \ppie{i-1}{e(t)}(x(t)) \leq \alpha \|x(t)\|^{n-i+1} \; . \]
\end{enumerate} 
\end{prop}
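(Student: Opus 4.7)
I would split the argument according to whether $x(t) := \xie{i}{e(t)}$ is bounded on $[0, T)$: the bounded branch will yield alternative (A), the unbounded branch alternative (B).

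\emph{Bounded branch.} Assume $\sup_{t<T} \|x(t)\| < \infty$. Combined with Proposition~\ref{t.h.b} (boundedness of $e(t)$), this makes $\dot e(t) = x(t) - e(t)$ bounded on $[0,T)$, so $e(\cdot)$ is Lipschitz; since $T<\infty$ a unique limit $\bar e \in \Lambdap \cap \{e : Ae = b\}$ exists. Because $\swath(i) \setminus \core(i)$ is open in $\relint(\feas)$ by Theorem~\ref{t.e.a}, the termination of the trajectory at $T$ forces $\bar e$ to sit on the relative boundary of this open set. Along a subsequence $t_n \to T$ with $x(t_n) \to \bar x$ (extracted by compactness), I would pass to the limit in the first-order optimality condition $\lambda(t) c^* = D\ppie{i}{e(t)}(x(t)) + y(t)^* A$; the useful rescaling, obtained by testing against $e(t) - x(t)$ and invoking the homogeneity identity (\ref{e.c.f}), is $\lambda(t)\, c^*(e(t)-x(t)) = \ppie{i+1}{e(t)}(x(t))$, which is well-behaved in the limit provided $\bar e \in \Lambdapp$. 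One thereby obtains $\bar x \in \optie{i}{\bar e}$, so $\bar e \in \swath(i)$; combined with $\bar e$ lying on the relative boundary of $\swath(i)\setminus\core(i)$, this forces $\bar e \in \core(i)$, and hence $\bar x \in \optie{i}{\bar e} = \opt$, giving alternative (A).

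\emph{Unbounded branch.} Assume $\sup_{t<T} \|x(t)\| = \infty$, aiming for (B). Set $\hat x(t) := x(t)/\|x(t)\|$. By homogeneity of $\ppie{i-1}{e}$ of degree $n-i+1$, the desired inequality reduces to showing $\ppie{i-1}{e(t)}(\hat x(t)) \leq \alpha$ for some $\alpha < 0$ and all $t$ near $T$. I argue by contradiction: suppose $\ppie{i-1}{e(t_n)}(\hat x(t_n)) \to 0$ along some $t_n \to T$. By compactness, and by the bounded branch (which would send any bounded subsequence of $\|x(t_n)\|$ into alternative (A) and thus contradict the failure of (A) that is implicit in the current branch), we may further assume $e(t_n) \to \bar e$, $\hat x(t_n) \to \hat x_*$ with $\|\hat x_*\|=1$, and $\|x(t_n)\| \to \infty$. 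Dividing $A x(t_n) = b$, the bounded quantity $c^* x(t_n)$, and the relations $\ppie{j}{e(t_n)}(x(t_n)) \geq 0$ (with equality at $j=i$) by the appropriate powers of $\|x(t_n)\|$ and passing to the limit yields $A \hat x_* = 0$, $c^* \hat x_* = 0$, and $\ppie{j}{\bar e}(\hat x_*) \geq 0$ for $j \geq i-1$ with equalities at $j = i-1$ and $j=i$. By (\ref{e.c.a}) this places $\hat x_*$ in $\Lambdaiep{i-1}{\bar e}$: a nonzero recession direction of $\feasie{i-1}{\bar e}$ orthogonal to $c^*$. The contradiction should come from pushing the rescaled first-order condition for $\xie{i}{e(t_n)}$ to the limit and invoking the standing assumption $c^* \notin \range A^*$. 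Finally, $\liminf \|x(t)\| = \infty$ in (B) (rather than merely $\limsup$) falls out of the same dichotomy: any bounded subsequence of $\|x(t_n)\|$ would return us to the bounded branch and deliver alternative (A).

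\emph{Main obstacles.} The two delicate steps are (a) ruling out $\bar e \in \partial \Lambdap$ in the bounded branch so that the first-order conditions pass to the limit cleanly, and (b) converting the candidate recession direction $\hat x_*$ in the unbounded branch into an explicit contradiction. Both appear to demand pairing the variational characterization of $\xie{i}{e}$ from Corollary~\ref{t.g.d} with the standing hypothesis $c^* \notin \range A^*$, which should rule out the pathological limit geometries these steps would otherwise permit.
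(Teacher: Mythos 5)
Your overall architecture (dichotomy on boundedness of $t \mapsto x(t)$, normalization to a limit direction in the unbounded case) resembles the paper's, but two of the steps you flag as ``obstacles'' are genuine gaps, and the tools you propose for them would not close them. First, both branches --- not only the bounded one --- need to know that every limit point $\bar e$ of the trajectory lies in $\Lambdapp$ rather than on $\partial\Lambdap$; otherwise the derivative cones $\Lambdaiep{j}{\bar e}$ and the characterization (\ref{e.c.a}) at $\bar e$ are unavailable and none of your limit arguments can be run. Pairing Corollary~\ref{t.g.d} with $c^* \notin \range(A^*)$ does not give this. The paper's Lemma~\ref{t.h.i} gets it from a short differential inequality along the flow: $\smfrac{d}{dt}\,p(e(t)) = Dp(e(t))[x(t)] - n\,p(e(t)) \ge -n\,p(e(t))$, because $x(t) \in \Lambdaiep{i}{e(t)} \subseteq \Lambdaiep{n-1}{e(t)}$ forces $Dp(e(t))[x(t)] \ge 0$; hence $p(e(t)) \ge p(e(0))e^{-nt} > 0$ for all $t < T < \infty$.

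Second, in the unbounded branch the contradiction does not come from the first-order conditions. Passing the rescaled stationarity condition to the limit merely yields $\bar\lambda\, c^* = D\ppie{i}{\bar e}(\hat x_*) + \bar y^* A$ with $\bar\lambda \ge 0$, which is consistent with $c^* \notin \range(A^*)$ and contradicts nothing. The working mechanism is geometric: under your contradiction hypothesis $\hat x_*$ satisfies $\ppie{i}{\bar e}(\hat x_*) = 0$ (so $\hat x_* \in \partial\Lambdaiep{i}{\bar e}$) and $\ppie{i-1}{\bar e}(\hat x_*) = 0$ (so $\hat x_* \in \Lambdaiep{i-1}{\bar e}$), whence $\hat x_* \in \Lambdaiep{i-1}{\bar e} \cap \partial\Lambdaiep{i}{\bar e} = \Lambdap \cap \partial\Lambdaiep{i}{\bar e}$ by Theorem~\ref{t.d.a}(A); with $A\hat x_* = 0$ and $c^*\hat x_* = 0$ this makes $\opt$ unbounded, contradicting the boundedness of $\opt$ that Proposition~\ref{t.h.b} extracts from $\swath(i)\setminus\core(i) \ne \emptyset$. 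Two further points in this branch need repair: (i) a bounded subsequence of $\|x(t_n)\|$ does not ``return us to the bounded branch'' (that branch assumed $\sup_{t<T}\|x(t)\| < \infty$, which one bounded subsequence does not restore), so $\liminf_{t\to T}\|x(t)\| = \infty$ requires its own argument (the paper's Lemma~\ref{t.h.g}(A)--(B): a finite limit point of $x(t_n)$ would force $\optie{i}{\bar e} \ne \emptyset$ while an unbounded sequence forces $\optie{i}{\bar e} = \emptyset$); and (ii) when the trajectory has more than one limit point you must produce a single $\alpha < 0$ valid for all $t$, which the paper achieves by showing every limit point is approached along times with $\|\dot e(t_j)\| = \|x(t_j)-e(t_j)\| \to \infty$ (an arc-length argument exploiting $T<\infty$) and then covering the compact limit set by finitely many of the uniform neighborhoods supplied by Lemma~\ref{t.h.h}.
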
   
\vspace{1mm}

\begin{prop}  \label{t.h.e}
Under the hypotheses of Theorem~\ref{t.h.a}, and regardless of whether $ T $ is finite, letting $ x(t) := \xie{i}{e(t)} $, for all $ 0 \leq t < T $ we have
\[  \frac{d}{dt} \,  \ln \frac{\left(  -\ppie{i-1}{e(t)}(x(t)) \right)^{n-i}}{\left(  \ppie{i+1}{e(t)}(x(t)) \right) ^{n-i+1}}   \leq 
C_1 \frac{\ppie{i-1}{e(t)}(x(t))}{\ppie{i+1}{e(t)}(x(t))} + C_2 \frac{\ppie{i-2}{e(t)}(x(t))}{\ppie{i-1}{e(t)}(x(t))} + C_3 \; , \]
where
\[  C_1 = i(n-i+1)^2 \; , \quad  C_2 = (i-1)(n-i)(n-i+2) \quad \textrm{and} \quad   C_3 = 2n-i+1 \]
  (and where for the case $ i = 1 $ we define $ \ppie{-1}{e(t)} \equiv 0 $).
\end{prop}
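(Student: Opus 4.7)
The plan has three parts: a chain-rule computation, an algebraic identity obtained by differentiating the first-order optimality condition, and a tangential-direction inequality following from Theorem~\ref{t.e.c}(C).

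Write $a_k(t):=\ppie{k}{e(t)}(x(t))$ and $b_k(t):=D\ppie{k}{e(t)}(x(t))[\dot{x}(t)]$. The chain rule, combined with identity~(\ref{e.h.a}), $\dot{e}=x-e$, and the homogeneity identities from~(\ref{e.c.f}), gives
\[
\dot{a}_k(t)=b_k(t)+k(n-k+1)\,a_{k-1}(t)-k\,a_k(t).
\]
Specializing to $k=i\pm 1$, using $a_i\equiv 0$ and $b_i=-i(n-i+1)a_{i-1}$ from~(\ref{e.h.b}), and expanding the target logarithmic derivative as $(n-i)\dot{a}_{i-1}/a_{i-1}-(n-i+1)\dot{a}_{i+1}/a_{i+1}$, the homogeneous and $-k\,a_k$ contributions reproduce exactly the $C_2\,a_{i-2}/a_{i-1}+C_3$ portion of the claim. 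The remaining task is to establish the key estimate
\[
(n-i)\frac{b_{i-1}(t)}{a_{i-1}(t)}-(n-i+1)\frac{b_{i+1}(t)}{a_{i+1}(t)}\le i(n-i+1)^{2}\frac{a_{i-1}(t)}{a_{i+1}(t)}.
\]

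To handle this, the central idea is to differentiate the optimality condition $\lambda(t)c^{*}=D\ppie{i}{e(t)}(x(t))+y(t)^{*}A$ in $t$, apply the resulting linear-functional identity successively to the three test vectors $e(t)$, $x(t)$, $\dot{x}(t)$ (using $Ae=b$, $Ax=b$, $A\dot{x}=0$, and the scalar relation $\lambda c^{*}(x-e)=-a_{i+1}$ obtained from the undifferentiated condition), and then eliminate the unknowns $\dot{\lambda}(t)$ and $\dot{y}(t)^{*}$. I expect this to leave one clean identity expressing $(n-i)b_{i-1}$ as the sum of the right-hand side of the key estimate (up to a factor of $a_{i-1}$) and a term $-\tfrac{1}{i}D^{2}\ppie{i}{e(t)}(x(t))[\dot{x}(t),\dot{x}(t)]$. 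Substituting this into the key estimate reduces the required inequality to
\[
D^{2}\ppie{i}{e(t)}(x(t))[\dot{x}(t),\dot{x}(t)]\le -\,\frac{2\,i(n-i+1)\,a_{i-1}(t)\,b_{i+1}(t)}{a_{i+1}(t)}.
\]

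The main obstacle is proving this last inequality despite $b_{i+1}$ having no obvious sign. The approach is to decompose $\dot{x}(t)=\alpha(t)\,e(t)+v(t)$, where the scalar $\alpha(t)=-i(n-i+1)a_{i-1}(t)/a_{i+1}(t)$ is forced by the requirement $D\ppie{i}{e(t)}(x(t))[v(t)]=0$, so that $v(t)$ lies in the tangent space to $\partial\Lambdaiep{i}{e(t)}$ at $x(t)$. Expanding both sides of the reduced inequality via this decomposition, using $D^{2}\ppie{i}{e}(x)[e,e]=a_{i+2}$ and $D^{2}\ppie{i}{e}(x)[e,v]=D\ppie{i+1}{e}(x)[v]$, the cross terms involving $D\ppie{i+1}{e}(x)[v]$ cancel and the inequality collapses to
\[
D^{2}\ppie{i}{e(t)}(x(t))[v(t),v(t)]\le \alpha(t)^{2}\,a_{i+2}(t).
\]
The right-hand side is nonnegative because $a_{i+2}(t)>0$ (via the nesting $\Lambdaiepp{i+1}{e(t)}\subseteq\Lambdaiepp{i+2}{e(t)}$), while the left-hand side is nonpositive by Theorem~\ref{t.e.c}(C), so the inequality holds automatically; the degenerate case $v\in\reals\cdot x$ is immediate from $D^{2}\ppie{i}{e}(x)[x,x]=(n-i)(n-i-1)a_{i}=0$.
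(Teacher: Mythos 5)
Your proposal is correct — I checked the constants, and they all come out right, including the crucial factor of $2$ in your reduced inequality $D^2\ppie{i}{e}(x)[\dot x,\dot x]\le 2\alpha\,D\ppie{i+1}{e}(x)[\dot x]$ (without that factor the cross terms would not cancel in your final decomposition). But your key step takes a genuinely different route from the paper. The paper differentiates the first-order condition of the \emph{reformulated} problem of Corollary~\ref{t.g.d} (this is Lemma~\ref{t.h.k}, which in turn needs Lemma~\ref{t.h.j}), and its sign input is the global concavity of $q_e=\ppie{i}{e}/\ppie{i+1}{e}$ on $\Lambdaiepp{i+1}{e}$ (Theorem~\ref{t.g.b}): it discards the term $D^2q_{e}(x)[\dot x,\dot x]\le 0$ outright, and concavity is needed in \emph{all} directions there, since $\dot x$ is not tangent to $\partial\Lambdaiep{i}{e}$ at $x$ (indeed $D\ppie{i}{e}(x)[\dot x]=-i(n-i+1)\ppie{i-1}{e}(x)>0$). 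You instead differentiate the conic first-order condition $\lambda c^*=D\ppie{i}{e}(x)+y^*A$ directly and use only the \emph{tangential} negative-semidefiniteness of $D^2\ppie{i}{e}(x)$ from Theorem~\ref{t.e.c}(C), paying for the weaker hypothesis with the decomposition $\dot x=\alpha e+v$ and letting the positivity of $D^2\ppie{i}{e}(x)[e,e]$ absorb the normal component. So your argument bypasses Theorems~\ref{t.g.b}--\ref{t.g.c}, Corollary~\ref{t.g.d} and Lemmas~\ref{t.h.j}--\ref{t.h.k} entirely, relying on a strictly weaker curvature fact that is already required for Theorem~\ref{t.e.a}; the paper's route buys a cleaner intermediate identity and reuses machinery it needs anyway for Part~II of the Main Theorem. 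Two small points for a full write-up: differentiability of $t\mapsto(\lambda(t),y(t)^*)$ should be noted (it follows from analyticity of $t\mapsto e(t),\xie{i}{e(t)}$ together with surjectivity of $A$ and $c^*\notin\range(A^*)$, exactly as for $w(t)^*$ in Lemma~\ref{t.h.k}); and for $i=n-2$ the quantity $D^2\ppie{i}{e}(x)[e,e]=D^np(x)[e,\ldots,e]=n!\,p(e)$ is a positive constant rather than a value of one of the cone-defining polynomials $\ppie{j}{e}$, $j\le n-1$, so the appeal to the nesting of cones should be replaced by this direct observation in that case.
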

\vspace{1mm}

Before proving the two propositions, we show how they complete the proof of Theorem~\ref{t.h.a}.

\begin{thm}  \label{t.h.f}
Under the hypotheses of Theorem~\ref{t.h.a}, if $ T < \infty $ then
\begin{itemize}

\item $ t \mapsto e(t) $ has a unique limit point, the limit point is contained in $ \core(i) $, and
\item $ t \mapsto \xie{i}{e(t)} $ is bounded, having all limit points contained in $ \opt $; \\moreover, $ t \mapsto c^*\xie{i}{e(t)} $ increases strictly monotonically (to $ \val $, the optimal value of $ \hp $).
\end{itemize}
\end{thm}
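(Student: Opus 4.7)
The plan is to invoke the dichotomy of Proposition~\ref{t.h.d} and to rule out case~(B) by playing its lower bound on $-\ppie{i-1}{e(t)}(\xie{i}{e(t)})$ against the differential inequality of Proposition~\ref{t.h.e}. Once (B) is ruled out, case~(A) immediately yields the first two bullets of the theorem. For the third bullet, $t \mapsto c^* x(t)$ (with $x(t) := \xie{i}{e(t)}$) is strictly increasing by Proposition~\ref{t.h.b} and bounded above by $\val$ because $x(t)$ is optimal for the relaxation $\hpie{i}{e(t)}$ of $\hp$; since every limit point of $x(t)$ lies in $\opt$, the increasing and bounded quantity $c^* x(t)$ must converge to $\val$.

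Assume, for contradiction, that case~(B) holds, so $\|x(t)\| \to \infty$ as $t \to T$ (liminf $= \infty$ forces the limit) and $\ppie{i-1}{e(t)}(x(t)) \leq \alpha \|x(t)\|^{n-i+1}$ for some $\alpha < 0$. Because $e(t)$ remains in a bounded subset of $\Lambdapp$ by Proposition~\ref{t.h.b}, and each $\ppie{k}{e}(x)$ is a polynomial of degree $n-k$ in $x$ with coefficients depending continuously on $e$, there exist constants $K_k$ with $|\ppie{k}{e(t)}(x(t))| \leq K_k\|x(t)\|^{n-k}$ uniformly in $t \in [0,T)$. Combining $-\ppie{i-1} \geq |\alpha|\|x\|^{n-i+1}$ in the numerator with $\ppie{i+1} \leq K_{i+1}\|x\|^{n-i-1}$ in the denominator yields
\[ F(t) := \frac{\bigl(-\ppie{i-1}{e(t)}(x(t))\bigr)^{n-i}}{\bigl(\ppie{i+1}{e(t)}(x(t))\bigr)^{n-i+1}} \;\geq\; \frac{|\alpha|^{n-i}}{K_{i+1}^{n-i+1}}\,\|x(t)\|^{n-i+1}, \]
so $\ln F(t) \to +\infty$ as $t \to T$.

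The main obstacle is to show, to the contrary, that $\ln F(t)$ stays bounded above, derived by integrating the differential inequality of Proposition~\ref{t.h.e}. The constant term $C_3$ is harmless. The term $C_1\ppie{i-1}/\ppie{i+1}$ is at most $-(C_1|\alpha|/K_{i+1})\|x(t)\|^2$, quadratically negative in $\|x(t)\|$. The term $C_2\ppie{i-2}/\ppie{i-1}$ (vanishing when $i=1$ by the convention $\ppie{-1} \equiv 0$) has absolute value at most $(C_2 K_{i-2}/|\alpha|)\|x(t)\|$, only linear in $\|x(t)\|$. Because $\|x(t)\| \to \infty$, the quadratic negative term eventually absorbs the linear one: there exists $t_0 < T$ such that $C_1\ppie{i-1}/\ppie{i+1} + C_2\ppie{i-2}/\ppie{i-1} \leq 0$ on $[t_0,T)$, whence $\tfrac{d}{dt}\ln F(t) \leq C_3$ there. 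On the compact interval $[0,t_0]$, every denominator is bounded away from $0$ (using $x(t) \in \Lambdaiepp{i+1}{e(t)}$ and $\ppie{i-1}{e(t)}(x(t)) < 0$ via (\ref{e.c.d})), so the right-hand side is continuous with finite integral $M$ over $[0,t_0]$. Integrating produces $\ln F(t) \leq \ln F(0) + M + C_3(T - t_0)$ for all $t \in [t_0,T)$, a finite upper bound contradicting $\ln F(t) \to +\infty$. Hence (B) cannot occur, (A) holds, and the theorem follows as sketched in the first paragraph.
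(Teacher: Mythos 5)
Your proposal is correct and follows essentially the same route as the paper: assume case (B) of Proposition~\ref{t.h.d}, use the boundedness of $t\mapsto e(t)$ to get uniform homogeneous bounds on the $\ppie{j}{e(t)}$, feed the bound $\ppie{i-1}{e(t)}(x(t))\leq\alpha\|x(t)\|^{n-i+1}$ into Proposition~\ref{t.h.e} so that the negative quadratic term dominates, deduce that the quantity $F(t)$ stays bounded above, and contradict the lower bound $F(t)\gtrsim\|x(t)\|^{n-i+1}\to\infty$. The only (immaterial) difference is that you absorb just the linear term into the quadratic one and then integrate the residual constant $C_3$ over the finite interval $[t_0,T)$, whereas the paper absorbs everything and concludes the derivative is eventually negative.
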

\begin{proof}  We already know $ t \mapsto c^* \xie{i}{e(t)} $ is strictly increasing. Completing the proof thus amounts to showing case (A) of Proposition~\ref{t.h.d}  holds.  Hence, it suffices to assume that case (B) of Proposition~\ref{t.h.d}  holds, and then show Proposition~\ref{t.h.e}  yields a contradiction.  Thus, assume case (B) holds.

Since the trajectory $ t \mapsto e(t) $ is bounded (by Proposition~\ref{t.h.b}), we can choose a compact set $ K $ containing the entire trajectory (we assume nothing about $ K $ other than it is compact).  For $ j = i-2, i-1, i+1 $, define 
\[  \gamma_j := \max \{ |\ppie{j}{\bar{e} }(x)|: \bar{e}  \in K \textrm{ and } \|x\|=1 \}  \]
  (let $ \gamma_{i-2} :=  0 $ when $ i = 1 $).  Since $ \ppie{j}{\bar{e} } $ is homogeneous of degree $ n-j $, we have
\[  | \ppie{j}{\bar{e} }(x) | \leq \gamma_j \, \|x\|^{n-j} \quad \textrm{for all $ \bar{e}  \in K $ and all $ x $} \; . \]  

Let $ x(t) := \xie{i}{e(t)} $.

Using the assumed bound $ \ppie{i-1}{e(t)}(x(t)) \leq \alpha \| x(t) \|^{n-i+1} $ -- keep in mind especially that $ \alpha $ is negative -- and using the positivity of $ \ppie{i+1}{e(t)}(x(t)) $ (according to (\ref{e.d.b})), from the inequality of Proposition~\ref{t.h.e}  we find for all $ 0 \leq t < T $,
\[ \frac{d}{dt} \,  \ln \frac{\left(  -\ppie{i-1}{e(t)}(x(t)) \right)^{n-i}}{\left(  \ppie{i+1}{e(t)}(x(t)) \right) ^{n-i+1}} \leq C_1 \, \frac{ \alpha }{\gamma_{i+1}} \,  \|x(t)\|^2 + C_2 \, \frac{ \gamma_{i-2}}{| \alpha |} \, \|x(t)\| + C_3 \; .  \]
Hence, there exists a value $ r $ for which
\[ \|x(t)\| > r \quad \Rightarrow \quad  \frac{d}{dt} \,  \ln \frac{\left(  -\ppie{i-1}{e(t)}(x(t)) \right)^{n-i}}{\left(  \ppie{i+1}{e(t)}(x(t)) \right) ^{n-i+1}} < 0  \; .  \]
Since $ \liminf_{t \rightarrow T} \|x(t)\| = \infty $ (by assumption), it follows that
\[ \sup_{0 \leq t < T} \frac{\left(  -\ppie{i-1}{e(t)}(x(t)) \right)^{n-i}}{\left(  \ppie{i+1}{e(t)}(x(t)) \right) ^{n-i+1}} < \infty \; . \]
However,
\begin{align*}
    \frac{\left(  -\ppie{i-1}{e(t)}(x(t)) \right)^{n-i}}{\left(  \ppie{i+1}{e(t)}(x(t)) \right) ^{n-i+1}} & \geq \frac{\left(  | \alpha | \, \|x(t)\|^{n-i+1} \right)^{n-i}}{\left(  \gamma_{i+1} \|x(t) \|^{n-i-1} \right) ^{n-i+1}}\\
& \qquad   =   \frac{| \alpha |^{n-i}}{\gamma_{i+1}^{n-i-1}} \,  \|x(t)\|^{n-i+1} \rightarrow  \infty \; , 
\end{align*} 
a contradiction, thus concluding the proof of the theorem (except for proving the two propositions).  \end{proof}

    \subsubsection{{\bf Proving the first of the two propositions}}  Now we begin proving Proposition~\ref{t.h.d}.  The proof relies on three lemmas.

\begin{lemma}  \label{t.h.g}
Assume $ \opt $ is a bounded set and $ 1 \leq i \leq n - 2 $.  Let $ \{ e_j \} $ be a sequence of derivative directions converging to  $ e \in \Lambdapp $, and assume $ x_j \in \optie{i}{e_j} $.  
\begin{enumerate}
\item   If $ x_j \rightarrow x $, then $ x \in \optie{i}{e} $. 
\item   If $ \{ x_j \} $ is an unbounded set, then $  \optie{i}{e} = \emptyset $ and $ \liminf \| x_j \| = \infty $. 
\item   If $ \{ x_j \} $ is an unbounded set and $ \limsup c^* x_j  > - \infty  $, then   $  x_j/\|x_j\|   $ has exactly one limit point $ d $; moreover, $ d $ satisfies 
$  \ppie{i-1}{e}(d) < 0 $.  
\end{enumerate} 
\end{lemma}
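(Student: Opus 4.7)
For part (A), I would first establish feasibility of the limit by passing to the limit in the polynomial characterization~(\ref{e.c.a}): the inequalities $\ppie{j}{e_j}(x_j) \geq 0$ for $j=i,\ldots,n-1$ together with $A x_j = b$ are preserved under the joint continuity of $(e,x) \mapsto \ppie{j}{e}(x)$, giving $x \in \feasie{i}{e}$. For optimality I would argue by density. Since $A e = \lim A e_j = b$ and $e \in \Lambdapp \subseteq \Lambdaiepp{i}{e}$, the point $e$ lies in $\relint(\feasie{i}{e})$. For any $y \in \feasie{i}{e}$ and any $\epsilon \in (0,1]$, the convex combination $y_\epsilon := (1-\epsilon) y + \epsilon e$ satisfies $\ppie{j}{e}(y_\epsilon) > 0$ for $j = i, \ldots, n-1$; by continuity $y_\epsilon \in \feasie{i}{e_j}$ for all large $j$, so $c^* x_j \leq c^* y_\epsilon$. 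Letting first $j \to \infty$ and then $\epsilon \to 0$ yields $c^* x \leq c^* y$.

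For part (B), assume $\{x_j\}$ is unbounded; after passing to a subsequence, $\|x_j\| \to \infty$ and $x_j/\|x_j\| \to d$ for some unit vector $d$. Dividing $A x_j = b$ and $\ppie{j}{e_j}(x_j) \geq 0$ by the appropriate powers of $\|x_j\|$ and letting $j \to \infty$ gives $A d = 0$ and $d \in \Lambdaiep{i}{e}$, so $d$ is a nonzero recession direction of $\feasie{i}{e}$; moreover $c^* x_j \leq c^* e_j \to c^* e$ forces $c^* d \leq 0$. If $\optie{i}{e}$ were nonempty, fix $x^* \in \optie{i}{e}$: the ray $\{x^* + t d : t \geq 0\}$ is feasible, so optimality of $x^*$ forces $c^* d \geq 0$, hence $c^* d = 0$ and the entire ray lies in $\optie{i}{e}$. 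By~(\ref{e.a.b}) this forces $\optie{i}{e} = \opt$, but $\opt$ is nonempty (by Proposition~\ref{t.d.c}, since $\swath(i) \neq \emptyset$) and bounded by hypothesis, giving a contradiction. Thus $\optie{i}{e} = \emptyset$, and then $\liminf \|x_j\| = \infty$ follows because any bounded subsequence would converge, by part (A), to a point of $\optie{i}{e}$.

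For part (C), (B) gives $\|x_j\| \to \infty$, and for any limit point $d$ of $x_j/\|x_j\|$ the same homogeneous passage to the limit yields $d \in \partial \Lambdaiep{i}{e}$ (using $\ppie{i}{e_j}(x_j) = 0$, since $x_j \in \partial \Lambdaiep{i}{e_j}$), $d \in \Lambdaiep{i+1}{e}$, and $A d = 0$. The hypothesis $\limsup c^* x_j > -\infty$ together with $c^* x_j \leq c^* e_j \to c^* e$ makes $c^* x_j$ bounded on a subsequence, and dividing by $\|x_j\| \to \infty$ forces $c^* d = 0$ along it. By Theorem~\hyperlink{targ_thm_one}{1}(B), the boundary face of $\Lambdaiep{i}{e}$ containing $d$ is either a face of $\Lambdap$ or a single ray contained in $\Lambdaiepp{i+1}{e} \setminus \Lambdaiep{i-1}{e}$; the first alternative is ruled out because $d \in \Lambdap$ with $A d = 0$ and $c^* d = 0$ would translate $\opt$ along a nonzero direction, contradicting boundedness of $\opt$. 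Hence $\ppie{i-1}{e}(d) < 0$ by~(\ref{e.c.d}). For uniqueness I would pass to the limit in the first-order conditions $\lambda_j c^* = D \ppie{i}{e_j}(x_j) + y_j^* A$: applying them to $e_j - x_j$ and using homogeneity yields the identity $\lambda_j \, c^*(e_j - x_j) = \ppie{i+1}{e_j}(x_j)$, so $\lambda_j/\|x_j\|^{n-i-1}$ tends to a strictly positive limit, and dividing through gives a limiting relation $D \ppie{i}{e}(d) - \mu c^* \in \range(A^*)$, $\mu > 0$, which is restrictive enough to single out one isolated non-$\Lambdap$ boundary ray.

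The subtle step is the uniqueness assertion in (C); the remainder is routine continuity and convexity. The main obstacle is verifying that the Lagrange multipliers scale exactly as $\|x_j\|^{n-i-1}$ and that the limiting first-order equation, together with $d$ being on an isolated boundary ray of $\Lambdaiep{i}{e}$, pins down a single ray independently of which subsequence was taken -- one must in particular rule out the possibility that $c^* x_j \to -\infty$ along some other subsequence yields a different limit direction.
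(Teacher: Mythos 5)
Your parts (A) and (B) are correct and follow the paper's own argument essentially verbatim: feasibility of the limit via the characterization (\ref{e.c.a}), optimality by perturbing a competitor toward the strictly feasible point $e$ so that it becomes feasible for $\hpie{i}{e_j}$ for large $j$, and in (B) the observation that a recession direction with $c^*d=0$ would make $\optie{i}{e}$ unbounded and hence equal to $\opt$, contradicting boundedness.

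Part (C), however, has a genuine gap -- the one you yourself flag. You establish $c^*d = 0$ only for limit points arising from subsequences along which $c^*x_j$ stays bounded, and your uniqueness argument (passing to the limit in the first-order conditions) is not carried out and is not obviously salvageable: the scaling of $\lambda_j = \ppie{i+1}{e_j}(x_j)/c^*(e_j-x_j)$ depends on whether $c^*x_j$ converges or diverges to $-\infty$ along the subsequence in question (in the divergent case the normalized limit of the stationarity relation loses its $c^*$ term entirely), and even when the limiting relation $D\ppie{i}{e}(d) - \mu c^* \in \range(A^*)$ is obtained, it is only a necessary condition and nothing you have said shows it is satisfied on a unique boundary ray. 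The paper closes both holes at once with a convex-geometric argument. First, \emph{every} feasible direction $d$ of $\hpie{i}{e}$ satisfies $c^*d \geq 0$: otherwise the objective of $\hpie{i}{e}$ is unbounded below, so some $\bar{x} \in \relint(\feasie{i}{e})$ has $c^*\bar{x} < \limsup c^*x_j$; strict feasibility makes $\bar{x}$ feasible for $\hpie{i}{e_j}$ for all large $j$, contradicting optimality of $x_j$ infinitely often. Since every limit point of $x_j/\|x_j\|$ is a feasible direction with $c^*d \leq 0$, all limit points lie in $D := \{ d \in \Lambdaiep{i}{e} : Ad = 0,\ c^*d = 0\}$. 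Next, $D \cap \Lambdaiepp{i}{e} = \emptyset$ (a $d$ in the open cone would be a feasible direction for $\hpie{i}{e_j}$ for large $j$, making $\optie{i}{e_j}$ unbounded, hence equal to the bounded set $\opt$), so $D$ is a convex cone contained in a boundary face of $\Lambdaiep{i}{e}$; by Corollary~\ref{t.d.b}(B) it is either a single ray or a subset of $\Lambdap$, and the latter is excluded by boundedness of $\opt$. Thus $D$ is a single ray, which gives uniqueness for the full sequence; the inequality $\ppie{i-1}{e}(d) < 0$ then follows as you indicate. You would need to replace your first-order-condition sketch with an argument of this kind for part (C) to stand.
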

\begin{proof}  First assume $ x_j \rightarrow x $.  Then $ \ppi{k}_e(x) = \lim \ppi{k}_{e_j}(x_j) $ for all $ k $.  However, by (\ref{e.c.a}), $ \ppi{k}_{e_j}(x_j) \geq 0 $ for all $ k = i, \ldots, n-1 $.  Thus, $ \ppi{k}_e(x) \geq 0 $ for all $ k = i, \ldots, n-1 $.  Hence, again invoking (\ref{e.c.a}), $ x \in \Lambdaiep{i}{e} $.  Since, trivially, $ Ax = b $, we thus see that $ x \in \feasie{i}{e} $.

To prove that $ x $ not only is feasible but is optimal, we assume otherwise and obtain a contradiction.  Thus, assume $ \bar{x} \in \feasie{i}{e} $ satisfies $ c^* \bar{x}  < c^* x  $.  By nudging $ \bar{x} $ towards the strictly feasible point $ e $, we may assume $ \bar{x} $ is strictly feasible in addition to satisfying $ c^* \bar{x}  < c^* x  $.  However, by (\ref{e.c.b}), strict feasibility of $ \bar{x} $ implies $ \ppi{k}_e( \bar{x}) > 0 $ for all $ k = i, \ldots, n-1 $.  Since $ \ppi{k}_{e_j}( \bar{x}) \rightarrow  \ppi{k}_{e}( \bar{x})  $, we thus have, again using (\ref{e.c.a}), that $ \bar{x} \in \feasie{i}{e_j} $ for $ j > J $ (some $ J $).  But then 
\[ c^*x > c^* \bar{x}   \geq  \lim c^* x_j = c^*x \; , \]
a contradiction.  Hence, $ x $ is optimal, and assertion (A) of the lemma is established.

Now assume $ \{ x_j \} $ is unbounded.  Choose a subsequence $ \{ x_{j_{ \ell} } \} $ for which \newline $ \liminf_{ \ell \rightarrow \infty } \|x_{j_{ \ell} } \|=\infty$ and let $ d $ be a limit point of $ \{ x_{j_{ \ell} }/ \| x_{j_{ \ell} } \| \} $.  Then $ d $ is a feasible direction for $ \hpie{i}{e} $, that is, satisfies $ Ad = 0 $ and $ d \in \Lambdaiep{i}{e} $. (That $ Ad = 0 $ is trivial.  That $ d \in \Lambdaiep{i}{e} $ follows from $ \ppie{k}{e}(d) = \lim_{ \ell  \rightarrow \infty} \ppie{k}{e_{j_{\ell}}}(x_{j_{ \ell }}/ \| x_{j_{ \ell }} \| ) \geq 0 $ for $ k = i, \ldots n-1 $, the inequality being due to $ x_{j_{ \ell}} \in \Lambdaiep{i}{e_{j_{ \ell}}} $ along with homogeneity of $ \ppie{i}{e_{j_{\ell}}} $.)    The feasible direction $ d $ satisfies $ c^* d \leq 0 $ (because $ e \in \feasie{i}{e_{j_{ \ell}} } $ -- and hence $ \limsup_{ \ell \rightarrow \infty}  c^* x_{j_{ \ell}} < \infty $   -- and because  $ \liminf_{ \ell \rightarrow \infty}  \| x_{j_{ \ell}} \| = \infty $). 
 
Clearly, now, if $ \optie{i}{e} \neq \emptyset $ then $ \optie{i}{e} $ is unbounded (in direction $ d $), implying by Corollary~\ref{t.d.b}(B) that $ \optie{i}{e} = \opt $.  But this would contradict our assumption that $ \opt $ is bounded.  Thus, $ \optie{i}{e} = \emptyset $.

To conclude the proof of assertion (B) of the lemma, it remains to show $ \liminf \| x_j \| = \infty $.  But if this identity did not hold then $ \{ x_j \} $ would have a limit point, and thus, by assertion (A) of the lemma, we would have $ \optie{i}{e} \neq \emptyset $, contradicting what we just proved.  The proof of assertion (B) of the lemma is now complete.

Now we prove assertion (C).  Assume $ \{ x_j \} $ is unbounded and $ \limsup  c^* x_j  > - \infty $.  

With $ \{ x_j \} $ unbounded, we already know from above that every limit point $ d $ of $ \{ x_j/\|x_j\| \} $ is a feasible direction for $ \hpie{i}{e} $ (that is, satisfies $ Ad = 0 $ and $ d \in \Lambdaiep{i}{e} $) and $ c^* d \leq 0 $.

We claim, however, that every feasible direction $ d $ for $ \hpie{i}{e} $ satisfies $ c^* d  \geq 0 $.  Indeed, otherwise the optimal objective value of $ \hpie{i}{e} $ would be unbounded and hence there would exist $ \bar{x} \in \relint(\feasie{i}{e}) $ satisfying $ c^* \bar{x}  < \limsup c^* x_j  $ (using the assumption $ \limsup  c^* x_j  > - \infty $).  But $ \bar{x} \in \feasie{i}{e_j} $ for sufficiently large $ j $ (because $ 0 < \ppie{k}{e}(\bar{x}) = \lim \ppie{k}{e_j}( \bar{x}) $ for $ k = i, \ldots, n-1 $), contradicting that  $ x_j $ is optimal for $ \hpie{i}{e_j} $ for all $ j $.

Clearly, from the two preceding paragraphs, every limit point of $ \{ x_j/\|x_j\| \} $ is contained in  $ D := \{ d \in \Lambdaiep{i}{e}: Ad = 0 \textrm{ and } c^* d  = 0 \} $.  To show  $ \{ x_j/\|x_j\| \} $ has a unique limit point it thus suffices to show  $ D $ consists of a single ray.

We claim $ D \cap \Lambdaiepp{i}{e} = \emptyset $.  Otherwise there would exist $ d \in D $ satisfying $ \ppie{k}{e}(d) > 0 $ for $ k = i, \ldots, n-1 $, which from continuity would imply $ d $ to be a feasible direction for $ \hpie{i}{e_j} $ for all $ j > J $ (some $ J $).  But then $ \optie{i}{e_j} $ would be an unbounded set for $ j > J $, implying by Corollary~\ref{t.d.b}(B) that $ \optie{i}{e_j} = \opt $, and thus contradicting the assumed boundedness of $ \opt $.  Hence, it is indeed the case that $ D \cap \Lambdaiepp{i}{e} = \emptyset $.

It is clear now that $ D = \{ d \in \partial \Lambdaiep{i}{e}: Ad = 0 \textrm{ and } c^* d  = 0 \} $. Thus, $ D $  is contained in a boundary face of $ \Lambdaiep{i}{e} $.  Hence, $ D $ consists of a single ray or is a subset of $ \Lambdap $, by Corollary~\ref{t.d.b}(B).  But if $ D $ were a subset of $ \Lambdap $ then each $ d \in  D $ would be a feasible direction for $ \hp $, which, with $ c^* d  = 0 $, would imply $ \opt $ to be unbounded, a contradiction to the assumed boundedness of $ \opt $.  Thus, $ D $ consists of a single ray, and so $ \{ x_j/\|x_j\| \} $ has a unique limit point $ d $ (and $ d $ satisfies $ d \in \partial \Lambdaiep{i}{e} $, $ Ad = 0 $, $ c^*d = 0 $).  

Lastly, we prove $ \ppie{i-1}{e}(d) < 0 $.  

As $ x_j \in \partial \Lambdaiep{i}{e_j} $, we have $ \ppie{i-1}{e_j}(x_j) \leq 0 $ (by (\ref{e.c.e})).  Thus, by homogeneity and continuity, $ \ppie{i-1}{e}(d) \leq  0 $.  However, since $ d \in \partial \Lambdaiep{i}{e} $, if $ d $ satisfied $ \ppie{i-1}{e}(d) = 0 $, then we would have $ d \in \Lambdaiep{i-1}{e} $, and hence $ d \in \Lambdap $ (by Theorem~\ref{t.d.a}(A)) -- so $ d $ would be a feasible direction for $ \hp $, and hence $ \opt $ would be unbounded (using $ c^* d = 0 $), a contradiction.  Thus, $ \ppie{i-1}{e}(d) < 0 $, completing the proof of the lemma.  \end{proof}

\begin{lemma}  \label{t.h.h}
Assume $ \opt $ is a bounded set, $ e \in \Lambdapp $ and $ 1 \leq i \leq n-2 $.  Assume there exist sequences $  e_j  \in  \swath(i) $ and $ x_j \in \optie{i}{e_j} $ for which $ e_j \rightarrow e $, $ \{ x_j \} $ is unbounded and $ \limsup c^*x_j > -\infty $. 
\begin{enumerate}

\item  There exists a constant $ \alpha < 0 $ and an open neighborhood $ U $ of $ e $ such that for all $ \bar{e} \in U \cap \swath(i)  $ and $ \bar{x} \in \optie{i}{\bar{e}} $, 
\[  \ppie{i-1}{\bar{e}}(\bar{x})  < \alpha \| \bar{x} \|^{n-i+1}   . \]

\item   For each $ \ell \in \mathbb{R}  $, there exists an open neighborhood $ V( \ell ) $ of $ e $  such that for all $ \bar{e} \in V( \ell ) \cap \swath(i) $ and $ \bar{x} \in \optie{i}{ \bar{e}} $, it holds that $ \| \bar{x} \| \geq  \ell  $.
\end{enumerate}      
\end{lemma}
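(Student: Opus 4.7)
The plan is to handle (B) first by a short compactness argument, then to derive (A) by contradiction: if (A) failed, optimality of $\bar{x}_k$ on a sequence $\bar{e}_k \to e$ would produce a unit vector $d'$ that is a nonzero recession direction of $\feas$ with $c^*d' \leq 0$, incompatible with the fact that $\opt$ is nonempty and bounded.

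For (B), I would negate the conclusion at some $\ell$, obtain $\bar{e}_k \to e$ in $\swath(i)$ and $\bar{x}_k \in \optie{i}{\bar{e}_k}$ with $\|\bar{x}_k\| < \ell$, and pass to a convergent subsequence $\bar{x}_k \to \bar{x}$. Lemma~\ref{t.h.g}(A) would place $\bar{x}$ in $\optie{i}{e}$, while the hypothesized unbounded sequence $\{x_j\}$ forces $\optie{i}{e} = \emptyset$ by Lemma~\ref{t.h.g}(B), a contradiction.

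For (A), I would negate, take $\alpha_k := -1/k$ with shrinking neighborhoods of $e$, and obtain $\bar{e}_k \to e$ in $\swath(i)$ together with $\bar{x}_k \in \optie{i}{\bar{e}_k}$ satisfying
\[
\ppie{i-1}{\bar{e}_k}(\bar{x}_k) \ \geq \ -\tfrac{1}{k}\,\|\bar{x}_k\|^{n-i+1}.
\]
Part (B), already proven, forces $\|\bar{x}_k\| \to \infty$, so after passing to a subsequence $\bar{x}_k/\|\bar{x}_k\| \to d'$, a unit vector. Homogeneity and passage to the limit give $Ad' = 0$ and $d' \in \Lambdaiep{i}{e}$; boundary optimality $\ppie{i}{\bar{e}_k}(\bar{x}_k) = 0$ (which holds by (\ref{e.d.b}), noting that boundedness of $\opt$ rules out $\bar{e}_k \in \core(i)$) gives $\ppie{i}{e}(d') = 0$; the displayed sign inequality yields $\ppie{i-1}{e}(d') \geq 0$; and (\ref{e.c.e}) applied at $\bar{x}_k \in \partial \Lambdaiep{i}{\bar{e}_k}$ gives the matching bound $\ppie{i-1}{e}(d') \leq 0$. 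Hence $d'$ lies in $\Lambdaiep{i-1}{e} \cap \partial \Lambdaiep{i}{e}$, which equals $\Lambdap \cap \partial \Lambdaiep{i}{e}$ by Theorem~\ref{t.d.a}(A); thus $d'$ is a nonzero recession direction of $\feas$.

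Dividing $c^*\bar{x}_k \leq c^*\bar{e}_k$ by $\|\bar{x}_k\|$ then yields $c^*d' \leq 0$. The hypothesis $e_j \in \swath(i)$ combined with the nesting of swaths in Proposition~\ref{t.d.c} forces $\swath(0) \neq \emptyset$ and hence $\opt \neq \emptyset$, while $\opt$ is bounded by assumption. Consequently neither $c^*d' < 0$ (which would make $\hp$ unbounded below, hence $\opt = \emptyset$) nor $c^*d' = 0$ (which would make $\opt$ unbounded along the feasible direction $d'$) is possible, producing the contradiction. The main obstacle is precisely the two-sided sandwich yielding $\ppie{i-1}{e}(d') = 0$: the lower bound from the failure of (A) must match the upper bound coming from boundary optimality exactly, since this is what triggers the face-structure identity of Theorem~\ref{t.d.a}(A) and drops $d'$ into the base cone $\Lambdap$.
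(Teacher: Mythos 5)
Your proof is correct, and for part (A) it takes a genuinely different route from the paper's. The paper proves (A) directly: it invokes Lemma~\ref{t.h.g}(C) on the hypothesized sequence to obtain a \emph{unique} limit direction $d$ of $x_j/\|x_j\|$ with $\ppie{i-1}{e}(d)<0$, uses an ``intermingling'' trick (interleaving the hypothesized sequence with an arbitrary competing one) to show that \emph{every} sequence $\bar e_j\to e$, $\bar x_j\in\optie{i}{\bar e_j}$ has the same normalized limit $d$, and then sets $\alpha=\smfrac{1}{2}\ppie{i-1}{e}(d)$, finishing by continuity and homogeneity of $(\bar e,x)\mapsto\ppie{i-1}{\bar e}(x)$. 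You instead argue by contradiction and need only \emph{some} limit point $d'$ of the normalized sequence produced by negating (A); your two-sided sandwich $\ppie{i-1}{e}(d')=0$ forces $d'\in\Lambdap$ via Theorem~\ref{t.d.a}(A), and $c^*d'\le 0$ then contradicts $\opt$ being nonempty and bounded. This is in essence the final paragraph of the paper's proof of Lemma~\ref{t.h.g}(C) transplanted to your sequence, so you bypass both the uniqueness-of-limit-direction machinery and the intermingling argument --- a modest simplification --- at the cost of not identifying an explicit $\alpha$ (the paper's argument pins it down as half of $\ppie{i-1}{e}(d)$). One small imprecision: boundedness of $\opt$ rules out $\bar e_k\in\core(i)$ only for all sufficiently large $k$, since $\optie{i}{\bar e_k}=\opt$ would force $\|\bar x_k\|$ to stay bounded while you have $\|\bar x_k\|\to\infty$; but large $k$ is all you need. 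Part (B) is handled exactly as in the paper, via parts (A) and (B) of Lemma~\ref{t.h.g}.
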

\begin{proof}  Parts (A) and (B) of Lemma~\ref{t.h.g}  together imply that if for one sequence $ \{ e_j \} \subset \swath(i) $  there exist $ x_j \in \optie{i}{e_j} $ for which $ \{ x_j \} $  is unbounded -- where $ e_j \rightarrow e $ -- then for every  sequence $ \{ e_j \} \subset \swath(i) $ converging to $ e $, and every choice of $ x_j \in \optie{i}{e_j} $, we have $ \liminf   \|x_j\| = \infty $. Part (B) of the present lemma easily follows.

Towards proving part (A), let $ e_j $ and $ x_j $ be sequences as in the statement of the present lemma.  Part (C) of Lemma~\ref{t.h.g}  then shows $ x_j/\|x_j\| $ has a unique limit point $ d $.  

We claim $ \bar{x}_j/\|\bar{x}_j\| \rightarrow d $ for any sequences $ \bar{e}_j $ and $ \bar{x}_j \in \optie{i}{\bar{e}_j} $ satisfying $ \bar{e}_j \rightarrow e $.  Indeed, the intermingled sequences $ e_1, \bar{e}_1, e_2, \bar{e}_2, \ldots  $ and  $ x_1, \bar{x}_1, x_2, \bar{x}_2, \ldots  $ clearly satisfy the hypotheses of part (C) of Lemma~\ref{t.h.g}, and hence the normalized sequence
\[  x_1/\|x_1\|, \bar{x}_1/\| \bar{x}_1\|,  x_2/\|x_2\|, \bar{x}_2/ \| \bar{x}_2\|, \ldots  \]
  has a unique limit point, which, of course, is $ d $, thus establishing the claim.

Choosing, say, $ \alpha = \smfrac{1}{2} \, \ppie{i-1}{e}(d) $, part (A) of the present lemma is now a consequence of  the continuity of the polynomial $ (\bar{e}, \bar{x}) \mapsto \ppie{i-1}{\bar{e}}(\bar{x} ) $,  and the fact that $  x \mapsto \ppie{i-1}{ \bar{e}}(x)  $  is homogeneous of degree $ n-i+1 $  for all $ \bar{e} $.   \end{proof}

Throughout the remainder of the section,
\begin{itemize}
\item assume $ 1 \leq i \leq n- 2 $, 
\item  assume $ t \mapsto e(t) $ ($ 0 \leq t < T $) is a maximal trajectory defined by the differential equation $ \dot{e}(t) = \xie{i}{e(t)} - e(t) $, starting at $ e(0) \in \swath(i) \setminus \core(i) $,
\item assume $ T < \infty $,  and 
\item  to ease notation, let $ x(t) := \xie{i}{e(t)} $.
\end{itemize}

The third (and final) lemma for proving Proposition~\ref{t.h.d}  shows that when the trajectory $ t \mapsto e(t) $ terminates at finite time $ T $, the termination is not due to having reached the boundary of $ \feas $. 

\begin{lemma}  \label{t.h.i}
 \quad  $ \{ e(t): 0 \leq t < T \} \subset \Lambdapp $ 
\end{lemma}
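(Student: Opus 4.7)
The plan is to observe that Lemma~\ref{t.h.i} is essentially built into the construction of the maximal trajectory, so the proof amounts to unpacking definitions. By Theorem~\ref{t.e.a}, the vector field $e \mapsto \xie{i}{e} - e$ is analytic on the relatively open set $\swath(i) \setminus \core(i)$, and standard ODE theory guarantees that the initial value problem $\dot{e}(t) = \xie{i}{e(t)} - e(t)$, $e(0) \in \swath(i) \setminus \core(i)$, has a unique solution defined on a maximal interval $[0, T)$ with $e(t) \in \swath(i) \setminus \core(i)$ for every $t \in [0, T)$. By the very definition of $\swath(i)$, this set is contained in $\Lambdapp$, whence $e(t) \in \Lambdapp$ for all $t \in [0, T)$.

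For clarity I would also record the argument in contrapositive form. Suppose, toward a contradiction, that $t^{*} := \inf\{ t \in [0, T) : e(t) \notin \Lambdapp \}$ is finite. Since $\Lambdapp$ is open and $e(0) \in \Lambdapp$, continuity of $e(\cdot)$ forces $e(t^{*}) \in \partial \Lambdap$, so in particular $e(t^{*}) \notin \relint(\feas)$. However, for every $t < T$ the defining property of the maximal trajectory gives $e(t) \in \swath(i) \setminus \core(i) \subseteq \relint(\feas)$; applied to $t = t^{*} < T$ this contradicts $e(t^{*}) \in \partial \Lambdap$.

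The hard part: there really isn't one. The content of the lemma is purely a bookkeeping observation — the maximal trajectory, being an integral curve of a vector field whose domain lies inside $\Lambdapp$, cannot itself reach $\partial \Lambdap$ at any $t < T$. What is substantive is the lemma's role in the subsequent analysis: together with boundedness of $t \mapsto e(t)$ (Proposition~\ref{t.h.b}), it guarantees that when $T < \infty$ the termination of the trajectory is not attributable to $e(t)$ approaching $\partial \Lambdap$; the only remaining possibilities are that $e(t)$ converges into $\core(i)$ or that the associated path $t \mapsto \xie{i}{e(t)}$ escapes to infinity, which is precisely the dichotomy (A)/(B) of Proposition~\ref{t.h.d}.
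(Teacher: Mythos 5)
Your argument establishes only the literal statement, which — as you yourself observe — is a tautology: by the definition of a maximal trajectory, $ e(t) \in \swath(i) \setminus \core(i) \subseteq \Lambdapp $ for every $ t < T $. But that is not the content the lemma is required to carry, and your closing claim — that the lemma ``guarantees that when $ T < \infty $ the termination of the trajectory is not attributable to $ e(t) $ approaching $ \partial \Lambdap $'' — does not follow from what you proved. A curve confined to an open set for all $ t < T $ can perfectly well have all of its limit points on the boundary of that set as $ t \rightarrow T $. The paper invokes the lemma in exactly that stronger form: in the proof of Proposition~\ref{t.h.d} it writes ``By Lemma~\ref{t.h.i}, $ {\mathcal L} \subset \Lambdapp $,'' where $ {\mathcal L} $ is the set of limit points of $ t \mapsto e(t) $ as $ t \rightarrow T $; this is what licenses the dichotomy that each limit point lies either in $ \core(i) $ or in $ \Lambdapp \setminus \swath(i) $. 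Your argument gives no control whatsoever over $ {\mathcal L} $.

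The substantive content, and what the paper actually proves, is a uniform quantitative bound: $ p(e(t)) \geq p(e(0)) \, e^{-nt} $, so that for $ T < \infty $ one gets $ p(e(t)) \geq p(e(0)) \, e^{-nT} > 0 $, whence every limit point $ \bar{e} $ satisfies $ p(\bar{e}) > 0 $ and therefore lies in $ \Lambdapp $ rather than on $ \partial \Lambdap $. The bound follows from $ \smfrac{d}{dt} p(e(t)) = Dp(e(t))[\xie{i}{e(t)} - e(t)] \geq -n \, p(e(t)) $, which by Euler's identity $ Dp(e)[e] = n \, p(e) $ reduces to showing $ Dp(e(t))[\xie{i}{e(t)}] \geq 0 $; this in turn holds because $ Dp(\bar{e}) $ is a positive multiple of the linear functional $ x \mapsto \ppie{n-1}{\bar{e}}(x) $ (by (\ref{e.c.g})) and $ \xie{i}{e(t)} \in \Lambdaiep{i}{e(t)} \subseteq \Lambdaiep{n-1}{e(t)} $. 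None of this appears in your proposal, so the step on which the subsequent analysis of the case $ T < \infty $ depends is missing.
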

\begin{proof}  It suffices to show $ p(e(t)) \geq p(e(0)) e^{-nt} $ for all $ t $, and hence suffices to show $ \smfrac{d}{dt} p(e(t)) \geq -n p(e(t)) $, that is, suffices to show $ Dp(e(t))[ \dot{e}(t)] \geq -n p(e(t)) $.  Since $ \dot{e}(t) = x(t) - e(t) $  and since $ Dp(e(t))[e(t)] = n p(e(t)) $ (by (\ref{e.c.f})), it suffices to show $ Dp(e(t))[x(t)] \geq 0 $. 

However, by (\ref{e.c.g}), for all $ \bar{e}   $, $ Dp(\bar{e}  ) $ is a positive multiple of $ D \ppie{n-1}{\bar{e}  }(\bar{e}  ) $.  Since for all $ x $, $ D\ppie{n-1}{\bar{e}  }(\bar{e}  )[x] = \ppie{n-1}{\bar{e}  }(x) $ (because $ x \mapsto \ppie{n-1}{\bar{e}  }(x) $ is linear), we have $ Dp(\bar{e}  )[x] \geq 0 $ if and only if $ x $ is in the cone $  \Lambdaiep{n-1}{\bar{e}  } $.  But $ x(t) \in  \Lambdaiep{i}{e(t)} \subseteq \Lambdaiep{n-1}{e(t)} $, thus concluding the proof.  
\end{proof}  

\noindent 
{\bf {\em Proof of Proposition~\ref{t.h.d}.}}  To rely on Lemmas \ref{t.h.g}  and \ref{t.h.h}, we need $ \opt $ to be a bounded set.  Boundedness of $ \opt $, however, was established in Proposition~\ref{t.h.b}  as a simple consequence of $ \swath(i) \setminus \core(i) $ being nonempty.

Let $ {\mathcal L} $ be the set of limit points for the trajectory $ t \mapsto e(t) $ as $ t \rightarrow T $.  By Lemma~\ref{t.h.i}, $ {\mathcal L} \subset \Lambdapp $.  Thus,  if $ e  \in  {\mathcal L} $, then either  $ e  \in \core(i) $ (that is, $ \optie{i}{ e } = \opt $) or $ e  \in \Lambdapp \setminus  \swath(i) $ (in which case $ \optie{i}{e } = \emptyset $).

Assume first that $ {\mathcal L} $ consists of a single point $ e $.  

Then, by Lemma~\ref{t.h.g}(A), each limit point of the path $ t \mapsto x(t) $ lies in $ \optie{i}{e} $.  If the path is bounded, the set of limit points is nonempty, hence $ \optie{i}{e} \neq \emptyset $, and thus, from above, $ e \in \core(i) $.  Clearly, then, if the path $ t \mapsto x(t) $ is bounded, case (A) of the proposition holds.

On the other hand, if the path $ t \mapsto x(t) $ is unbounded, then  Lemma~\ref{t.h.h}  implies $ \liminf_{t \rightarrow T} \|x(t) \| = \infty $,  and implies there exists $ 0 \leq \bar{t} < T $ and $ \alpha < 0 $ satisfying
\begin{equation}  \label{e.h.d}
  \ppie{i-1}{e(t)}(x(t)) \leq \alpha \|x(t)\|^{n-i+1} \quad \textrm{for all $ \bar{t} \leq t < T $}  \end{equation} 
(here we make use of the fact that $ t \mapsto c^* x(t) $ is strictly increasing (Proposition~\ref{t.h.b}), so that the hypothesis $ \limsup c^* x_j > -\infty  $ of the lemma is clearly fulfilled). 
Hence, from the compactness of the closed interval $ [0, \bar{t}] $ and the fact that for all $ t $, both $ x(t) \neq 0 $ (because $ b \neq 0 $) and $ \ppie{i-1}{e(t)}(x(t)) < 0 $ (by (\ref{e.c.d})), a perhaps larger (but still negative) value of $ \alpha $ satisfies
\begin{equation}  \label{e.h.e}
  \ppie{i-1}{e(t)}(x(t)) \leq \alpha \|x(t)\|^{n-i+1} \quad \textrm{for all $ 0 \leq t < T $} \; .  \end{equation} 
Hence, in all, if the path $ t \mapsto x(t) $ is unbounded, then case (B) of the proposition holds, concluding consideration of the case that $ {\mathcal L} $ consists of a single point.

For the remainder of the proof assume $ {\mathcal L} $ contains more than one point.  We show case (B) of the proposition holds.  We claim that for this it suffices to show  $ {\mathcal L} $ is compact, and that each $ e \in {\mathcal L} $ satisfies the hypotheses of Lemma~\ref{t.h.h}.  Indeed, compactness and Lemma~\ref{t.h.h}(A) imply  $ {\mathcal L} $ can be covered by finitely many open sets $ U_j $ for which there exist $ \alpha_j < 0 $ with the property that for all $ \bar{e} \in U_j \cap \swath(i) $ and $ \bar{x} \in \optie{i}{\bar{e}} $, it holds $  \ppie{i-1}{\bar{e}}( \bar{x} ) < \alpha_j \| \bar{x} \|^{n-i+1} $.  Consequently, letting $ \alpha := \max_j \alpha_j $, there exists $ 0 \leq \bar{t} < T $ for which (\ref{e.h.d})  holds.  Then, as in the preceding paragraph, for a possibly larger (but still negative) value of $ \alpha $, (\ref{e.h.e})  holds.  To conclude proving the claim, it remains only to show $ \liminf_{t \rightarrow T} \|x(t)\| = \infty $.  This, however, is easily accomplished  by covering $ {\mathcal L}   $, for each $ \ell \in \mathbb{R} $, by finitely many open sets $ V( \ell)_j $ as appear in Lemma~\ref{t.h.h}(B).

Thus, to complete the proof of Proposition~\ref{t.h.d}, it remains only to show $ {\mathcal L} $ is compact, and that each $ e \in {\mathcal L} $ satisfies the hypotheses of Lemma~\ref{t.h.h}. 

That $ {\mathcal L} $ is compact is trivial -- indeed, the trajectory $ t \mapsto e(t) $ is bounded, by Proposition~\ref{t.h.b}.

Since $ {\mathcal L} $ consists of more than one point, for each $ e \in {\mathcal L} $ and each open neighborhood $ W $ of $ e $, the Euclidean arc length of $  \{ e(t): 0 \leq t < T \} \cap W $ is infinite.  Thus, since $ T < \infty $,  for each $ e \in {\mathcal L} $ there exists an sequence $ t_1 < t_2 < \ldots $ satisfying $ e(t_j) \rightarrow e $ and $  \|\dot{e}(t_j)\| \rightarrow \infty $.  But $ \dot{e}(t_j) = x(t_j) - e(t_j) $, so $ \|x(t_j) \| \rightarrow \infty $.  Since, additionally, $ \limsup c^* x(t_j) > - \infty $ (because $ t \mapsto c^* x(t) $ is strictly increasing), $ e $ thus clearly satisfies the hypotheses of Lemma~\ref{t.h.h}.  \hfill $ \Box $ 

        \subsubsection{{\bf   Proving the second of the two propositions}}  In proving Proposition~\ref{t.h.e}, continue to
\begin{itemize}
\item assume $ 1 \leq i \leq n- 2 $, 
\item  assume $ t \mapsto e(t) $ ($ 0 \leq t < T $) is a maximal trajectory defined by the differential equation $ \dot{e}(t) = \xie{i}{e(t)} - e(t) $, starting at $ e(0) \in \swath(i) \setminus \core(i) $,
\item assume $ T < \infty $,  and 
\item  to ease notation, let $ x(t) := \xie{i}{e(t)} $.
\end{itemize}
Also, for every $ e $, define $ q_e := \ppie{i}{e}/\ppie{i+1}{e} $.

The proof of Proposition~\ref{t.h.e}  depends on two lemmas.  For understanding the first lemma, recall
the identity given as (\ref{e.h.a}), that is,
\begin{equation}  \label{e.h.f}
  D_e(D^j\ppie{k}{e}(x)) = k D^{j+1} \ppie{k-1}{e}(x) \; . 
\end{equation}

\begin{lemma} \label{t.h.j}
For any $ e $ and  $ x $  satisfying $ \ppie{i}{e}(x) = 0 \neq \ppie{i+1}{e}(x) $, we have
\begin{align*}
   D_e( D q_e(x))[ x-e] = & \, 
  D q_e(x) \\ & \quad + \frac{i(n-i)}{\ppie{i+1}{e}(x)} D \ppie{i-1}{e}(x)  \\ & \qquad - \frac{i(n-i+1) \ppie{i-1}{e}(x)}{(\ppie{i+1}{e}(x))^2} D\ppie{i+1}{e}(x) \; . \end{align*} 
\end{lemma}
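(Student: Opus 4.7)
The plan is to compute $Dq_e(x)$ explicitly with the quotient rule, differentiate with respect to $e$ in direction $u := x-e$, and then simplify using the hypothesis $\ppie{i}{e}(x) = 0$ together with the two standard identities already recorded: identity (\ref{e.h.f}), namely $D_e(D^j\ppie{k}{e}(x)) = k\, D^{j+1}\ppie{k-1}{e}(x)$, and the Euler-type identity (\ref{e.c.f}).

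First I would write, at a point $e$ and $x$ where $\ppie{i+1}{e}(x) \neq 0$,
\[ Dq_e(x) \;=\; \frac{D\ppie{i}{e}(x)}{\ppie{i+1}{e}(x)} \;-\; \frac{\ppie{i}{e}(x)\, D\ppie{i+1}{e}(x)}{(\ppie{i+1}{e}(x))^2}. \]
Differentiating with respect to $e$ in an arbitrary direction $u$, and then setting $\ppie{i}{e}(x) = 0$ so that every term still carrying a bare factor of $\ppie{i}{e}(x)$ drops out, I obtain
\[ D_e(Dq_e(x))[u] \;=\; \frac{D_e(D\ppie{i}{e}(x))[u]}{\ppie{i+1}{e}(x)} \;-\; \frac{(D_e \ppie{i}{e}(x))[u]\, D\ppie{i+1}{e}(x)}{(\ppie{i+1}{e}(x))^2} \;-\; \frac{D\ppie{i}{e}(x)\,(D_e \ppie{i+1}{e}(x))[u]}{(\ppie{i+1}{e}(x))^2}. \]

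Next I apply (\ref{e.h.f}) to each $D_e$ occurrence, giving $(D_e\ppie{i}{e}(x))[u] = i\, D\ppie{i-1}{e}(x)[u]$, $(D_e\ppie{i+1}{e}(x))[u] = (i+1)\, D\ppie{i}{e}(x)[u]$, and $D_e(D\ppie{i}{e}(x))[u] = i\, D^2\ppie{i-1}{e}(x)[u,\cdot]$. Then I specialize to $u = x-e$ using (\ref{e.c.f}) together with the basic fact $D\ppie{j}{e}(x)[e] = \ppie{j+1}{e}(x)$; this yields
\[ D\ppie{i-1}{e}(x)[x-e] = (n-i+1)\ppie{i-1}{e}(x) - \ppie{i}{e}(x) = (n-i+1)\ppie{i-1}{e}(x), \]
\[ D\ppie{i}{e}(x)[x-e] = (n-i)\ppie{i}{e}(x) - \ppie{i+1}{e}(x) = -\ppie{i+1}{e}(x), \]
\[ D^2\ppie{i-1}{e}(x)[x-e,v] = (n-i)\, D\ppie{i-1}{e}(x)[v] - D\ppie{i}{e}(x)[v]. \]

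Finally I substitute these three evaluations into the displayed expression for $D_e(Dq_e(x))[x-e]$ and collect. The term from $D^2\ppie{i-1}{e}(x)[x-e,\cdot]$ splits into $i(n-i)\, D\ppie{i-1}{e}(x)/\ppie{i+1}{e}(x)$ (contributing the first asserted term) and $-i\, D\ppie{i}{e}(x)/\ppie{i+1}{e}(x)$; the middle summand contributes $+(i+1)\, D\ppie{i}{e}(x)/\ppie{i+1}{e}(x)$, and the sum of these two equals $Dq_e(x)$ because $\ppie{i}{e}(x)=0$ forces $Dq_e(x) = D\ppie{i}{e}(x)/\ppie{i+1}{e}(x)$; the last summand is precisely $-i(n-i+1)\ppie{i-1}{e}(x)\, D\ppie{i+1}{e}(x)/(\ppie{i+1}{e}(x))^2$. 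This is the claimed identity. No step is conceptually difficult; the only obstacle is sign and coefficient bookkeeping when combining the three fractions, so I would lay out the substitutions cleanly in a single display and verify that the $D\ppie{i}{e}(x)/\ppie{i+1}{e}(x)$ terms combine with coefficient $-i + (i+1) = 1$ to reproduce $Dq_e(x)$.
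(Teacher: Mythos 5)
Your proposal is correct and follows essentially the same route as the paper's proof: quotient rule for $Dq_e(x)$, differentiation in $e$ via the identity $D_e(D^j\ppie{k}{e}(x)) = k\,D^{j+1}\ppie{k-1}{e}(x)$, substitution of $x-e$, and reduction with (\ref{e.c.f}) and $\ppie{i}{e}(x)=0$. The only (harmless) difference is that you discard the terms carrying a bare factor of $\ppie{i}{e}(x)$ immediately after differentiating, whereas the paper carries them to the end before substituting $\ppie{i}{e}(x)=0$.
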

\begin{proof}  For an arbitrary vector $ \Delta e $ and for any $ x $ at which $ q_e $ is defined (i.e., any $ x $ satisfying $ \ppie{i+1}{e}(x) \neq 0 $), use of (\ref{e.h.f})  gives
 \begin{align*}
 D_e (Dq_e(x))[ \Delta e]  = & \, D_e \left( \smfrac{1}{\ppie{i+1}{e}(x)}  D\ppie{i}{e}(x) - \smfrac{\ppie{i}{e}(x)}{\ppie{i+1}{e}(x)^2}  D \ppie{i+1}{e}(x) \right) [ \Delta e]  \\
    = & \, \smfrac{i}{\ppie{i+1}{e}(x)} D^2 \ppie{i-1}{e}(x)[ \Delta e ] \\
& \quad - \smfrac{i+1}{(\ppie{i+1}{e}(x))^2}  (D\ppie{i}{e}(x)[ \Delta e]) D\ppie{i}{e}(x) \\
& \qquad - \smfrac{i}{(\ppie{i+1}{e}(x))^2} (D\ppie{i-1}{e}(x)[\Delta e]) D\ppie{i+1}{e}(x)\\
& \qquad \quad - \smfrac{2(i+1)\ppie{i}{e}(x)}{(\ppie{i+1}{e}(x))^3} ( D\ppie{i}{e}(x)[ \Delta e]) D\ppie{i+1}{e}(x)\\
& \qquad \qquad  - \smfrac{(i+1)\ppie{i}{e}(x)}{(\ppie{i+1}{e}(x))^2}  D^2\ppie{i}{e}(x)[\Delta e ] \; . 
\end{align*}
Substitute $ \Delta e = x - e $, then use obvious identities to get rid of ``$ [e] $'' (e.g., $ D\ppie{i}{e}(x)[e] = \ppie{i+1}{e}(x) $, $ D^2 \ppie{i-1}{e}(x)[ e ] = D \ppie{i}{e}(x) $), and use homogeneity to get rid of ``$ [x] $'' -- specifically, use (\ref{e.c.f}).  Finally, substitute $ \ppie{i}{e}(x) = 0 $,  and $ \frac{1}{\ppie{i+1}{e}(x)} D\ppie{i}{e}(x) = Dq_e(x) $  (because $ \ppie{i}{e}(x) = 0 $), thereby concluding the proof.  \end{proof}

\begin{lemma}  \label{t.h.k}
For all $ 0 \leq t < T $, there exists $ y(t)^* $ satisfying
\begin{gather*} 
   D^2q_{e(t)}(x(t))[ \dot{x}(t)]  -   \frac{c^* \dot{x}(t) }{(c^*(e(t)-x(t)))^2} \,  c^* \qquad \qquad \qquad \qquad \qquad \qquad \qquad \\
\qquad \qquad   = \frac{i \ppie{i-1}{e(t)}(x(t))}{\ppie{i+1}{e(t)}(x(t))} \left( (n-i+1)\frac{ D\ppie{i+1}{e(t)}(x(t))}{\ppie{i+1}{e(t)}(x(t))}  - (n-i)\frac{ D\ppie{i-1}{e(t)}(x(t))}{\ppie{i-1}{e(t)}(x(t))} \right)   + y(t)^* A \; . 
\end{gather*}  
 \end{lemma}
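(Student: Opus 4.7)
The plan is to differentiate, with respect to $t$, the first-order optimality condition satisfied by $x(t) := \xie{i}{e(t)}$, and then to simplify the resulting identity using Lemma~\ref{t.h.j} applied in the direction $\dot{e}(t) = x(t) - e(t)$. Step one is to apply Corollary~\ref{t.g.d} (with $e = e(t)$) to obtain, for each $0 \leq t < T$, a unique linear functional $y(t)^*$ satisfying
\[ \frac{c^*}{c^*(e(t)-x(t))} - Dq_{e(t)}(x(t)) = y(t)^*\, A, \]
where $q_{e} := \ppie{i}{e}/\ppie{i+1}{e}$. This ratio is well-defined at $x(t)$ because (\ref{e.d.b}) places $x(t) \in \relint(\feasie{i+1}{e(t)})$, so $\ppie{i+1}{e(t)}(x(t)) > 0$. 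Uniqueness of $y(t)^*$ follows from surjectivity of $A$, and its differentiability in $t$ follows from analyticity of $e(t)$ and $x(t)$ (Theorem~\ref{t.e.a}).

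Next, I would differentiate both sides. The chain rule together with $c^*\dot{e}(t) = c^*(x(t)-e(t)) = -c^*(e(t)-x(t))$ produces
\[ \frac{d}{dt}\,\frac{c^*}{c^*(e(t)-x(t))} \;=\; \frac{c^*}{c^*(e(t)-x(t))} \;+\; \frac{c^*\dot{x}(t)}{(c^*(e(t)-x(t)))^2}\, c^*. \]
For $\frac{d}{dt}\,Dq_{e(t)}(x(t)) = D_{e}(Dq_{e(t)}(x(t)))[\dot{e}(t)] + D^2 q_{e(t)}(x(t))[\dot{x}(t)]$, I would invoke Lemma~\ref{t.h.j} with $\Delta e = x(t) - e(t)$; its hypothesis $\ppie{i}{e(t)}(x(t)) = 0 \neq \ppie{i+1}{e(t)}(x(t))$ holds because $x(t) \in \partial\Lambdaiep{i}{e(t)} \cap \Lambdaiepp{i+1}{e(t)}$. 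The lemma rewrites $D_{e}(Dq_{e(t)}(x(t)))[\dot{e}(t)]$ as the sum of $Dq_{e(t)}(x(t))$ and two scalar-weighted copies of $D\ppie{i-1}{e(t)}(x(t))$ and $D\ppie{i+1}{e(t)}(x(t))$.

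Finally, I would substitute these computations into the differentiated identity $\dot{y}(t)^*\, A = \frac{d}{dt}\bigl[\tfrac{c^*}{c^*(e(t)-x(t))} - Dq_{e(t)}(x(t))\bigr]$ and use the original first-order condition to replace $\tfrac{c^*}{c^*(e(t)-x(t))} - Dq_{e(t)}(x(t))$ by $y(t)^*A$. The $Dq_{e(t)}(x(t))$ piece emitted by Lemma~\ref{t.h.j} cancels against the matching piece inherited from this substitution; rearranging the survivors and pulling $\frac{i\,\ppie{i-1}{e(t)}(x(t))}{\ppie{i+1}{e(t)}(x(t))}$ out of the remaining $D\ppie{i\pm 1}{e(t)}(x(t))$ contributions yields the claimed identity, with the ``new'' $y(t)^*$ in the conclusion denoting $y(t)^* - \dot{y}(t)^*$ (both images under $A^*$). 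The only obstacle is bookkeeping: one must carefully track the sign that appears when differentiating $1/c^*(e(t)-x(t))$ in concert with $\dot{e} = x - e$, and must verify that the spurious $Dq_{e(t)}(x(t))$ term emitted by Lemma~\ref{t.h.j} is precisely the term supplied by the first-order condition, so that the two cancel and the right-hand side collapses to the compact form in the statement.
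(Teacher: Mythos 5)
Your proposal is correct and follows essentially the same route as the paper's proof: start from the first-order condition supplied by Corollary~\ref{t.g.d}, differentiate in $t$ (with analyticity of $t \mapsto y(t)^*$ from surjectivity of $A$), apply Lemma~\ref{t.h.j} with $\Delta e = \dot{e}(t) = x(t)-e(t)$, and cancel the emitted $Dq_{e(t)}(x(t))$ term against the original first-order condition. The sign bookkeeping you flag works out exactly as you describe, with the lemma's $y(t)^*$ being the difference of the multiplier and its derivative.
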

\begin{proof}  Corollary~\ref{t.g.d}  shows $ x(t) $ is optimal for the convex optimization problem
\[ \begin{array}{rl}
 \min_x & - \ln c^*(e(t)-x) - q_{e(t)}(x) \\
 \mathrm{s.t.} & Ax = b  
\end{array} \]
and hence there exists $ w(t)^* $ satisfying the first-order condition
\begin{equation}  \label{e.h.g}
   \frac{1}{c^*(e(t)-x(t))} \, c^* - Dq_{e(t)}(x(t)) = w(t)^* A \; . 
\end{equation} 
Since $ t \mapsto e(t) $ and $ t \mapsto \xie{i}{e(t)} $ are analytic, so is $ t \mapsto w^*(t) $ (using that $ A $ is surjective, by assumption).  Differentiating in $ t $ gives
\[\frac{c^*( \dot{e}(t)- \dot{x}(t))  }{(c^*(e(t)-x(t)))^2} \,  c^* + D^2q_{e(t)}(x(t))[ \dot{x}(t) ] + D_{e(t)}(Dq_{e(t)}(x(t)))[ \dot{e}(t)] =  - \dot{w}(t)^* A  \; . \]
To complete the proof, substitute 
\[  \dot{e}(t) = x(t) - e(t) \quad \textrm{and} \quad   \frac{1}{c^*(e(t)-x(t)) } c^* = Dq_{e(t)}(x(t)) + w(t)^*A \; \, \, \textrm{(by (\ref{e.h.g}))} \; , \]
and  then use Lemma~\ref{t.h.j}  to substitute for $ D_{e(t)}(Dq_{e(t)}(x(t)))[ x(t) - e(t)] $. \end{proof} 

\noindent 
{\bf {\em Proof of Proposition~\ref{t.h.e}.}}  To temper notation, for arbitrary $ t $ satisfying $ 0 \leq t < T $, let 
\[  e :=  e(t), \quad  x :=  x(t), \quad  \dot{e} := \dot{e}(t) \quad \textrm{and} \quad   \dot{x} := \dot{x}(t). \]
Observe that 
\begin{align*}
  &   \frac{d}{dt} \,  \ln \frac{\left(  -\ppie{i-1}{e(t)}(x(t))\right)^{n-i}}{\left(   \ppie{i+1}{e(t)}(x(t)) \right)^{n-i+1}}  \\ 
& \quad  = (n-i) \,  \smfrac{d}{dt} \ln \left(  -\ppie{i-1}{e(t)}(x(t)) \right)  - (n-i+1) \,  \smfrac{d}{dt} \ln \ppie{i+1}{e(t)}(x(t))  \\
    & \quad =  \smfrac{n-i}{\ppie{i-1}{e}(x)} \left(   D\ppie{i-1}{e}(x)[ \dot{x} ] 
 + D_e (\ppie{i-1}{e}(x))[\dot{e}] \right)  \\ 
& \quad \qquad - \smfrac{n-i+1}{\ppie{i+1}{e}(x)}  \left(   D\ppie{i+1}{e}(x)[ \dot{x} ] + D_e (   \ppie{i+1}{e}(x) ) [\dot{e}] \right)    \; . 
\end{align*}
Using Lemma~\ref{t.h.k}  to substitute for $ \smfrac{n-i}{\ppie{i-1}{e}(x)} D\ppie{i-1}{e}(x) - \smfrac{n-i+1}{\ppie{i+1}{e}(x)}  D\ppie{i+1}{e}(x) $ gives 
\begin{align*}
&   \frac{d}{dt} \,  \ln \frac{\left(  -\ppie{i-1}{e(t)}(x(t))\right)^{n-i}}{\left(   \ppie{i+1}{e(t)}(x(t)) \right)^{n-i+1}} \\ 
&  \quad = \, - \, \frac{\ppie{i+1}{e}(x)}{i \ppie{i-1}{e}(x)} \left( D^2q_{e}(x)[ \dot{x}, \dot{x} ]  - \left(   \frac{c^* \dot{x} }{c^*(e-x )} \right)^2 \right) \\
& \quad \qquad + (n-i) \frac{D_e (\ppie{i-1}{e}(x))[ \dot{e}] }{\ppie{i-1}{e}(x)} - (n-i+1)  \frac{D_e (\ppie{i+1}{e}(x))[ \dot{e}] }{\ppie{i+1}{e}(x)} \quad \textrm{(using $ y^*(t) A \dot{x}(t) = 0$)} \\
& \quad \leq \frac{\ppie{i+1}{e}(x)}{i \ppie{i-1}{e}(x)} \left(   \frac{c^* \dot{x}  }{c^*(e-x)} \right)^2 + (n-i) \frac{D_e (\ppie{i-1}{e}(x))[ \dot{e}] }{\ppie{i-1}{e}(x)} - (n-i+1)  \frac{D_e (\ppie{i+1}{e}(x))[ \dot{e}] }{\ppie{i+1}{e}(x)} \; , 
\end{align*}
where the inequality is due to the combination of $ \ppie{i+1}{e}(x)/\ppie{i-1}{e}(x)  $ being negative (according to (\ref{e.c.d}) and (\ref{e.d.b})) and $ D^2q_e(x) $ being negative semidefinite (Theorem~\ref{t.g.b}). 

To complete the proof, simply substitute the following expressions which result from use of $ \dot{e} = x - e $, (\ref{e.c.f}) and (\ref{e.h.a}): 
\begin{align*} 
  D_e (\ppie{i-1}{e}(x))[ \dot{e}] & = (i-1) D\ppie{i-2}{e}(x) [ x -e ] \\
                          & = (i-1) \left(  (n-i+2) \ppie{i-2}{e}(x) - \ppie{i-1}{e}(x) \right)  \; , \\ & \\
 D_e (\ppie{i+1}{e}(x))[ \dot{e}]   & = (i+1) D\ppie{i}{e}(x)[ x - e ] \\
&  = (i+1) \left(   0 - \ppie{i+1}{e}(x) \right)   
\end{align*}
-- substitute, also, 
\begin{align*}
\frac{c^* \dot{x}  }{c^*(e-x)}  & = Dq_e(x)[ \dot{x}] \qquad \textrm{(first-order condition arising from Corollary \ref{t.g.d})} \\
& = \frac{D\ppie{i}{e}(x)[ \dot{x}] }{\ppie{i+1}{e}(x)}   \qquad \textrm{(using $ \ppie{i}{e}(x) = 0 $)} \\
& =  - \, \frac{ D_e( \ppie{i}{e}(x)) [\dot{e}] }{\ppie{i+1}{e}(x)}   \qquad \textrm{(because  $ \smfrac{d}{dt} \ppie{i}{e(t)}(x_{e(t)}) = 0 $)} \\
& = -i \, \frac{ D\ppie{i-1}{e}(x)[x-e] }{\ppie{i+1}{e}(x)} \\
& = -i \, \frac{ (n-i+1) \ppie{i-1}{e}(x) - 0 }{\ppie{i+1}{e}(x)} \; .  
\end{align*}
\hfill $ \Box $ 

\section{{\bf Proof of Part II of the Main Theorem}}  \label{s.i}

Recall the optimization problem dual to $ \hp $:
  \[  
 \left. \begin{array}{rl}
\sup_{y^*,s^*} & y^*b \\
\mathrm{s.t.} & y^* A + s^* = c^* \\
& s^* \in \Lambdap^* \end{array} \quad \right\} \, \hp^* \; ,  
\] where $ \Lambdap^* $ is the cone dual to $ \Lambdap $.  Recall that just as a matter of definitions, the optimal value of $ \hp^* $ satisfies $ \val^* \leq \val $ (``weak duality''), where $ \val $ is the optimal value of $ \hp $.

Recall that a pair $ (y^*,s^*) $ satisfying the constraints is said to be ``strictly'' feasible if $ s^* \in \int(\Lambdap^*) $ (interior).  

For $ e \in \swath(i) \setminus \core(i) $, define
\[  \sie{i}{e} :=  \frac{c^*(e-x) }{\ppie{i+1}{e}(x)} D \ppie{i}{e}(x) \quad \textrm{where } x = \xie{i}{e} \; . \]

We now restate and prove \hyperlink{targ_main_thm_part_two}{Part II} of the Main Theorem.
\begin{thm}  \label{t.i.a}
Assume $ 1 \leq i \leq n-2 $  and let $ \{ e(t): 0 \leq t < T \} $ be a maximal trajectory for the dynamics $ \dot{e}(t) = \xie{i}{e(t)} - e(t) $ starting at $ e(0)  \in \swath(i) \setminus \core(i) $.  Then $ y^*A + \sie{i}{e(t)} = c^* $ has a unique solution $ y^* = \yie{i}{e(t)} $, and  the pair $ (\yie{i}{e(t)},\sie{i}{e(t)}) $ is strictly feasible for $ \hp^* $.  Moreover, 
\[   \yie{i}{e(t)}b  =  c^* \xie{i}{e(t)} \xrightarrow[t \rightarrow T]{} \val \] 
(in fact, increases to $ \val $ strictly monotonically) and the path $ t \mapsto (\yie{i}{e(t)},\sie{i}{e(t)}) $ is bounded.
\end{thm}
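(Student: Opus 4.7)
The plan is to split the theorem into four steps: (i) existence and uniqueness of $\yie{i}{e(t)}$; (ii) strict feasibility of $(\yie{i}{e(t)}, \sie{i}{e(t)})$; (iii) the identity $\yie{i}{e(t)}\, b = c^* \xie{i}{e(t)}$, from which monotone convergence to $\val$ is immediate via Part~I; and (iv) boundedness of the dual path, which I expect to be the delicate step.

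Write $x := \xie{i}{e(t)}$ and $e := e(t)$. Steps (i) and (iii) start from the first-order condition $\lambda c^* = D\ppie{i}{e}(x) + y^* A$ with some $\lambda > 0$ and $y^*$, guaranteed by the optimality of $x$ in $\hpie{i}{e}$. Applying both sides to $e - x$ and using $D\ppie{i}{e}(x)[x] = (n-i)\ppie{i}{e}(x) = 0$ together with $D\ppie{i}{e}(x)[e] = \ppie{i+1}{e}(x) > 0$ (by (\ref{e.a.c})) yields $\lambda = \ppie{i+1}{e}(x)/c^*(e-x)$. This is positive: $x \notin \Lambdap$ by (\ref{e.d.b}) while $e \in \Lambdapp$, so $x \neq e$, and hence $c^* e > c^* x$ by uniqueness of $x$ as the optimizer. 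Dividing the first-order equation by $\lambda$ identifies $\sie{i}{e}$ as the slack, so $\yie{i}{e} := y^*/\lambda$ is the unique solution to $y^* A + \sie{i}{e} = c^*$ (uniqueness by surjectivity of $A$). The same homogeneity identities yield $\sie{i}{e}\, e = c^*(e-x)$ and $\sie{i}{e}\, x = 0$, so $\yie{i}{e}\, b = \yie{i}{e}(Ae) = c^* e - c^*(e-x) = c^* x$; combined with the strict monotone convergence $c^* \xie{i}{e(t)} \nearrow \val$ of Theorem~\ref{t.h.a} this proves (iii).

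For (ii), note that $\sie{i}{e}$ is a positive multiple of $D\ppie{i}{e}(x)$, and that for regular $\Lambdap$, strict feasibility amounts to $D\ppie{i}{e}(x)\, z > 0$ for every $z \in \Lambdap \setminus \{0\}$. Nonnegativity is the standard fact that the gradient of a hyperbolic polynomial at a point of its closed hyperbolicity cone lies in the dual of that cone, applied to $\ppie{i}{e}$ with $\Lambdap \subseteq \Lambdaiep{i}{e}$. For strictness, let $H := \ker D\ppie{i}{e}(x)$, a hyperplane since $D\ppie{i}{e}(x)[e] = \ppie{i+1}{e}(x) > 0$; by Theorem~\ref{t.e.c}(B), $H$ is the tangent to $\partial\Lambdaiep{i}{e}$ at $x$, so $H \cap \Lambdaiep{i}{e}$ is a boundary face of $\Lambdaiep{i}{e}$ containing the ray through $x$. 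By Theorem~\hyperlink{targ_thm_one}{1}(B), this face is either a face of $\Lambdap$ or exactly the single ray through $x$; the former is impossible because $x \notin \Lambdap$ by (\ref{e.d.b}). Hence $H \cap \Lambdap = \{0\}$, proving strictness.

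The main obstacle is (iv). I argue by contradiction: suppose $\alpha_k := \|(\yie{i}{e(t_k)}, \sie{i}{e(t_k)})\| \to \infty$ along some sequence, and extract a unit-norm limit $(\tilde y^*, \tilde s^*)$ from $(\yie{i}{e(t_k)}, \sie{i}{e(t_k)})/\alpha_k$. Dividing $y^* A + s^* = c^*$ by $\alpha_k$ and passing to the limit gives $\tilde y^* A + \tilde s^* = 0$; closedness of $\Lambdap^*$ gives $\tilde s^* \in \Lambdap^*$; and since $\yie{i}{e(t_k)}\, b = c^* \xie{i}{e(t_k)}$ stays bounded (it tends to $\val$) by (iii), dividing by $\alpha_k$ yields $\tilde y^*\, b = 0$. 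Evaluating $\tilde y^* A + \tilde s^* = 0$ at the strictly feasible point $e(0) \in \Lambdapp$ (with $A e(0) = b$) gives $\tilde s^*\, e(0) = -\tilde y^* b = 0$; regularity of $\Lambdap$ forces $\tilde s^* = 0$, and then surjectivity of $A$ forces $\tilde y^* = 0$, contradicting $\|(\tilde y^*, \tilde s^*)\| = 1$.
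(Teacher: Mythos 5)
Your proof is correct, and its skeleton coincides with the paper's: both reduce everything to the identities $\sie{i}{e}\,\xie{i}{e}=0$ and $\sie{i}{e}\,e=c^*(e-\xie{i}{e})$, deduce $\yie{i}{e}b=c^*\xie{i}{e}$ from them, quote Part I for the strict monotone convergence to $\val$, and exploit the Slater point $e(0)$ for boundedness (your normalization-and-contradiction argument and the paper's direct computation $\sie{i}{e(t)}e(0)=c^*e(0)-\yie{i}{e(t)}b\rightarrow c^*e(0)-\val$, combined with the boundedness of $\{s^*\in\Lambdap^*:s^*e(0)\leq M\}$, are the same fact in two guises). The one genuinely different piece is strict feasibility. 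The paper proves $\sie{i}{e}z>0$ for $0\neq z\in\Lambdap$ by using the concavity of $q_e=\ppie{i}{e}/\ppie{i+1}{e}$ (Theorem~\ref{t.g.b}) together with the observation, drawn from Corollary~\ref{t.d.b}, that the open segment from $\xie{i}{e}$ to $z$ lies in $\Lambdaiepp{i}{e}$. You instead combine the tangent-space description of Theorem~\ref{t.e.c}(B) with the facial dichotomy of Corollary~\ref{t.d.b}(B) to conclude that $\ker D\ppie{i}{e}(\xie{i}{e})$ meets $\Lambdap$ only at the origin. Both routes lean on Theorem 1; yours has the virtue of bypassing Theorem 7 entirely, while the paper's is shorter once Theorem 7 is in hand and reuses machinery it needs anyway for Corollary 9. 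Similarly, you extract the multiplier $\lambda=\ppie{i+1}{e}(x)/c^*(e-x)$ directly from the KKT conditions of $\hpie{i}{e}$ --- legitimate, since Theorem~\ref{t.e.c}(A),(B) make the conic constraint a single smooth inequality with nonvanishing gradient near $\xie{i}{e}$ --- rather than routing through Corollary~\ref{t.g.d}; the computation is the same one that appears inside the paper's proof of Theorem~\ref{t.g.c}.
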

\begin{proof}   Assume $ e \in \swath(i) \setminus \core(i) $.  By Corollary~\ref{t.g.d}, $ \xie{i}{e} $ is optimal for the convex optimization problem
\[  \begin{array}{rl}
 \min_x & - \ln c^*(e-x) - \frac{\ppie{i}{e}(x)}{\ppie{i+1}{e}(x)} \\
 \mathrm{s.t.} & Ax = b \; , 
\end{array} \]
and hence there exists $ y = \yie{i}{e} $ satisfying the (rearranged) first-order condition
\[  A^*y + \sie{i}{e} = c \; ,\]
where, letting $ q_e := \ppie{i}{e}/\ppie{i+1}{e} $ and $ x_e = \xie{i}{e} $, 
\[ \sie{i}{e} := \left( c^*(e - x_e) \right)  Dq_e( x_e) = \frac{c^*(e-x_e) }{\ppie{i+1}{e}(x_e)} D \ppie{i}{e}(x_e), \]
(using $ \ppie{i}{e}(x_e) = 0 $). 
 Thus, to show the pair $ (\yie{i}{e}, \sie{i}{e}) $ is strictly feasible for $ \hp^* $, it suffices to show $ \sie{i}{e} \in \int(\Lambdap^*) $, that is, assuming $ z \in \Lambdap $, $ z \neq 0 $, it suffices to show $ Dq_e(x_e)[z] > 0 $.

Corollary~\ref{t.d.b}  shows $ ( \partial \Lambdaiep{i}{e}) \setminus \Lambdap $ contains no line segments of positive length other than those lying in rays $ \{ sx: s > 0 \} $.  Thus, the line segment connecting $ x_e $ to $ z $ lies entirely within $ \Lambdaiepp{i}{e} $ with the exception of the point $ x_e $ and possibly the point $ z $.  Hence,  $ z(s) := (1-s)z + sx_e $ satisfies $ q_e(z(s)) > 0 $ for $ 0 < s < 1 $.  Fix $ s $ strictly between 0 and 1.

Concavity of $ q_e $ (according to Theorem~\ref{t.g.b}) and $ q_e(x_e) = 0 $ imply $ q_e(z(s))  \leq Dq_e(x_e)[z(s) - x_e] $, whereas  $ q_e(x_e) = 0 $ and homogeneity of $ q_e $ give  $ Dq_e(x_e)[x_e] = 0 $.  Thus,
\[  0 < q_e(z(s)) \leq  Dq_e(x_e)[z(s)-x_e] = (1-s) Dq_e(x_e)[z] \; , \]
completing the proof that $ (\yie{i}{e}, \sie{i}{e}) $ is strictly feasible for $ \hp^* $. Additionally observing
\begin{align*}
  \yie{i}{e} b & = \yie{i}{e} A x_e  \\
               & =  c^* x_e - \sie{i}{e} x_e \\
               & = c^* x_e -  \left(  c^*(e-x_e) \right)  Dq_e(x_e)[x_e] \\
               & = c^* x_e \; , 
\end{align*}
we thus see for a maximal trajectory $ \{ e(t): 0 \leq t < T \} $, each pair $ (\yie{i}{e(t)}, \sie{i}{e(t)}) $ is strictly feasible for $ \hp^* $, and 
\begin{align*}
     \yie{i}{e(t)} b & = c^* \xie{i}{e(t)} \\
                     & \qquad \rightarrow \val \quad \textrm{strictly monotonically}  \quad \textrm{(by Proposition~\ref{t.h.b})} \; .
\end{align*}

It only remains to show the path $ t \mapsto (\yie{i}{e(t)}, \sie{i}{e(t)}) $ is bounded, for which it suffices to show $ t \mapsto \sie{i}{e(t)} $ is bounded (as $ A $ is surjective, by assumption). In turn, because $ e(0) \in \Lambdapp $ and $ \sie{i}{e(t)} \in \Lambdap^* $, it suffices to show the value $ \sie{i}{e(t)} e(0) $  is bounded from above independent of $ t $.  However, 
\begin{align*}
    \sie{i}{e(t)} e(0) & = c^*e(0) - \yie{i}{e(t)} A e(0) \\
                   & = c^* e(0) - \yie{i}{e(t)} b \\
                   & \qquad \qquad \rightarrow \, c^* e(0) - \val \; , 
\end{align*}
concluding the proof.  \end{proof} 

\section{{\bf  Proof of Theorem 6}}  \label{s.j}

We have now finished our analysis of the trajectories $ t \mapsto e(t) $ (and paths $ t \mapsto \xie{i}{e(t)} $) arising from the differential equation $ \dot{e}(t) = \xie{i}{e(t)} - e(t) $ on the set $ \swath(i) \setminus \core(i) $.  However, in order to have a relatively complete picture of all of $ \swath(i) $, results regarding the structure of $ \core(i) $ are needed.  That is the purpose of this section.

Specifically, just as the dynamics $ \dot{e}(t) = \xie{i}{e(t)} - e(t) $ on $ \swath(i) \setminus \core(i) $ leads to optimality, it would be nice as a matter of formalism to know that if $ e \in \core(i) $ and $ x \in \opt $, then moving $ e $ towards $ x $ results in a point $ e + t(x-e) $ ($ 0 < t < 1 $) also in $ \core(i) $, thus retaining optimality.   

\begin{thm} \label{t.j.a}
Assume $ 1 \leq i \leq n-2 $.  If $ e \in \core(i) $ and $ x \in \opt $ then
\[    \{ e + t(x-e) \in \Lambdapp : t \in \mathbb{R} \} \subseteq \core(i) \; . \]
\end{thm}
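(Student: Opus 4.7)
The plan is to fix the line $t \mapsto e(t) := (1-t)e + tx$, observe that $I := \{t \in \mathbb{R} : e(t) \in \Lambdapp\}$ is an open interval containing $0$, and show $J := \{t \in I : e(t) \in \core(i)\} = I$ by verifying $J$ is nonempty (it contains $0$), relatively closed, and relatively open in the connected set $I$.

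Before the topology, I would perform two reductions. First, $A e(t) = (1-t) A e + t A x = b$, so each $e(t)$ satisfies the affine constraint. Second, by Corollary~\ref{t.d.b}(A) the intersection $\Lambdap \cap \partial \Lambdaiep{i}{e'}$ is independent of $e' \in \Lambdapp$; and because $e \in \core(i)$ combined with $c^* \notin \mathrm{im}(A^*)$ forces optima of $\hpie{i}{e}$ to the boundary of $\Lambdaiep{i}{e}$, we have $\opt \subseteq \Lambdap \cap \partial \Lambdaiep{i}{e} = \Lambdap \cap \partial \Lambdaiep{i}{e(t)} \subseteq \feasie{i}{e(t)}$ for every $t \in I$. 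This gives $\valie{i}{e(t)} \leq \val$, and whenever $\valie{i}{e(t)} = \val$ every $\hat{x} \in \opt$ is optimal for $\hpie{i}{e(t)}$; the trichotomy (\ref{e.a.b}) then rules out the ``single point disjoint from $\feasie{i-1}{e(t)}$'' alternative since $\opt \subseteq \feas \subseteq \feasie{i-1}{e(t)}$, forcing $\optie{i}{e(t)} = \opt$. Thus it suffices to show $\valie{i}{e(t)} = \val$ for every $t \in I$.

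For closedness of $J$ in $I$, I would take a sequence $t_k \in J$ with $t_k \to t^\star \in I$ and prove $\valie{i}{e(t^\star)} = \val$. Were some $z \in \feasie{i}{e(t^\star)}$ to satisfy $c^* z < \val$, the point $z_\varepsilon := (1-\varepsilon) z + \varepsilon e(t^\star)$ would lie in $\Lambdaiepp{i}{e(t^\star)}$ for every $\varepsilon \in (0,1]$ (using strict feasibility of $e(t^\star)$ together with (\ref{e.c.c})) and would satisfy $c^* z_\varepsilon < \val$ for $\varepsilon$ sufficiently small; joint continuity of $(e', z') \mapsto \ppie{j}{e'}(z')$ would then place $z_\varepsilon \in \Lambdaiepp{i}{e(t_k)}$ for $k$ large, contradicting $\valie{i}{e(t_k)} = \val$.

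The main obstacle is openness of $J$. The key structural input is that the multiplicity $\mult(\hat{x})$ of $0$ as a root of $s \mapsto p(\hat{x} + s e')$ is intrinsic to $\hat{x}$, independent of $e' \in \Lambdapp$; thus the active-constraint set of $\hpie{i}{e(t)}$ at any $\hat{x} \in \opt$ is the fixed family $\{j : i \leq j \leq \mult(\hat{x}) - 1\}$, independent of $t$. At $t_0 \in J$, strong duality (Slater's condition holds at the strictly feasible $e(t_0)$) produces a dual-optimal pair $(y^*_0, s^*_0)$ for $\hpie{i}{e(t_0)}$ with $y^*_0 b = \val$ and $s^*_0 \in (\Lambdaiep{i}{e(t_0)})^*$ realized via KKT at some $\hat{x} \in \opt$ as $s^*_0 = \sum_j \mu_j(t_0) D \ppie{j}{e(t_0)}(\hat{x})$ with $\mu_j(t_0) \geq 0$. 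The plan for openness is to perturb $(y^*(t), \mu_j(t))$ analytically in $t$ by applying the implicit function theorem to the system $c^* = y^*(t) A + \sum_j \mu_j(t) D \ppie{j}{e(t)}(\hat{x})$ on a basis of linearly independent active gradients modulo $\mathrm{im}(A^*)$, and then verifying that nonnegativity persists. The principal difficulty --- and the likely source of the length of the proof --- is the possibility of non-strict complementarity: if some $\mu_j(t_0)$ vanishes at an active constraint, or if the active gradients fail to span the relevant quotient modulo $\mathrm{im}(A^*)$, a direct implicit function argument breaks down and a more delicate parametric analysis, plausibly via reduction to the minimal face of $\Lambdap$ containing $\opt$ (which by Corollary~\ref{t.d.b}(A) is simultaneously a boundary face of every $\Lambdaiep{i}{e(t)}$), will be required.
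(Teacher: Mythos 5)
Your skeleton (restrict to the line $e(t)=e+t(x-e)$, let $I$ be the interval where $e(t)\in\Lambdapp$, and run a connectedness argument) matches the paper's, and your reduction to showing $\valie{i}{e(t)}=\val$ together with your closedness argument are sound. The gap is the openness step, which is the entire substance of the theorem, and your proposed mechanism for it does not work. First, there is a sufficiency problem you do not address: the constraint functions $\ppie{j}{e(t)}$ in the representation $\Lambdaiep{i}{e(t)}=\{z:\ppie{j}{e(t)}(z)\ge 0,\ j=i,\dots,n-1\}$ are not concave, so a stationarity identity $c^*=y^*A+\sum_j\mu_j D\ppie{j}{e(t)}(\hat x)$ with $\mu_j\ge 0$ at an active point does not certify global optimality of $\hat x$ for $\hpie{i}{e(t)}$; the paper's only optimality characterization of this flavor (Corollary 9, via the concave function $q_e$) explicitly applies only to optimal solutions \emph{outside} $\partial\Lambdaiep{i+1}{e}$, whereas the points of $\opt$ relevant here typically lie in $\Lambdap\cap\partial\Lambdaiep{i+1}{e}$ (the case $x\in\opt\cap\Lambdaiepp{i+1}{e}$ is exactly the "easy case" the paper disposes of separately in Theorem~\ref{t.j.d}). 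Second, even setting sufficiency aside, you yourself concede that the implicit-function perturbation of multipliers fails under non-strict complementarity or degenerate active gradients and that "a more delicate parametric analysis will be required" --- but that delicate analysis is precisely the theorem, and you have not supplied it.

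The paper's actual route to openness is quite different and worth contrasting. It shows $e(t)\in\core(i)$ iff for every $d$ with $Ad=0$ and $c^*d=0$ the ray $x+\ray(d)$ misses $\Lambdaiepp{i}{e(t)}$, and it controls this ray condition by the sign of $\beta_j(t)$, the first nonvanishing coefficient of the univariate polynomial $s\mapsto\ppie{j}{e(t)}(x+sd)$. The crux (Proposition~\ref{t.j.g}) is a monotonicity-of-vanishing-order statement: the order of vanishing of $s\mapsto\ppie{\ell}{e}(x+sd)$ at $s=0$ is nonincreasing in $\ell$, proven by counting roots in shrinking intervals via interlacing and the $e$-independence of root multiplicities (the Helton--Vinnikov input, Corollary~\ref{t.j.f}). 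This is what guarantees that when $\beta_j(0)<0$ the same coefficient stays the leading one and stays negative for nearby $t$. Your observation that the active set $\{j: i\le j\le \mult(\hat x)-1\}$ is independent of $t$ is true but is only the zeroth-order shadow of this; it does not by itself prevent the ray $x+\ray(d)$ from entering $\Lambdaiepp{i}{e(t)}$ for $t$ arbitrarily close to $t_0$. As it stands, the proposal proves the theorem only under nondegeneracy hypotheses it does not state, and so does not establish the result.
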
 
\noindent 
Extending the theorem to include $ i = 0 $ is trivial, because $ \core(0) $ is precisely the interior of $ \feas $.  It also is easy to extend the theorem to $ i = n-1 $, simply because $ \core(n-1) = \emptyset $.\footnote{To see $ \core(n-1) = \emptyset $, recall that  $  (\partial \Lambdaiep{n-1}{e}) \cap \Lambdap $ is precisely the lineality space of $ \Lambdap $ (by Theorem~\ref{t.d.a}(C)).  Thus, since $ \Lambdap $ is regular (by assumption) and since the origin is infeasible    (because $ b \neq 0 $), the boundary of $ \feas $  cannot intersect the boundary of $ \feasie{n-1}{e} $; in particular, it cannot happen that $ \opt $ intersects $ \optie{n-1}{e} $.}  We state the theorem only for $ 1 \leq i \leq n-2 $ to avoid having to be mindful of the special cases $ i = 0, n-1 $ during the course of the proof.

By bootstrapping the theorem, the following elementary corollary provides some additional insight into the structure of $ \core(i) $.  The corollary is a restatement of Theorem \hyperlink{targ_thm_six}{6}.
\begin{cor} \label{t.j.b}
 If $ e \in \core(i) $ and $ {\mathcal A} $ is the smallest affine space containing both $ e $ and $ \opt $, then 
\[  {\mathcal A} \cap \Lambdapp  \subseteq \core(i) \; . \] 
\end{cor}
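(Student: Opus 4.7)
The plan is to deduce the corollary from Theorem~\ref{t.j.a} by showing that $\core(i) \cap {\mathcal A}$ is both relatively open and relatively closed in ${\mathcal A} \cap \Lambdapp$; since the latter set is convex and open in ${\mathcal A}$, it is connected, and it contains $e \in \core(i) \cap {\mathcal A}$, so the clopen-plus-connected argument will give $\core(i) \cap {\mathcal A} = {\mathcal A} \cap \Lambdapp$.

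For openness at an arbitrary $e^* \in \core(i) \cap {\mathcal A}$, I would first choose $x_1, \ldots, x_r \in \opt$ so that $\{x_k - e\}$ is a basis of the tangent space $W := {\mathcal A} - e$ of the affine space ${\mathcal A}$, and then verify that $\{x_k - e^*\}$ also spans $W$. Under the standing assumption $c^* \notin \mathrm{range}(A^*)$, the optimizer set $\opt$ is a proper face of $\feas$ and therefore lies in $\partial \Lambdap$, which in turn forces $\mathrm{aff}(\opt) \cap \Lambdapp = \emptyset$; thus $e^* \notin \mathrm{aff}(\opt)$, which in coordinates $e^* - e = \sum_k a_k(x_k - e)$ reads $\sum_k a_k \neq 1$, and a short algebraic manipulation produces the desired change of basis. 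Next, define the iterated-move map $G^* : \reals^r \to {\mathcal A}$ by $e^*_0 := e^*$, $e^*_k(t) := (1-t_k) e^*_{k-1}(t) + t_k x_k$ for $k=1,\ldots,r$, and $G^*(t) := e^*_r(t)$. A direct computation gives $G^*(0) = e^*$ and $\partial_{t_k} G^*|_0 = x_k - e^*$, so $DG^*(0) : \reals^r \to W$ is an isomorphism and the inverse function theorem yields a neighborhood of $0$ mapped onto a neighborhood of $e^*$ in ${\mathcal A}$. For $t$ near the origin, each intermediate $e^*_k(t)$ lies close to $e^*$ and hence in $\Lambdapp$; an induction on $k$ via Theorem~\ref{t.j.a}, applied with base $e^*_{k-1}(t) \in \core(i)$ and point $x_k \in \opt = \optie{i}{e^*_{k-1}(t)}$, then places every $e^*_k(t)$ in $\core(i)$, so that $G^*(t) \in \core(i) \cap {\mathcal A}$.

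For closedness, let $e_n \in \core(i) \cap {\mathcal A}$ converge to some $e^* \in {\mathcal A} \cap \Lambdapp$. The key step is $\valie{i}{e^*} = \val$: otherwise there exists $\tilde x \in \feasie{i}{e^*}$ with $c^* \tilde x < \val$, and by nudging toward $e^*$ we may assume $\ppie{j}{e^*}(\tilde x) > 0$ for all $j = i, \ldots, n-1$, so continuity of $(e,x) \mapsto \ppie{j}{e}(x)$ puts $\tilde x \in \feasie{i}{e_n}$ for large $n$, contradicting $\valie{i}{e_n} = \val$. Combined with $\opt \subseteq \feasie{i}{e^*}$ this gives $\opt \subseteq \optie{i}{e^*}$, and the trichotomy of Corollary~\ref{t.d.b}(B) forces $\optie{i}{e^*} = \opt$ (the remaining alternatives --- empty or a single point outside $\Lambdap$ --- are incompatible with the inclusion), proving $e^* \in \core(i)$. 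The main obstacle is the change-of-basis step in the openness argument: it is the only point where the standing non-degeneracy assumptions enter essentially, and without it the iterated-move map $G^*$ could fail to be a submersion; once that geometric fact is in place, Theorem~\ref{t.j.a} does the substantive work of propagating $\core(i)$-membership along the iteration.
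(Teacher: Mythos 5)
Your argument is correct, but it takes a genuinely different and considerably heavier route than the paper's. The paper also bootstraps Theorem~\ref{t.j.a}, but does so explicitly and in just two steps: after disposing of the case where $\opt$ is a single point, it fixes $\tilde{x} \in \relint(\opt)$, writes an arbitrary $y \in {\mathcal A} \cap \Lambdapp$ as $y = \tilde{x} + \alpha\,(e-\tilde{x}) + \beta\,(x - \tilde{x})$ with $x \in \opt$ and $\beta \geq 0$, observes that $\alpha > 0$ because $c^*y, c^*e > \val$, and then reaches $y$ from $e$ by two applications of Theorem~\ref{t.j.a}: first along the segment from $e$ to $x$, landing at $z = \smfrac{\alpha}{\alpha+\beta}\,e + \smfrac{\beta}{\alpha+\beta}\,x \in \core(i)$, and then along the line through $z$ and $\tilde{x}$. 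That is shorter and entirely elementary. Your clopen-plus-connectedness scheme does work, and your closedness half has some independent interest (it in effect shows $\core(i)$ is relatively closed in $\relint(\feas)$, a fact the paper never isolates), but the openness half costs you the inverse function theorem, the iterated-move map, and the change-of-basis computation. One imprecision there: the implication ``$\opt \subseteq \partial\Lambdap$ forces $\mathrm{aff}(\opt) \cap \Lambdapp = \emptyset$'' is not valid for arbitrary subsets of the boundary; what actually does the work is that $\mathrm{aff}(\opt) \subseteq \{x : Ax = b, \ c^*x = \val\}$, so any point of $\mathrm{aff}(\opt) \cap \Lambdapp$ would be an optimal point of $\relint(\feas)$, forcing $c^* \in \range(A^*)$ --- a one-line fix, but the step as written skips the part of the argument that carries the weight.
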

\begin{proof} 
If $ \opt $ contains only a single point, then the corollary is nothing more than a restatement of the theorem.  Thus, assume $ \opt $ contains more than a single point, and fix $ \tilde{x}     \in \relint(\opt) $.  Then
\[ 
{\mathcal A} = \{ \tilde{x} + \alpha \, ( e - \tilde{x}) + \beta \, (x - \tilde{x}): \alpha \in \mathbb{R}, \, \beta \geq 0 \textrm{ and } x \in  \opt \} \; . 
\]

Assume $ y \in {\mathcal A} \cap \Lambdapp $, and fix $ \alpha \in \mathbb{R} $, $ \beta \geq 0 $ and $ x \in \opt $ satisfying
\[  y  = \tilde{x} + \alpha \, ( e - \tilde{x}) + \beta \, (x - \tilde{x}) \; . \]
To complete the proof, we show $ y \in \core(i) $.

Since $ y $ and $ e $  lie in the interior of $ \feas $, we have $ c^* y, c^* e > \val $ ($ = c^* \tilde{x}, c^* x $), from which follows $ \alpha > 0 $.  Thus, the point 
\[  z :=  \smfrac{\alpha }{\alpha + \beta } e + \smfrac{\beta }{\alpha + \beta } \, x \]
is well-defined, is a convex combination of $ e $ and $ x $, and lies in $ \Lambdapp $ (because $ e \in \Lambdapp $, $ x \in \Lambdap $ and $ \alpha > 0 $).  As $ e \in \core(i) $ and $ x \in \opt $, Theorem~\ref{t.j.a}  implies $ z \in \core(i) $.  

However, $ y = z + t(\tilde{x} -z) $ for $ t = \alpha + \beta - 1 $.  Hence, applying Theorem~\ref{t.j.a}  again, now with $ z $ (resp., $ \tilde{x} $) in place of $ e $ (resp., $ x $), we find $ y \in \core(i) $.  \end{proof}    

Before turning to the proof of Theorem~\ref{t.j.a}, we recall the conjecture made in \S\ref{s.b}.
\begin{conj}
$ \core(i) $ is convex. 
\end{conj}
\noindent It is highly unclear whether resolving the conjecture positively could have any relevance to algorithms.  Nonetheless, a positive resolution would be interesting from a structural perspective.  

Now we begin the process of proving Theorem~\ref{t.j.a}.  For motivation, we first consider a reasonably-general case for which the proof is rather straightforward, and afterwards move to the proof for the truly general case.  That the proof is straightforward in one setting but not in full generality is reminiscent of the proof of Part I of the Main Theorem, where the case of ``$ T = \infty $'' was vastly easier than the case ``$ T < \infty $'' (i.e., \S\ref{s.h.a}  was a breeze compared to \S\ref{s.h.b}).

The straightforward case relies on the following lemma.
\begin{lemma}  \label{t.j.c}
If $ 0 \leq i \leq n-2 $ and $ x \in \partial \Lambdaiep{i}{e} $, then
\[ D\ppie{i}{e}(x) = 0 \quad \Leftrightarrow \quad x \in \partial \Lambdaiep{i+1}{e}  \; . \]
\end{lemma}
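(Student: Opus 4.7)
The plan is to handle the two implications quite differently. For the forward direction, I use that $\ppie{i+1}{e}$ is by definition the derivative of $\ppie{i}{e}$ in direction $e$, so $D\ppie{i}{e}(x)[e] = \ppie{i+1}{e}(x)$. Thus $D\ppie{i}{e}(x) = 0$ forces $\ppie{i+1}{e}(x) = 0$, and combined with the nesting $x \in \Lambdaiep{i}{e} \subseteq \Lambdaiep{i+1}{e}$ this places $x$ on $\partial \Lambdaiep{i+1}{e}$.

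For the reverse direction, I first apply Theorem~\ref{t.d.a}(A) with $i$ replaced by $i+1$: the identity $\Lambdaiep{i}{e} \cap \partial \Lambdaiep{i+1}{e} = \Lambdap \cap \partial \Lambdaiep{i+1}{e}$ yields $x \in \Lambdap$. The core step is now to show $\ppie{j}{e}(x) = 0$ for every $j = 0, 1, \ldots, i+1$. Let $\phi(s) := p(x + se)$; since $x \in \Lambdap$, all roots of $\phi$ are real and $\leq 0$, and by iterated Rolle interlacing the same is true for every derivative $\phi^{(k)}$. Now $\phi^{(i+1)}(0) = \ppie{i+1}{e}(x) = 0$ exhibits $0$ as the largest root of $\phi^{(i+1)}$, and interlacing with $\phi^{(i)}$ then forces the largest root of $\phi^{(i)}$ to also equal $0$ (it is sandwiched between two roots of $\phi^{(i+1)}$, both $\leq 0$, and is itself $\leq 0$), so $\phi^{(i)}(0) = 0$. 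Iterating this descent gives $\phi^{(j)}(0) = \ppie{j}{e}(x) = 0$ for $j = 0, 1, \ldots, i+1$.

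Next I invoke Theorem~\ref{t.d.a}(B): since $x \in \Lambdap \cap \partial \Lambdaiep{j}{e}$ for each such $j$ and this intersection is independent of $e \in \Lambdapp$, we obtain $\ppie{j}{u}(x) = D^j p(x)[u, \ldots, u] = 0$ for every $u \in \Lambdapp$ and $j \leq i+1$. In particular the polynomial $u \mapsto D^{i+1}p(x)[u, \ldots, u]$ vanishes on the nonempty open set $\Lambdapp$, hence vanishes identically; by polarization the symmetric $(i+1)$-multilinear form $D^{i+1}p(x)$ is itself identically zero, and therefore $D\ppie{i}{e}(x)[v] = D^{i+1}p(x)[\underbrace{e, \ldots, e}_{i}, v] = 0$ for every $v$, giving $D\ppie{i}{e}(x) = 0$.

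The main obstacle I foresee is the interlacing descent from $\ppie{i+1}{e}(x) = 0$ down to the vanishing of all lower $\ppie{j}{e}(x)$: it hinges decisively on the non-positivity of roots, which is why the reduction to $x \in \Lambdap$ via Theorem~\ref{t.d.a}(A) has to come first. After that, Theorem~\ref{t.d.a}(B) is the decisive tool for converting a single-direction vanishing into a direction-independent one, and polarization finishes the argument cleanly.
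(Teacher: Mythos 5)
Your proof is sound, and it is genuinely different from the paper's, which offers no argument at all: the paper disposes of this lemma by citing Lemma~7 of \cite{renegar}. Your forward direction is exactly right ($D\ppie{i}{e}(x)[e]=\ppie{i+1}{e}(x)$, then (\ref{e.c.c}) to conclude $x\notin\Lambdaiepp{i+1}{e}$). For the reverse direction, the skeleton --- Theorem~\ref{t.d.a}(A) to place $x$ in $\Lambdap$, Theorem~\ref{t.d.a}(B) to make the vanishing direction-independent, then polarization of the symmetric form $D^{i+1}p(x)$ --- is a clean and correct self-contained route. Two remarks. First, the entire interlacing descent is superfluous: the only fact you feed into Theorem~\ref{t.d.a}(B) is $x\in\Lambdap\cap\partial\Lambdaiep{i+1}{e}$, and both memberships are already in hand before the descent ($x\in\partial\Lambdaiep{i+1}{e}$ is the hypothesis, $x\in\Lambdap$ comes from part (A)); from there $\ppie{i+1}{u}(x)=0$ for all $u\in\Lambdapp$ follows from (\ref{e.c.a}) and (\ref{e.c.c}) with no Rolle argument. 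Second, the parenthetical justifying the descent has the interlacing backwards: the largest root of $\phi^{(i)}$ is not sandwiched between roots of $\phi^{(i+1)}$; rather, the largest root of the derivative $\phi^{(i+1)}$ lies between the two largest roots of $\phi^{(i)}$, so from $\max\mathrm{root}(\phi^{(i+1)})=0$ and $\max\mathrm{root}(\phi^{(i)})\le 0$ one gets $0\le\max\mathrm{root}(\phi^{(i)})\le 0$. The conclusion you draw is correct, but the stated reason is not; since the step is redundant anyway, the cleanest fix is to delete it.
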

\begin{proof} This is Lemma 7 in \cite{renegar}. \end{proof} 

Here is the straightforward case:
\begin{thm}  \label{t.j.d}
Assume $ 1 \leq i \leq n-2 $ and $ e \in \core(i) $.  If $ x \in \opt \cap \Lambdaiepp{i+1}{e} $ then
\[    \{ e + t(x-e) \in \Lambdapp : t \in \mathbb{R} \} \subseteq \core(i) \; . \]
\end{thm}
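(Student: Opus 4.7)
The plan is to show that for every $t$ with $e(t) := (1-t)e + tx \in \Lambdapp$, the given point $x$ lies in $\optie{i}{e(t)}$; once that is established, Corollary~\ref{t.d.b}(B) dichotomizes $\optie{i}{e(t)}$ into either all of $\opt$ or a single point outside $\feas$, and since $x \in \opt \subseteq \feas$ only the first alternative is possible, giving $e(t) \in \core(i)$.

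First I would harvest the derivative structure at $(e,x)$. The nesting $\core(i) \subseteq \core(j)$ for $j \leq i$ gives $x \in \optie{j}{e} = \opt$ for $j = 0,\ldots,i$; combined with the standing hypothesis that $c^*$ is not in the image of $A^*$, this forces $x \in \partial\Lambdaiep{j}{e}$ for each such $j$, and together with the hypothesis $x \in \Lambdaiepp{i+1}{e}$ a multiplicity analysis of the roots of $s \mapsto p(x+se)$ yields $\ppie{j}{e}(x) = 0$ for $j = 0,\ldots,i$ while $\ppie{i+1}{e}(x) > 0$. Iterating Lemma~\ref{t.j.c} then produces $D\ppie{j}{e}(x) = 0$ for $j < i$ and $D\ppie{i}{e}(x) \neq 0$, so that KKT for $\hpie{i}{e}$ at $x$ reads $c^* = \mu\,D\ppie{i}{e}(x) + A^*y$ with $\mu > 0$.

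Next I would analyze the univariate polynomial $\phi_t(s) := p(x + se(t))$ via the rewriting $x+se(t) = (1+st)x + s(1-t)e$ and the homogeneity identity $p(\tau e + \sigma x) = p(e)\prod_j(\tau - \sigma r_j)$ (where the $r_j$ are the roots of $\phi_0$). Since exactly $i+1$ of the $r_j$ vanish (from the multiplicity step above) and the rest are strictly negative, this gives
\[
 \phi_t(s) \;=\; p(e)(1-t)^{i+1}\,s^{i+1}\,Q_t(s), \qquad Q_t(0)>0.
\]
Reading off Taylor coefficients at $s=0$ yields $\ppie{j}{e(t)}(x)=0$ for $j \leq i$ and $\ppie{i+1}{e(t)}(x) = (1-t)^{i+1}\ppie{i+1}{e}(x)$. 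A convex-combination argument then shows that $e(t_0) \in \Lambdap$ for some $t_0>1$ would, by taking the convex combination of $e(t_0) \in \Lambdap$ and $e(0) = e \in \Lambdapp$ with weight $s = (t_0-1)/t_0$, produce $e(1) = x \in \Lambdapp$, contradicting $x \in \partial\Lambdap$. Hence $\{t : e(t) \in \Lambdapp\} \subseteq (-\infty, 1)$, so $(1-t)^{i+1}>0$ and therefore $x \in \partial\Lambdaiep{i}{e(t)} \cap \Lambdaiepp{i+1}{e(t)}$.

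The crux, and the step I expect to be the main technical hurdle before it collapses, is transferring the KKT identity from $e$ to $e(t)$. For this, I would expand $e(t) = (1-t)e + tx$ multilinearly in $D\ppie{i}{e(t)}(x)[w] = D^{i+1}p(x)[e(t),\ldots,e(t),w]$, group by the number $\ell$ of copies of $x$ produced, and apply the homogeneity identity (\ref{e.c.f}) to collapse the mixed derivatives, obtaining
\[
 D\ppie{i}{e(t)}(x) \;=\; \sum_{\ell=0}^{i} \binom{i}{\ell}(1-t)^{i-\ell}t^{\ell}\frac{(n-i+\ell-1)!}{(n-i-1)!}\,D\ppie{i-\ell}{e}(x).
\]
Every term with $\ell \geq 1$ carries a factor $D\ppie{j}{e}(x)$ for some $j < i$, which vanishes by the previous step; the lone $\ell = 0$ term leaves the clean identity $D\ppie{i}{e(t)}(x) = (1-t)^i D\ppie{i}{e}(x)$. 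Substituting into the KKT identity at $e$ yields $c^* = \bigl[\mu/(1-t)^i\bigr]\,D\ppie{i}{e(t)}(x) + A^*y$, and since $t < 1$ the new multiplier $\mu/(1-t)^i$ is strictly positive, so $x$ satisfies the KKT conditions for the convex problem $\hpie{i}{e(t)}$ and is therefore optimal. The reduction of the opening paragraph then gives $e(t) \in \core(i)$.
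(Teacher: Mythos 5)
Your proposal is correct and follows essentially the same route as the paper's proof: establish $\ppie{j}{e}(x)=0$ for $j\le i$ and $D\ppie{j}{e}(x)=0$ for $j<i$ via Lemma~\ref{t.j.c}, expand the derivative direction multilinearly to get $D\ppie{i}{e(t)}(x)=(1-t)^iD\ppie{i}{e}(x)$ and $\ppie{i+1}{e(t)}(x)=(1-t)^{i+1}\ppie{i+1}{e}(x)$, and transfer the first-order condition with a rescaled positive multiplier before invoking Corollary~\ref{t.d.b}(B). The only cosmetic differences are that the paper obtains the $(1-t)^{i+1}$ scaling by the same multilinear expansion rather than your univariate factorization of $s\mapsto p(x+se(t))$, and that your step ``KKT at $e(t)$ therefore optimal'' is exactly what Corollary~\ref{t.g.d} (the characterization (\ref{e.j.a})) supplies --- worth citing explicitly, since $\ppie{i}{e(t)}$ itself is not concave and bare first-order sufficiency is not automatic.
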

\begin{proof} Recall Corollary~\ref{t.g.d}, which shows that for every $ \bar{e} \in \Lambdapp $ and $ 0 \leq i \leq n-2 $, the points lying in $ \optie{i}{ \bar{e}} \setminus \partial \Lambdaiep{i+1}{\bar{e}} $ are precisely the optimal solutions to the linearly constrained, convex optimization problem
\[   \begin{array}{rl}
    \min_{x \in  \Lambdaiepp{i+1}{\bar{e}}}  & - \ln c^*( \bar{e} - x) - \frac{\ppie{i}{\bar{e} }(x)}{\ppie{i+1}{\bar{e}}(x)} \\
    \textrm{s.t.} & Ax = b \; , \end{array} \]
that is, for every $ \bar{e} \in \Lambdapp $, letting $ \qie{i}{ \bar{e}} := \ppie{i}{ \bar{e}}/\ppie{i+1}{ \bar{e}} $,   
\begin{equation}  \label{e.j.a}
\left. \begin{array}{c}
  x \in \optie{i}{\bar{e} } \cap \Lambdaiepp{i+1}{\bar{e}}  \\ \Leftrightarrow \\ 
  \smfrac{1}{c^*( \bar{e}-x) } \, c^* - D \qie{i}{ \bar{e}}(x) = y^* A \quad \textrm{for some $ y^* $} \\
 Ax = b \\
  x \in \Lambdaiepp{i+1}{ \bar{e}} \; .       
\end{array} \quad \right\} \end{equation}

Now assume $ x \in \opt \cap \Lambdaiepp{i+1}{e} $, where $ e \in \core(i) $ and $ 0 \leq i \leq n-2 $.  Then, since $ \Lambdap \cap \partial \Lambdaiep{j}{ \bar{e}} $ ($ 0 \leq j \leq n-1 $) is independent of $ \bar{e}  \in \Lambdapp $ (by Theorem~\ref{t.d.a}(B)), we have for all $ \bar{e}   \in \Lambdapp $ that
\begin{gather} 
  x \in \Lambdaiepp{i+1}{ \bar{e}} \; ,   \label{e.j.b} \\
   \ppie{j}{ \bar{e}}(x) = 0 \quad \textrm{for all $ 0 \leq j \leq i $} \; , \label{e.j.c}
\end{gather}
and, using Lemma~\ref{t.j.c},
\begin{equation}  \label{e.j.d}
     D \ppie{j}{ \bar{e}}(x) = 0 \quad \textrm{for all $ 0 \leq j \leq i-1 $}   \; . 
\end{equation} 
Moreover, by (\ref{e.j.a}), there exists $ y^* $ satisfying
\[ 
\smfrac{1}{c^*( e-x) } \, c^* - D \qie{i}{ e}(x) = y^* A \; , 
\]
that is, as $ \ppie{i}{e}(x) = 0 $, there exists $ y^* $ satisfying
\begin{equation}  \label{e.j.e}
\smfrac{1}{c^*( e-x) } \, c^* - \smfrac{1}{\ppie{i+1}{e}(x)} D \ppie{i}{ e}(x) = y^* A \; .
\end{equation}

Fix $ t $ for which $  e + t(x-e)  \in \Lambdapp $ (thus, $ - \infty < t < 1 $).  To prove the theorem, it suffices to show $ x \in \optie{i}{e + t(x-e)} $, because then, $ \opt \cap \optie{i}{e+t(x-e)} \neq \emptyset $, and hence, $ \opt = \optie{i}{e+t(x-e)} $ (by Corollary~\ref{t.d.b}(B)).  Thus, in light of (\ref{e.j.b}), we see from (\ref{e.j.a})  that to prove the theorem, it suffices to show
\[ 
   \smfrac{1}{(1-t) \, c^*(e -x) } \, c^* - D \qie{i}{ e + t(x-e) }(x) = w^* A \quad \textrm{for some $ w^* $} \; , 
\] 
that is, as $ \ppie{i}{ e + t(x-e)}(x) = 0 $ (by (\ref{e.j.c})), it suffices to show
\begin{equation}  \label{e.j.f}
   \smfrac{1}{(1-t) \, c^*(e -x) } \, c^* - \smfrac{1}{\ppie{i+1}{ e + t(x-e) }(x)} D \ppie{i}{ e + t(x-e) }(x) = w^* A \quad \textrm{for some $ w^* $} \; .  
\end{equation}
However,    
  \begin{align*}
     D \ppie{i}{e+t(x-e)}(x) & = D^{i+1}p(x)[\underbrace{(1-t)e + tx, \ldots, (1-t)e + tx}_{\textrm{$ i $ times}}] \\ & = \sum_{j = 0}^i \binom{i}{ j }(1-t)^{ j} t^{i - j} D^{i+1}p(x)[\underbrace{e, \ldots, e}_{\textrm{$ j $ times}}, \underbrace{x, \ldots, x}_{\textrm{$ i - j $ times}}] \\
        & = \sum_{j = 0}^i \binom{i}{ j }(1-t)^{ j} t^{i - j} D^{i+1-j }\ppie{j}{e}(x)[\underbrace{x, \ldots, x}_{\textrm{$ i - j $ times}}] \\
         & = \sum_{j = 0}^i \binom{i}{ j }(1-t)^{ j} t^{i-j} \, \frac{(n-j-1)!}{(n-i-1)!} D \ppie{j}{e}(x) \quad \textrm{(by (\ref{e.c.f}))} \; \\
    & =  (1-t)^i D \ppie{i}{e}(x) \quad \textrm{(by (\ref{e.j.d}))} \; ,  
\end{align*}
and, similarly,
\begin{align*}
  \ppie{i+1}{e+t(x-e)}(x) & =  \sum_{j = 0}^{i+1} \binom{i+1}{ j }(1-t)^{ j} t^{i+1-j} \, \frac{(n-j)!}{(n-i-1)!} \ppie{j}{e}(x) \\
  & =  (1-t)^{i+1} \ppie{i+1}{e}(x) \quad \textrm{(by (\ref{e.j.c}))} \; . 
\end{align*}
Hence, $ w^* = \smfrac{1}{1-t} \, y^* $ satisfies (\ref{e.j.f})  (where $ y^* $ is as in (\ref{e.j.e})), completing the proof. \end{proof} 

In the course of the proof for the ``truly general'' case, there is a crucial outside result to which we refer, and thus before formally beginning the proof, we explain the result.
 
For $ e \in \Lambdapp $ and an arbitrary point $ x $, let $ M_e(x) $ denote the number of non-negative roots of $ t \mapsto p(x+te) $.  The result is that $ M_e(x) $ is independent of $ e \in \Lambdapp $, as was stated in \cite{hl} (as Theorem 2.12). (In \cite{renegar}, the fact was (essentially) established {\em only}  for $ x \in \Lambdap $.) 

Perhaps worth recording is that the independence easily follows from a most useful tool in the hyperbolic polynomial literature: 
\hypertarget{hv_thm}{}
\begin{rem}  Assume $ e \in \Lambdapp $.  For any points $ x $ and $ z $, there exist $ n \times n $ symmetric matrices $ X $ and $ Z $ such that
\[  (r,s,t) \mapsto p(rx + sz + te) = p(e) \, \det( rX + sZ + tI) \; . \]
\end{rem}
\noindent This formulation of the theorem comes from \cite{lpr}, where it was obtained by straightforward homogenization of the original result in \cite{hv}.  (The initial importance of the homogeneous version was that it affirmatively settled the ``Lax Conjecture'' -- see \cite{lpr} for discussion.)  (See \cite{branden} for negative results on possibilities of extensions to more than three variables $ r $, $ s $, $ t $.) 

\addtocounter{thm}{1}

\begin{cor}  \label{t.j.f}
For $ e \in \Lambdapp $ and arbitrary $ x $, let $ M_e(x) $ denote the number of non-negative roots for the polynomial $ t \mapsto p(x+te) $. The value $ M_e(x) $ is the same for all $ e \in \Lambdapp $.
\end{cor}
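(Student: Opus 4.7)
The plan is to reduce the corollary to a congruence statement between two symmetric matrices and then close the argument with Sylvester's law of inertia. Fix $e_0, e_1 \in \Lambdapp$. Applying the Helton--Vinnikov theorem with hyperbolicity direction $e_0$ and auxiliary points $x$ and $z := e_1$, I obtain symmetric $n \times n$ matrices $X, Z$ satisfying
\[
p(rx + s e_1 + t e_0) \;=\; p(e_0)\,\det(rX + sZ + tI) \qquad \textrm{for all } (r,s,t) \in \reals^3.
\]

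Specializing $(r,s) = (1,0)$ turns the right-hand side into $p(e_0)\det(X + tI)$, whose $t$-roots are exactly the negatives of the eigenvalues of $X$ (with multiplicity); hence $M_{e_0}(x)$ equals the number of non-positive eigenvalues of $X$. Specializing $(r,t) = (1,0)$ gives $p(x + s e_1) = p(e_0)\det(X + sZ)$. Assuming for a moment that $Z$ is positive definite, I factor $\det(X + sZ) = \det(Z)\det(Z^{-1/2} X Z^{-1/2} + sI)$ and read off that $M_{e_1}(x)$ equals the number of non-positive eigenvalues of $Z^{-1/2}XZ^{-1/2}$. Since $X$ and $Z^{-1/2}XZ^{-1/2}$ are congruent via the invertible symmetric matrix $Z^{-1/2}$, Sylvester's law of inertia yields $M_{e_0}(x) = M_{e_1}(x)$.

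The main obstacle is the one step I set aside: showing $Z$ is positive definite. Here I would exploit convexity of $\Lambdapp$, which gives $s e_1 + t e_0 \in \Lambdapp$ for all $s,t > 0$, so $p(se_1 + te_0) > 0$ in that region. Specializing $r = 0$ in the Helton--Vinnikov identity produces $p(s e_1 + t e_0) = p(e_0)\det(sZ + tI)$; for any fixed $s_0 > 0$ this is a degree-$n$ real-rooted polynomial in $t$ (hyperbolic in direction $e_0$) that is strictly positive on $[0, \infty)$, so all of its $n$ roots lie in $(-\infty, 0)$. These roots are precisely $-s_0 \mu$ as $\mu$ ranges over the eigenvalues of $Z$, forcing every eigenvalue of $Z$ to be strictly positive. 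With positive definiteness of $Z$ established, the signature computation above concludes the proof, and multiplicities are preserved throughout because every identity used is a genuine polynomial equality.
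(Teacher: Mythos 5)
Your proof is correct, and it shares the paper's central tool -- the Helton--Vinnikov determinantal representation together with the congruence $X \mapsto Z^{-1/2} X Z^{-1/2}$ -- but it closes the argument by a genuinely different route. The paper first reduces the corollary to showing that the \emph{multiplicity of $0$} as a root of $t \mapsto p(x+te)$ is independent of $e$ (invoking continuity of the roots in $e$ over the connected set $\Lambdapp$, so that the count of strictly positive roots cannot change without a root passing through $0$), and then needs only \emph{rank} invariance under congruence. You instead identify the full count $M_{e_0}(x)$ with the number of non-positive eigenvalues of $X$ and $M_{e_1}(x)$ with the number of non-positive eigenvalues of $Z^{-1/2}XZ^{-1/2}$, and conclude in one step from Sylvester's law of inertia, which preserves the entire inertia and not merely the rank. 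Your version thereby avoids the root-continuity reduction altogether, and it also supplies a detail the paper leaves implicit: the positive definiteness of $Z$, which is needed even in the paper's proof to make sense of $Z^{-1/2}$, and which you establish correctly by specializing $r=0$ and observing that $p(s_0 e_1 + t e_0) > 0$ for all $t \geq 0$ forces every eigenvalue of $Z$ to be strictly positive. The trade-off is that the paper's argument uses less linear algebra (rank invariance is weaker than inertia invariance) at the cost of an analytic perturbation step, while yours is purely algebraic once the Helton--Vinnikov representation is in hand.
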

\begin{proof}  
First observe that because all roots of univariate polynomials vary continuously in the coefficients so long as the leading coefficient does not vanish, it suffices to show $ \mult_e(x) $ is independent of $ e \in \Lambdapp $, where $ \mult_e(x) $ is the multiplicity of 0 as a root of $ t \mapsto p(x+te) $.  However, for $ e, z  \in \Lambdapp $, using the Helton-Vinnikov Theorem,
\[  \mult_e(x) = n - \mathrm{rank}(X) = n - \mathrm{rank}(Z^{-1/2}XZ^{-1/2}) = \mult_{z}(x) \; , \]
and thus the corollary is established. \end{proof}

\noindent 
{\bf  {\em Proof of Theorem~\ref{t.j.a}.}}  Fix $ e \in \core(i) $, where $ 1 \leq i \leq n-2 $.  
 Let $ x $ denote a point in $  \opt $ ($ = \optie{i}{e} $).

Let $ e(t) :=  e + t(x-e) $, and let $ I  $ denote the open interval consisting of all $ t \in \mathbb{R} $ for which $ e(t) \in \Lambdapp $ -- thus, $ I \subseteq (- \infty, 1) $.  

Since $ \Lambdap \cap \partial \Lambdaiep{i}{\bar{e}} $ is independent of $ \bar{e} \in \Lambdapp $ (by Corollary~\ref{t.d.b}(A)), and since    $ \optie{i}{e} = \opt $, we have
\begin{equation}  \label{e.j.g}
\opt \subset \partial \Lambdaiep{i}{e(t)} \textrm{ for all $ t \in I $} \; . 
\end{equation}

Let $ {\mathcal D}  := \{ d: Ad = 0 \textrm{ and } c^*d = 0 \} $, and let $ x + {\mathcal D}  $ be the set consisting of all points $ x + d $ where $ d \in {\mathcal D}  $. 

Since $ \relint(\feasie{i}{e(t)}) = (x + \{d: Ad = 0 \}) \cap \Lambdaiepp{i}{e(t)} $ and $ x \in \opt \subset \feasie{i}{e(t)} $, we have 
\[  x \in \optie{i}{e(t)} \quad \Leftrightarrow \quad  (x + {\mathcal D} ) \cap \Lambdaiepp{i}{e(t)} = \emptyset \; . \]
On the other hand, $ x $ is in $ \optie{i}{e(t)} $, as well as in $ \opt $, if and only if $ e(t) \in \core(i) $ (by Corollary~\ref{t.d.b}(B)).  Consequently,
\[ 
e(t) \in \core(i) \quad \Leftrightarrow \quad  (x + {\mathcal D} ) \cap \Lambdaiepp{i}{e(t)} = \emptyset \; . \]
Thus, our goal -- proving $ e(t) \in \core(i) $ for all $ t \in I $ -- will be accomplished if we prove $ (x + {\mathcal D} ) \cap \Lambdaiepp{i}{e(t)} = \emptyset $ for all $ t \in I $. Hence, fixing arbitrary $ d \in {\mathcal D} $, letting $ \ray(d) := \{ sd: s \geq 0 \} $ and
\[  I(d) := \{ t \in I:  (x + \ray(d)) \cap \Lambdaiepp{i}{e(t)} =  \emptyset \} \; , \]
our goal is to show $ I(d) = I $.

Recall, however, the characterization (\ref{e.c.b}) now applied to $ \Lambdaiepp{i}{e(t)} $:
\begin{equation}  \label{e.j.h}
\Lambdaiepp{i}{e(t)} := \{ x: \ppie{j}{e(t)}(x) > 0 \textrm{ for all $ j = i,  \ldots, n-1 $} \} \; . 
\end{equation}  
Clearly, then, $ I \setminus I(d) $ is an open subset of the open interval $ I $.  Hence, since $ I(d) \neq \emptyset $ (indeed, $ 0 \in I(d) $), to complete the proof of the theorem, it suffices to show that $   I(d) $ is open.
As $ e $ is only assumed to be an  element of $ \core(i) $ -- in particular, $ e $ could be replaced with any $ e(t) $ that happens to be in $  \core(i) $ -- our goal has become: 
\begin{equation}  \label{e.j.i}
\textrm{Show $ I(d) $ contains an open interval including $ 0 $.} 
 \end{equation}
  
For non-negative integers $ j $ and $ k $, define
\[           \alpha_{jk}(t) := D^k \ppie{j}{e(t)}(x)[\underbrace{d, \ldots, d}_{\textrm{$ k $ times}}]   \]
and
\[  \beta_j(t) := \begin{cases} 0 & \textrm{if $ \alpha_{jk}(t) = 0 $ for all $ k $ ;} \\
\alpha_{j,k(j,t)} & \textrm{otherwise, where $   k(j,t) := \min \{ k: \alpha_{jk}(t) \neq 0 \} $  .} \end{cases} \]

Note that if $ \beta_j(t) = 0 $, then the polynomial $ s \mapsto \ppie{j}{e(t)}(x+sd) $ is identically zero, whereas if $  \beta_j(t) \neq 0 $, the polynomial evaluated at small, positive $ s $ has the same sign as $ \beta_j(t) $.  Thus, since $ x $ lies in the convex set $ \Lambdaiep{i}{e(t)} $, we have by the characterization (\ref{e.j.h})  that
\[  (x + \ray(d)) \cap \Lambdaiepp{i}{e(t)} \neq \emptyset \quad \Leftrightarrow \quad \beta_j(t) > 0 \textrm{ for all $ j = i, \ldots, n-1 $} \; . \]
Our goal  (\ref{e.j.i})  has now been reduced to:
\begin{equation}  \label{e.j.j}
 \textrm{Show }    \exists \epsilon > 0 \textrm{ such that } |t| < \epsilon \, \Rightarrow \, \beta_j(t) \leq 0 \textrm{ for some $ j \in \{ i, \ldots, n-1 \}  $} \; . 
\end{equation}
For this we consider two cases.

First, assume $ \beta_j(0) \geq  0 $ for all $ j = i, \ldots, n-1 $.  Then $ x + sd \in \feasie{i}{e} $ for all sufficiently small, positive $ s $.  Thus, since $ x \in \optie{i}{e} = \opt $ and $ c^*d = 0 $, we have $ x + sd \in \opt  $ for all sufficiently small, positive $ s $.  Hence, by (\ref{e.j.g}) , for each $ t \in  I $, the univariate polynomial $ s \mapsto \ppie{i}{e(t)}(x+se) $ has value zero on an open interval, and thus is identically zero, so $  \beta_i(t) =  0 $ for all $ t \in I $, (more than) accomplishing (\ref{e.j.j}) for the first case.

Now consider the remaining case, that is, assume $ \beta_j(0) < 0 $ for some $ j   \in \{ i, \ldots, n-1 \} $; fix such a $ j $.  To accomplish (\ref{e.j.j}) , of course it suffices to show for this fixed value of $ j $ that $ \beta_{j}(t) < 0 $ for all $ t $ in an open interval containing $ 0 $. In turn, by definition of $ \beta_j(t) $, it suffices to: 
\begin{equation}  \label{e.j.k}
  \textrm{Show }  \exists \epsilon>0 \textrm{ such that } |t| < \epsilon \quad  \Rightarrow  \quad 
D^{k(j,0)} \ppie{j}{e(t)}(x)[\underbrace{d, \ldots, d}_{\textrm{$ k(j,0) $ times}} ] < 0
\end{equation}
and
\begin{equation}  \label{e.j.l}
 \textrm{show }  (t \in I) \wedge (k < k(j,0)) \quad \Rightarrow \quad D^{k} \ppie{j}{e(t)}(x)[\underbrace{d, \ldots, d}_{\textrm{$ k $ times}} ] = 0 \; . 
\end{equation}
 
The existence of $ \epsilon $ as in (\ref{e.j.k})  is simply a matter of continuity and $ \beta_j(0) < 0 $.
For accomplishing (\ref{e.j.l}) , observe 
\begin{align*}
     D^k \ppie{j}{e(t)}(x) & = D^{k+j}p(x)[\underbrace{(1-t)e + tx, \ldots, (1-t)e + tx}_{\textrm{$ j $ times}}] \\
        & = \sum_{\ell = 0}^j \binom{j}{ \ell }(1-t)^{ \ell} t^{j - \ell} D^{k+j- \ell }\ppie{\ell}{e}(x)[\underbrace{x, \ldots, x}_{\textrm{$ \ell - j $ times}}] \\
         & = \sum_{\ell = 0}^j \binom{j}{ \ell }(1-t)^{ \ell} t^{j - \ell} \, \frac{(n-k-\ell)!}{(n-k-j)!} D^k \ppie{\ell}{e}(x) \quad \textrm{(using (\ref{e.c.f}))} \; . 
\end{align*}
Consequently, (\ref{e.j.l})  is immediately accomplished by the following proposition, thus concluding the proof of the theorem (except for proving the proposition).  \hfill $ \Box $ 

\begin{prop}  \label{t.j.g}
Assume $ e \in \Lambdapp $, $ x \in \Lambdap $ and let $ d $ be a vector.  Assume non-negative integers $ j $ and $ k $ satisfy $ D^k \ppie{j}{e}(x) [\underbrace{d, \ldots, d}_{k \textrm{ times}}] \neq 0 $;  let $ k(j) $ be the smallest such $ k $ for $ j $ and assume $ k(j) > 0 $.  

Then
\[  D^{k } \ppie{  \ell }{e}(x)[\underbrace{d, \ldots, d}_{ k  \textrm{ times}}] = 0 \quad \textrm{for all $ \ell  = 0, \ldots, j $ and $ k = 0,\ldots, k(j)-1 $ } \; . \]
\end{prop}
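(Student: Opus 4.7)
My approach recasts the proposition in terms of the bivariate polynomial $P(s,t) := p(x+sd+te)$. By symmetry of higher differentials,
\[
[s^k t^\ell]\,P(s,t) \;=\; \tfrac{1}{k!\,\ell!}\,D^{k+\ell}p(x)[d^k,e^\ell] \;=\; \tfrac{1}{k!\,\ell!}\,D^k\ppie{\ell}{e}(x)[d,\ldots,d] \;=:\; c_{k,\ell},
\]
so the task becomes showing $c_{k,\ell}=0$ whenever $k<k(j)$ and $\ell\leq j$. The Helton--Vinnikov Theorem supplies $n\times n$ real symmetric matrices $X,D$ with $P(s,t)=p(e)\det(X+tI_n+sD)$. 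Set $r:=\mathrm{mult}_e(x)=\dim\ker X$. Since $k(j)>0$ forces $\ppie{j}{e}(x)=0$, together with the identity $\ppie{\ell}{e}(x)=\ell!\,p(e)\,e_{n-\ell}(\mathrm{eig}(X))$ we immediately obtain $j<r$.

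Next, after diagonalizing $X$ so that $\lambda_1=\cdots=\lambda_r=0$ and $\lambda_{r+1},\ldots,\lambda_n\neq 0$, multilinear expansion of the determinant by rows (assigning to each row either $tI$, $X$, or $sD$) yields the closed form
\[
c_{k,\ell} \;=\; \sum_{\substack{C\subseteq\{r+1,\ldots,n\}\\ |C|=n-k-\ell}} \Bigl(\prod_{i\in C}\lambda_i\Bigr)\, e_k\bigl(D[C^c,C^c]\bigr),
\]
where $D[C^c,C^c]$ is the principal submatrix on $C^c:=[n]\setminus C$ and $e_k$ is the sum of $k\times k$ principal minors. From this, the ``triangle'' vanishing $c_{k,\ell}=0$ for $k+\ell<r$ is immediate (no admissible $C$ exists). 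A parallel conceptual derivation of the triangle vanishing exploits Corollary~\ref{t.j.f}: setting $e_\lambda:=e+\lambda d$ gives $\ppie{\ell}{e_\lambda}(x)=\ell!\sum_k \lambda^k c_{k,\ell-k}$, and the equality $\mathrm{mult}_{e_\lambda}(x)=r$ forces the left side to vanish identically in $\lambda$ when $\ell<r$; reading off coefficients yields $c_{k,\ell}=0$ for $k+\ell<r$.

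Combining the triangle vanishing with the hypothesis $c_{k(j),j}\neq 0$ gives $k(j)\geq r-j$. In the tame case $k(j)=r-j$, the target rectangle $\{(k,\ell):k<k(j),\ \ell\leq j\}$ lies entirely in the region $k+\ell<r$, so the triangle vanishing already concludes. The substantive case is $k(j)>r-j$, where the rectangle extends above the line $k+\ell=r$. Here the hypothesis $c_{k,j}=0$ for $r-j\leq k<k(j)$ reads as a sequence of linear relations among principal minors of $D$ (weighted by products of $\lambda_i$); in particular, at $k=r-j$ it forces $e_{r-j}(D_{11})=0$, where $D_{11}$ is the top-left $r\times r$ block. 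I would then argue that these relations, together with the symmetry $D=D^T$ and the Schur-complement observation that $\det(tI+X+sD)=\det(\Lambda+tI+sD_{22})\cdot\det(tI_r+sA(s,t))$ with $A(s,t)-D_{11}$ carrying $s$-degree $\geq 1$ term by term, propagate the vanishing from row $\ell=j$ into all lower rows $\ell<j$ inside the target rectangle.

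The main obstacle is precisely this propagation in the case $k(j)>r-j$. A naive count shows that the hypotheses on row $\ell=j$ alone give $k(j)-(r-j)$ linear relations among minors, while the rectangle contains a quadratic number of target coefficients, so mere linear algebra on the formula above will not suffice. The resolution should exploit an algebraic identity linking Taylor coefficients across rows---most naturally via the recursion $\ppie{\ell+1}{e}(y)=D\ppie{\ell}{e}(y)[e]$, which translates row-$(\ell+1)$ information into differential data for row $\ell$---combined with Rellich-type real-analyticity of the eigenvalue branches of $X+sD$ ensuring that the higher-order corrections to the leading form $\det(tI_r+sD_{11})$ respect the symmetry forced by hyperbolicity. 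Making this propagation rigorous is where I expect the proof to require genuine work.
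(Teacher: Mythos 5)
Your reduction to the bivariate polynomial $P(s,t)=p(x+sd+te)$ and the Helton--Vinnikov determinantal representation is sound as far as it goes: the identification of the coefficients $c_{k,\ell}$, the triangle vanishing $c_{k,\ell}=0$ for $k+\ell<r$ (with $r=\mult_e(x)$), the deduction that $j<r$, and the disposal of the case $k(j)=r-j$ are all correct. But the proposition is not proved, and you say so yourself: when $k(j)>r-j$ the target rectangle $\{(k,\ell):k<k(j),\ \ell\le j\}$ leaves the triangle, and everything you offer for that case -- linear relations among principal minors of $D$, a Schur-complement factorization, Rellich analyticity of eigenvalue branches -- is a list of ingredients, not an argument. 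That case is genuinely nonempty (e.g.\ $X=\mathrm{diag}(0,1)$, $D$ the off-diagonal permutation matrix, gives $P=t+t^2-s^2$, so $r=1$, $j=0$, $k(0)=2>r-j$), and it is exactly where the content of the proposition lives; the ``propagation'' you defer is the whole theorem there. So the proposal has a genuine gap.

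For comparison, the paper's proof sidesteps the coefficient bookkeeping entirely by restating the claim as monotonicity of vanishing orders: with $\nie{\ell}{e}(x,d,0)$ the multiplicity of $s=0$ as a root of $s\mapsto\ppie{\ell}{e}(x+sd)$, the proposition is precisely $\nie{\ell}{e}(x,d,0)\ge\nie{j}{e}(x,d,0)=k(j)$ for $\ell\le j$. For $x$ in the open cone, Lemma~\ref{t.j.h} (resting on Corollary~\ref{t.j.f}, hence on Helton--Vinnikov, plus a reduction to products of two-variable linear forms) identifies the number of roots of $s\mapsto\ppie{\ell}{e}(x+sd)$ in $[-\sigma,\sigma]$ with $\mie{\ell}{e}(x+\sigma d)+\mie{\ell}{e}(x-\sigma d)$, and interlacing gives $\mie{\ell}{e}\ge\mie{j}{e}$ pointwise for $\ell\le j$; letting $\sigma\downarrow 0$ yields the multiplicity inequality, and the perturbation $x\mapsto x+\epsilon e$ with a double limit handles boundary $x$. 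This route never needs to know which principal minors of $D$ vanish -- it is immune to the cancellation among lowest-order terms that blocks your closed-form expression for $c_{k,\ell}$. To salvage your approach you would essentially have to reprove that monotonicity at the level of the analytic eigenvalue branches $\mu_i(s)$ of $X+sD$, controlling cancellation in $e_{n-\ell}(\mu(s))$, which is the same difficulty in different clothing.
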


The proof of the proposition makes use of the following lemma.
  
For $ e \in \Lambdapp $ and arbitrary $ z $, recall that $ M_e(z) $ denotes the number of non-negative roots of $ t \mapsto p(z+te) $.

\begin{lemma}  \label{t.j.h}
Assume $ e, x \in \Lambdapp $, let $ d \neq 0 $, and  let $ s_1 \leq s_2 \leq \cdots \leq s_k $ denote the positive roots (including multiplicities) of $ s \mapsto p(x+sd) $.  Then, for every $ \bar{s}   \geq 0 $, 
\[   M_e(x+\bar{s}  d) = \# \{ i: s_i \leq \bar{s}   \} \; . \]
\end{lemma}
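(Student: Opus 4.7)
My plan is to reduce the lemma, via Corollary~\ref{t.j.f}, to counting roots of a single univariate polynomial, and then to exploit homogeneity of $p$ to transport the count from that polynomial over to $\sigma \mapsto p(x+\sigma d)$. Since both $e, x \in \Lambdapp$, Corollary~\ref{t.j.f} gives $M_e(x + \bar{s} d) = M_x(x + \bar{s} d)$, so it suffices to count non-negative $t$-roots of
\[ \phi(t) \;:=\; p\bigl((1+t)x + \bar{s}d\bigr). \]
Because $p(x) > 0$, the leading coefficient of $\phi$ in $t$ is $p(x) > 0$, so $\deg \phi = n$; and since $x \in \Lambdapp$ is itself a hyperbolicity direction (by G\r{a}rding), all $n$ roots of $\phi$ are real.

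The case $\bar{s} = 0$ is immediate: $\phi(t) = (1+t)^n p(x)$ has its unique root at $t = -1$, matching the fact that the right-hand side is $0$ (all $s_i$ being positive). For $\bar{s} > 0$ I would substitute $u := 1+t$, so that $t \geq 0$ becomes $u \geq 1$; homogeneity of $p$ then yields the identity
\[ p(ux + \bar{s}d) \;=\; u^n \, p\bigl(x + (\bar{s}/u)\, d\bigr) \qquad (u \neq 0), \]
which furnishes a multiplicity-preserving bijection between nonzero roots $u_0$ of $u \mapsto p(ux + \bar{s}d)$ and nonzero roots $\sigma_0 = \bar{s}/u_0$ of $\sigma \mapsto p(x + \sigma d)$, under which $u \geq 1$ corresponds exactly to $0 < \sigma \leq \bar{s}$. (Multiplicity transfer: if $p(x + \sigma d) = (\sigma - \sigma_0)^\mu r(\sigma)$ with $r(\sigma_0) \neq 0$, then $u^n p(x + (\bar{s}/u)d) = u^{n-\mu}(\bar{s} - \sigma_0 u)^\mu r(\bar{s}/u)$ has a root of the same multiplicity $\mu$ at $u = \bar{s}/\sigma_0$.)

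The one place requiring care is the bookkeeping of multiplicities when $p(d) = 0$. In that event, $\sigma \mapsto p(x + \sigma d) = \sum_{k=0}^{m} c_k \sigma^k$ has some degree $m < n$, so the bijection above accounts for only $m$ of the $n$ roots of $\phi$. Writing
\[ p(ux + \bar{s}d) \;=\; \sum_{k=0}^{m} c_k \bar{s}^{\,k}\, u^{\,n-k} \]
shows the missing $n - m$ roots are concentrated at $u = 0$ (equivalently $t = -1$), which lies strictly below the threshold $u \geq 1$ and therefore does not contribute to $M_x(x + \bar{s}d)$. Moreover, $\sigma = 0$ is never a root of $\sigma \mapsto p(x + \sigma d)$ (for then $p(x) = 0$, contradicting $x \in \Lambdapp$), so no ``lost'' roots hide at the forbidden value $\sigma = 0$. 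The hardest part of the argument is precisely this accounting; once it is done, positive $s$-roots with $s_i \leq \bar{s}$ are in multiplicity-preserving bijection with non-negative $t$-roots of $\phi$, yielding $M_x(x + \bar{s}d) = \#\{i : s_i \leq \bar{s}\}$.
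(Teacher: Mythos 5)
Your proof is correct, and its skeleton coincides with the paper's: both first invoke Corollary~\ref{t.j.f} to replace $e$ by $x$, and both then run the change of variables $\sigma = \bar{s}/(1+t)$ relating roots of $\sigma \mapsto p(x+\sigma d)$ in $(0,\bar{s}]$ to non-negative roots of $t \mapsto p(x+\bar{s}d+tx)$. Where you genuinely differ is in the multiplicity bookkeeping. The paper restricts $p$ to the plane spanned by $x$ and $d$ and uses the fact that every bivariate hyperbolic polynomial of degree $n$ factors as $\prod_{j=1}^n a_j^Ty$ with real $a_j$; the root correspondence (with multiplicity) then holds factor-by-factor, and the degree-drop factors with $a_j^Td=0$ take care of themselves automatically. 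You instead work directly from homogeneity, $p(ux+\bar{s}d)=u^n\,p(x+(\bar{s}/u)d)$, verifying multiplicity preservation at nonzero roots by hand and showing that the $n-m$ ``missing'' roots, when $\deg_\sigma p(x+\sigma d)=m<n$, all sit at $u=0$, i.e.\ $t=-1<0$, hence outside the counted range. Your version is more self-contained --- it needs no structure theory of bivariate hyperbolic polynomials, only homogeneity and the reality of the roots of $\phi$ (which you correctly get from G\r{a}rding's theorem applied to the direction $x$) --- at the price of the explicit multiplicity computation. One small point of rigor there: $r(\bar{s}/u)$ is a rational, not polynomial, function of $u$, so to read off the order of vanishing at $u_0\neq 0$ one should first clear the denominator by writing $r(\bar{s}/u)=u^{-\deg r}\,\tilde{r}(u)$ with $\tilde{r}(u_0)\neq 0$; since $u_0\neq 0$ this changes nothing, and the conclusion stands.
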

\begin{proof}  Because $ M_e(x+\bar{s} d) $ is independent of $ e \in \Lambdapp $ (by Corollary~\ref{t.j.f}), in proving the lemma we may assume $ e = x $, in which case we need only consider the hyperbolic polynomial obtained by restricting $ p $ to the subspace spanned by $ x $ and $ d $.  However, every hyperbolic polynomial in two variables of degree $ n $  is of the form $ (y_1,y_2) \mapsto \prod_{j=1}^n a_j^T y $ for some vectors $ a_j \in \reals^{2} $ (this easily follows from two facts: (i) every complex homogeneous polynomial in two variables of degree $ n $ is of the form $ (y_1,y_2) \mapsto \prod_{j=1}^n (a_{j,1}y_1 + a_{j,1}y_2) $ for some $ a_j \in \mathbb{C}^2 $; (ii) if $ a \in \mathbb{C}^2  $ is not a (complex) multiple of a real vector, then $ \{ (y_1,y_2) \in \mathbb{R}^2 : a_1y_1 + a_2y_2 = 0 \} $ contains only the origin).   

Thus, we need only consider hyperbolic polynomials $ p(y_1, y_2) = \prod_{j=1}^n a_j^T y $ -- where $ a_j \in \mathbb{R}^2 $ -- and $ x = (x_1, x_2) $ which for all $ j $ satisfies $ a_j^T x \neq  0 $. For $ (d_1, d_2) \neq (0,0) $, our goal is to show for $ \bar{s}   \geq 0 $ that the number of roots $ 0 < \hat{s}  \leq \bar{s}   $ (counting multiplicities) for the univariate polynomial 
\begin{equation}  \label{e.j.m}
  s \mapsto \prod_{j=1}^n a_j^T (x + sd) 
\end{equation}
  is the same as the number of roots $ \hat{t} \geq 0 $  (counting multiplicities) for 
\begin{equation}  \label{e.j.n}
  t \mapsto \prod_{j=1}^n a_j^T (x + \bar{s}d + t x ) \; . 
\end{equation}
   Of course, however, $ \hat{s} > 0  $ is a root of (\ref{e.j.m})  if and only if $ \hat{s}  = \bar{s}/(1+\hat{t} ) $  for some root $ \hat{t} \geq 0  $ of (\ref{e.j.n}).  The lemma follows.
\end{proof}

\noindent {\bf  {\em Proof of Proposition~\ref{t.j.g}}} Assume $ e \in \Lambdapp $.  For arbitrary $ z $ and $ 0 \leq \ell \leq n - 1 $, let  $ \mie{\ell}{e}(z) $ denote the number of non-negative roots (counting multiplicities) for $ t \mapsto \ppie{\ell}{e}(x+te) $.  Additionally, for a vector $ d $ and value $ s \geq  0 $, let $ \nie{\ell}{e}(x,d,s) $ denote the number of roots (counting multiplicities) in the closed interval $ [-s, s] $ for the univariate polynomial $ \bar{s}  \mapsto \ppie{\ell}{e}(x+ \bar{s} d) $; if the univariate polynomial is identically zero, let $ \nie{\ell}{e}(x,d,s) =  \infty $.

If $ x \in \Lambdapp $ (hence $ x \in \Lambdaiepp{\ell}{e} $ for all $ 0 \leq \ell \leq n-1 $), $ d \neq 0 $ and $ s \geq 0 $, Lemma~\ref{t.j.h}  can be applied with $ \ppie{\ell}{e} $ in place of $ p $, and applied with $ -d $ as well as with $ d $, yielding
\begin{equation}  \label{e.j.o}
      \nie{\ell}{e}(x,d,s) =  \mie{\ell}{e}(x+sd) + \mie{\ell}{e}(x-sd) \; . 
\end{equation} 
On the other hand, for any $ z $, the interlacing of the roots of $ t \mapsto \ppie{\ell}{e}(z+te) $ and its derivative $ t \mapsto \ppie{\ell +1}{e}(z+te) $ gives $ \mie{\ell}{e}(z) \geq \mie{\ell +1}{e}(z) $, and thus,  
\begin{equation}  \label{e.j.p}
   \ell \leq j \quad \Rightarrow \quad  \mie{\ell}{e}(z) \geq \mie{j}{e}(z) \; . 
\end{equation}
From (\ref{e.j.o})  and (\ref{e.j.p})  follows
\begin{equation}  \label{e.j.q}
   \left( x \in \Lambdapp \right) \wedge \left( \ell \leq  j \right)  \quad \Rightarrow \quad  \nie{\ell}{e}(x,d,s) \geq \nie{j}{e}(x,d,s) \; .  
\end{equation}
 
Assume, now, that $ x $, $ d $, $ j $ and $ k(j) $ satisfy the hypothesis of the proposition.  Observe that definitions readily give $ k(j) = \nie{j}{e}(x,d,0) $.  Moreover, proving the proposition amounts precisely to showing 
\begin{equation}  \label{e.j.r}
      \nie{\ell}{e}(x,d,0) \geq \nie{j}{e}(x,d,0) \quad \textrm{for $ \ell = 0, \ldots, j $} \; . 
\end{equation}
  
For $ \epsilon > 0 $, let $ x( \epsilon ) := x + \epsilon e $, a point in $   \Lambdapp $.   Each polynomial $ \bar{s}  \mapsto \ppie{\ell}{e}( x( \epsilon)+ \bar{s}d)  $ ($ 0 \leq \ell \leq n-1 $) is not identically zero and has only real roots (indeed, by homogeneity, the roots are the reciprocals of the non-zero roots for $ t \mapsto \ppie{\ell}{e}(d + tx( \epsilon )) $).  Thus, since
\begin{equation}  \label{e.j.s}
\left. \begin{array}{c}
  \textrm{bounded roots of a univariate polynomial} \\ \textrm{vary continuously in the coefficients}\\
  \textrm{(so long as the polynomial does not become identically zero)}  \end{array} \quad \right\}
\end{equation}
we have for $ 0 \leq \ell \leq j $ that either $ \nie{\ell}{e}(x,d,0) = \infty $ (i.e., $ \bar{s}  \mapsto \ppie{\ell}{e}(x + \bar{s} d) \equiv 0 $), or
\begin{align*}
     \nie{\ell}{e}(x,d,0) & =  \lim_{ s \downarrow 0} \left(  \lim_{ \epsilon \downarrow 0} \left(  \nie{\ell}{e}(x( \epsilon),d,s) \right)  \right)  \\
       & \geq  \lim_{ s \downarrow 0} \left(  \lim_{ \epsilon \downarrow 0} \left(  \nie{j}{e}(x( \epsilon),d,s) \right) \right) \quad \textrm{(by (\ref{e.j.q}))} \\
     &  \geq  \lim_{ s \downarrow 0} \left(    \nie{j}{e}(x,d,s/2) \right)  \quad \textrm{(again using (\ref{e.j.s}))} \\
     & =  \nie{j}{e}(x,d,0) \; , 
\end{align*}
thereby establishing (\ref{e.j.r})  and hence completing the proof.  \hfill $ \Box $


\begin{thebibliography}{99}


\bibitem{bgls}
H. H. Bauschke, O. G\"{u}ler, A. S.  Lewis and H. S. Sendov, Hyperbolic polynomials and convex analysis, {\it Canadian Journal of Mathematics}  {\bf 53} (2001) no.~3, 470--488.  

\bibitem{bb}
J. Borcea and P. Br\"{a}nd\'{e}n, Multivariate P\'{o}lya-Schur classification problems in the Weyl algebra, {\em Proceedings of the London Mathematical Society} {\bf 101} (2010) no.~1, 73-104.

\bibitem{branden}
P. Br\"{a}nd\'{e}n, Obstructions to determinantal representability, {\em Advances in Mathematics} {\bf 226} (2011) no.~2, 1202-1212.  

\bibitem{chua}
C.B. Chua, The primal-dual second-order cone approximations algorithm for symmetric cone programming, {\it Foundations of Computational Mathematics} {\bf 7} (2007), no.~3, 271-302. 

\bibitem{fm}
A.V. Fiacco and G.P. McCormick, {\it Nonlinear Programming: Sequential Unconstrained Minimization Techniques}, SIAM, Philadelphia, PA, 1990.


\bibitem{garding}
 L. G\aa rding, 
 An inequality for hyperbolic polynomials, 
 {\it Journal of Mathematics and Mechanics}   {\bf 8} (1959) no.~6, 957--965.

\bibitem{guler}
O. G\"uler, Hyperbolic polynomials and interior point methods for convex programming, {\it Mathematics of Operations Research}  {\bf 22} (1997), no.~2, 350--377. 

\bibitem{gurvits} L. Gurvits, Van der Waerden/Schrijver-Valiant like conjectures and stable (aka hyperbolic)
homogeneous polynomials: one theorem for all, {\it Electronic Journal of Combinatorics} {\bf 15}  (2008)
\#R66.

\bibitem{hl}
F.R. Harvey and H.B. Lawson Jr., Hyperbolic polynomials and the Dirichlet problem, preprint available at arXiv.org.

\bibitem{hv}
J. Helton and V. Vinnikov, Linear matrix inequality representations of sets, {\em Communications of Pure and Applied Mathematics} 60 (2007) 654-674.

\bibitem{lpr}
A.S. Lewis,
P.A. Parrilo,
M.V. Ramana, The Lax conjecture is true, {\it Proceedings of the American Mathematical Society}  {\bf 133} (2005) no.~9, 2495--2499.

\bibitem{nn}  Y. Nesterov\ and\ A. Nemirovski, {\it Interior-point polynomial algorithms in convex programming}, SIAM, Philadelphia, PA, 1994.


\bibitem{renegar}
J. Renegar, Hyperbolic programs, and their derivative relaxations, {\em Foundations of Computational Mathematics}  {\bf 6}  (2006) no.~1, 59-79.

\bibitem{zinchenko1}
Y. Zinchenko, {\it The local behavior of the Shrink-Wrapping algorithm for linear programming},
Ph.D. thesis, Cornell University, 2005.

\bibitem{zinchenko2}
Y. Zinchenko, On hyperbolicity cones associated with elementary symmetric polynomials,
{\it Optimization Letters} {\bf 2} (2008) no.~3, 389-402.

\bibitem{zinchenko3}
Y. Zinchenko, Shrink-wrapping trajectories for linear programming, preprint available at optimization-online.org.  
 \end{thebibliography}
\end{document}